\theoremstyle{definition}
\newtheorem{pf}{{\it Proof}}
\theoremstyle{definition}
\theoremstyle{plain}
\newtheorem{theo}{Theorem}
\theoremstyle{plain}
\newtheorem{theor}{Theorem}
\theoremstyle{plain}
\theoremstyle{plain}
\newtheorem{propp}{Proposition}[section]
\theoremstyle{plain}
\newtheorem{thm}{Theorem}[subsection]
\theoremstyle{definition}
\newtheorem{ex}[thm]{Example}
\theoremstyle{definition}
\theoremstyle{definition}
\theoremstyle{definition}
\newtheorem{defin}[thm]{Definition}
\theoremstyle{definition}
\newtheorem{rem}[thm]{Remark}
\theoremstyle{plain}
\newtheorem{prop}[thm]{Proposition}
\theoremstyle{plain}
\newtheorem{lem}[thm]{Lemma}
\theoremstyle{plain}
\newtheorem{cor}[thm]{Corollary}
\theoremstyle{definition}
\theoremstyle{definition}
\theoremstyle{definition}
\theoremstyle{definition}
\theoremstyle{definition}
\newtheorem{prob}[thm]{Problem}
\numberwithin{equation}{subsection}
\def\qed{\hfill $\Box$}
\def\F{\mathbb{F}}
\def\deg{{\rm deg}}
\def\dim{{\rm dim}}
\def\mod{{\rm mod}}
\def\Im{{\rm Im}}
\def\Ker{{\rm Ker}}
\newcommand{\bbP}{{\operatorname{\bf P}}}
\newcommand{\Spec}{\operatorname{Spec}}
\newcommand{\Proj}{\operatorname{Proj}}
\newcommand{\Gal}{\operatorname{Gal}}
\newcommand{\GL}{\operatorname{GL}}
\newcommand{\SO}{\operatorname{SO}}
\newcommand{\gA}{\operatorname{A}}
\newcommand{\gB}{\operatorname{B}}
\newcommand{\gC}{\operatorname{C}}
\newcommand{\gH}{\operatorname{H}}
\newcommand{\gO}{\operatorname{O}}
\newcommand{\gT}{\operatorname{T}}
\newcommand{\gU}{\operatorname{U}}
\newcommand{\gV}{\operatorname{V}}
\newcommand{\gW}{\operatorname{W}}
\newcommand{\Aut}{\operatorname{Aut}}
\newcommand{\diag}{\operatorname{diag}}
\newcommand{\modulo}{\operatorname{mod}}
\newcommand{\fS}{{\mathfrak S}}
\newcommand{\Jac}{\operatorname{Jac}}
\def\@seccntformat#1{\csname the#1\endcsname. }
\renewcommand\section{\@startsection {section}{1}{\z@}%
 {-3.5ex \@plus -1ex \@minus -.2ex}%
 {2.3ex \@plus.2ex}%
 {\normalfont\large\bfseries}}
\begin{document}

\title
{\bf Superspecial curves of genus $4$ in small characteristic}
\author
{Momonari Kudo\thanks{Graduate School of Mathematics, Kyushu University.
E-mail: \texttt{m-kudo@math.kyushu-u.ac.jp}}
\ and Shushi Harashita\thanks{Graduate School of Environment and Information Sciences, Yokohama National University.
E-mail: \texttt{harasita@ynu.ac.jp}}}
\maketitle
\begin{abstract}
This paper contains a complete study of
superspecial curves of genus $4$ in characteristic $p\le 7$.
We prove that there does not exist a superspecial curve of genus $4$ in characteristic $7$.
This is a negative answer to the genus $4$ case of
the problem proposed by Ekedahl \cite{Ekedahl} in 1987.
This implies the non-existence of maximal curve of genus $4$ over $\F_{49}$,
which updates the table at {\tt manypoints.org}.
We give an algorithm to enumerate superspecial nonhyperelliptic curves
in arbitrary $p \ge 5$, and for $p\le 7$
we excute it with our implementation on a computer algebra system Magma.
Our result in $p=5$ re-proves
the uniqueness of maximal curves of genus $4$ over $\F_{25}$,
see \cite{FGT} for the original theoretical proof.

In Appendix, we present a general method determining Hasse-Witt matrices of curves which are complete intersections.


\end{abstract}

\section{Introduction}
Let $p$ be a rational prime.
Let $K$ be a perfect field of characteristic $p$.
Let $\overline K$ denote the algebraic closure of $K$.
By a curve, we mean a non-singular projective variety of dimension $1$.
A curve over $K$ is called {\it superspecial} if its Jacobian
is isomorphic to a product of supersingular elliptic curves over $\overline K$.
It is known that $C$ is superspecial if and only if the Frobenius on
$H^1(C,{\mathcal O}_C)$ is zero.

This paper concerns the enumeration of superspecial curves of genus $g=4$.
Our interest in the case of $g=4$ comes from the fact that
for $g\ge 4$ the dimension of the moduli space of curves of genus $g$ is strictly less than that of the moduli space of principally polarized abelian varieties of dimension $g$. This fact means that
the theory on abelian varieties is not so effective for
our purpose for $g\ge 4$.
In \cite{Ekedahl}, Theorem 1.1, Ekedahl proved that
if there exists a superspecial curve $C$ of genus $g$ in characteristic $p$,
then $2g \le p^2-p$, and $2g\le p-1$ if $C$ is hyperelliptic
and $(g,p)\ne (1,2)$.
In particular there is no superspecial curve
of genus $4$ in characteristic $\le 3$
and there is no superspecial hyperelliptic curve of genus $4$
in characteristic $\le 7$.
Hence in this paper we restrict ourselves to
the nonhyperelliptic case in $p\ge 5$.

Our main results in this paper are the following.

\begin{theo}\label{MainTheorem_intro}
Any superspecial curve of genus $4$ over $\F_{25}$ is $\F_{25}$-isomorphic to
\[
2yw + z^2=0,\qquad x^3 + a_1 y^3 + a_2 w^3 + a_3 zw^2=0
\]
in $\bbP^3$, where $a_1, a_2 \in \F_{25}^\times$ and $a_3\in \F_{25}$.
\end{theo}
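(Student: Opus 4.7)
The plan is to combine the canonical model of a non-hyperelliptic genus-$4$ curve with an explicit Hasse--Witt computation. Since $p = 5 \ge 5$ and Ekedahl's bound rules out hyperelliptic superspecial curves of genus $4$ in characteristic $5$, the canonical embedding realizes $C \subset \bbP^3$ as the complete intersection $Q \cap S$, where $Q$ is a (unique) irreducible quadric surface and $S$ is an irreducible cubic surface. A superspecial curve has $\F_{p^2}$-maximal Jacobian and so admits a model over $\F_{25}$, so the entire classification can be carried out over $\F_{25}$.

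The first step is to split according to $\operatorname{rank}(Q) \in \{3,4\}$. Using the action of $\PGL_4(\F_{25})$ I would put $Q$ into a standard form: a smooth quadric in the rank-$4$ case and the cone $Q_0 : 2yw + z^2 = 0$ in the rank-$3$ case. The stabilizer of $Q$ inside $\PGL_4$ then acts on the remaining datum, namely the cubic $S$ viewed as an element of the $16$-dimensional quotient $H^0(\bbP^3, \mathcal{O}(3)) / Q \cdot H^0(\bbP^3, \mathcal{O}(1))$. Fix an explicit monomial basis of this quotient so that $S$ is encoded by $16$ unknown coefficients $a_{ijkl}$.

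Next, invoking the Hasse--Witt recipe developed in the Appendix, I would write the Frobenius on $H^1(C, \mathcal{O}_C)$ as an explicit $4 \times 4$ matrix $H$ whose entries are polynomials in the $a_{ijkl}$, extracted from the reduction of $S^{p-1}$ modulo $Q$ against a fixed basis of holomorphic differentials. The superspecial locus is cut out by $H = 0$, a system in $16$ unknowns which I would solve in Magma, case by case. For the rank-$4$ case I expect the output to be empty over $\overline{\F_5}$, ruling out smooth-quadric superspecial curves in characteristic $5$. For the rank-$3$ case the solution set in the $16$-dimensional coefficient space, after being quotiented by the action of the stabilizer of $Q_0$ in $\PGL_4$, should collapse precisely onto the three-parameter family $\{x^3 + a_1 y^3 + a_2 w^3 + a_3 z w^2 : a_1, a_2 \in \F_{25}^\times,\ a_3 \in \F_{25}\}$.

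The main obstacle is keeping the symbolic computation tractable: with $16$ free coefficients, $S$ must be raised to the fourth power, reduced modulo $Q$, and the appropriate coefficients extracted, producing a polynomial ideal in $16$ variables whose components then have to be identified modulo the sizable stabilizer action. A secondary difficulty is verifying the nondegeneracy conditions $a_1, a_2 \ne 0$ via smoothness and irreducibility of $C$, and checking that distinct triples $(a_1, a_2, a_3) \in (\F_{25}^\times)^2 \times \F_{25}$ really give $\F_{25}$-inequivalent curves, which requires a residual analysis of the stabilizer action on the normalized cubic.
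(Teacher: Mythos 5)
Your overall skeleton (non-hyperelliptic canonical model, split by $\operatorname{rank}(Q)$, Hasse--Witt criterion, Magma search) matches the paper's. But there are two genuine gaps.

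The first and main one is that you have no concrete plan for reducing the $16$-coefficient cubic to something tractable; you flag this as ``the main obstacle'' and then leave it unresolved, whereas the paper's proof lives or dies on exactly this step. You cannot in practice feed a $16$-variable polynomial system (with entries of high degree after raising to the $(p-1)$st power) into a Gr\"obner-basis solver and hope for an answer. The paper's resolution is to use the stabilizer of $Q$ in $\widetilde{\gO}_\varphi$ \emph{before} writing down the Hasse--Witt equations, not after: Lemmas \ref{ReductionLemmaN1}, \ref{ReductionLemmaN2}, \ref{ReductionLemmaDegenerate} put $P$ into a normal form with only $8$--$12$ independent coefficients, of which only $6$--$8$ are then treated as indeterminates (the rest being run over $\F_q$, i.e.\ the hybrid method). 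Moreover, this reduction for the non-degenerate cases is only valid under the assumptions {\rm (A1)}/{\rm (A2)} on the existence of suitable $K$-rational points, and the way the paper satisfies them is crucial: by Lemma \ref{ExistenceMaximalCurve}, a superspecial curve descends to a \emph{maximal} curve over $\F_{p^2}$, which has $66$ points when $q=25$, comfortably above the thresholds in Remarks \ref{Remark A1} and \ref{Remark A2}. Your proposal mentions descent to $\F_{25}$ but never uses the surplus of rational points, so you have no way to run the reduction.

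The second issue is your description of the Hasse--Witt recipe: you say you would extract entries ``from the reduction of $S^{p-1}$ modulo $Q$'', but that is not the formula the paper derives. Proposition \ref{prop:HW_general} and Corollary \ref{prop:HW} show that the Hasse--Witt matrix of $V(P,Q)$ is read off from the coefficients of $(PQ)^{p-1}$ at sixteen specific monomials, with no reduction modulo $Q$ at all; the $Q$-factor is built into the product because the graded resolution of $S/\langle P,Q\rangle$ is the Koszul complex on $(P,Q)$, not just on $P$. Your version would need its own derivation and, as written, is not obviously equivalent. Beyond that, your worry in the last paragraph about distinguishing different $(a_1,a_2,a_3)$ up to $\F_{25}$-isomorphism is harmless here: the theorem is only a surjectivity statement (``every superspecial curve is isomorphic to \emph{some} member of the family''), and the finer classification into $21$ classes is done separately in Section \ref{section:6}.
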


By Theorem \ref{MainTheorem_intro}, we can give another proof of the uniqueness of maximal curves over $\mathbb{F}_{25}$ 
(Corollary \ref{MainCorollary} and Example \ref{CompleteRepresentativesChar5}), see \cite{FGT} for the original theoretical proof.

\begin{theo}\label{MainTheorem2_intro}
There is no superspecial curve of genus $4$ in characteristic $7$.
\end{theo}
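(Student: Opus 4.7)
The plan is to reduce to the canonical model and run (a refined version of) the algorithm developed in the paper for the $p=5$ case, now in characteristic $7$.

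By Ekedahl's bound cited above, a superspecial curve of genus $4$ in characteristic $7$ must be nonhyperelliptic; hence it sits, via its canonical embedding, in $\bbP^3$ as a complete intersection $C = V(Q,P)$ of a quadric $Q$ and a cubic $P$. First I would split into the two standard cases according to the rank of $Q$: the nonsingular case (rank $4$, which after a linear change of coordinates over $\overline{\F_7}$ we may normalize to $Q = 2yw + z^2 - x^2$ or an equivalent convenient shape) and the rank-$3$ case $Q = 2yw + z^2$. In each case I would describe the stabilizer $\Stab(Q) \subset \PGL_4$ and use it, together with reduction of $P$ modulo the ideal $(Q)$, to cut the parameter space of cubics down to a finite-dimensional moduli of normal forms for $P$.

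Next, for each normal form, I would compute the Hasse--Witt matrix $H$ of $C = V(Q,P)$ using the method for complete intersections presented in the Appendix. The curve is superspecial iff $H = 0$ on $\overline{\F_7}$, so I obtain an explicit system of polynomial equations in the coefficients of $P$, namely the $16$ entries of the $4\times 4$ matrix $H$ (with $g=4$). I would then impose the smoothness condition for $C$ (a nonvanishing condition on the Jacobian ideal of $(Q,P)$) and try to prove that the resulting constructible set is empty over $\overline{\F_7}$. Concretely this means computing a Gr\"obner basis of the Hasse--Witt ideal in the coordinate ring of the moduli and checking that every point of the zero locus lies in the singular locus of $C$, so can be discarded.

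The main obstacle, as with the $p=5$ analysis, is computational rather than conceptual: the entries of $H$ are polynomials of degree growing with $p$ (they come from expanding $(P Q)^{p-1}$ or a similar Cartier-operator expression), so the system in characteristic $7$ is substantially heavier than the characteristic $5$ system producing Theorem~\ref{MainTheorem_intro}. To keep the Gr\"obner basis computations tractable I would (i) fully exploit the stabilizer action to kill as many cubic coefficients as possible before computing $H$, (ii) treat the rank-$4$ and rank-$3$ cases separately so that the stabilizers are larger, and (iii) whenever a branch produces solutions, verify by direct substitution that the corresponding $V(Q,P)$ is singular. If every branch in every case yields either no solution or only singular loci, the non-existence follows.

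A side check I would carry out is to confirm compatibility with the update of the \texttt{manypoints.org} table: the non-existence of a superspecial genus $4$ curve in characteristic $7$ immediately rules out maximal curves of genus $4$ over $\F_{49}$, so any computer output should be cross-checked against the known upper bounds on $\#C(\F_{49})$ for smooth genus $4$ curves to catch implementation errors.
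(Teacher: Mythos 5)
Your plan shares the paper's outline (canonical embedding in $\bbP^3$, normalize $Q$, reduce the cubic via the stabilizer of $Q$, Hasse--Witt criterion, Gr\"obner-based singularity check), but it omits the step that both justifies the reduction of the cubic and makes the computation finite: the descent to $\F_{49}$. The paper's proof begins by invoking Lemma~\ref{ExistenceMaximalCurve}, which says that for $g = 4 > (p^2+1)/(2p)$ with $p = 7$, any superspecial genus-$4$ curve over $\overline{\F_7}$ is the base change of a \emph{maximal} curve over $\F_{49}$; so it suffices to rule those out. Such a maximal curve has $106$ rational points, which is exactly what verifies hypotheses (A1), (A2) of the reduction lemmas in Section~\ref{SectionReduction} (the existence of an $\F_{49}$-point where certain partial derivatives of $P$ do not vanish). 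More importantly, it turns the problem into a finite search over $\F_{49}^N$: the paper loops over a few of the normalized coefficients, leaves only $6$--$8$ of them as Gr\"obner-basis unknowns so that \texttt{Variety} returns a finite set of $\F_{49}$-solutions, and runs the non-singularity test only on those. That hybrid split, tuned heuristically in each of the eight sub-cases, is what made the $q=49$ computations finish in hours to days.

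You instead propose to keep all the remaining cubic coefficients as indeterminates over $\overline{\F_7}$ and run a single Nullstellensatz-style computation: show that the Hasse--Witt ideal together with the Jacobian minors generates the unit ideal (or that each coordinate lies in its radical). That would prove the same statement, but it is a different and much heavier task: each of the $16$ Hasse--Witt generators is a degree-$6$ polynomial in the roughly ten remaining coefficient variables, and you would have to certify a global radical-membership statement rather than solve $0$-dimensional specializations, with no obvious way to apply the exhaustive-search/Gr\"obner trade-off the paper leans on. Without the maximal-curve descent, or some other device that caps the search space, the Gr\"obner step in your proposal is very unlikely to be tractable; you should treat Lemma~\ref{ExistenceMaximalCurve} as the missing key lemma, not a side remark.
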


Theorem \ref{MainTheorem2_intro} gives a negative answer to the genus $4$ case of
the problem proposed by Ekedahl  in 1987, see p.\ 173 of \cite{Ekedahl}.
Also this implies the non-existence of maximal curve of genus $4$ over $\F_{49}$,
which updated the table at {\tt manypoints.org}.
The site updates the upper and lower bounds of
$N_q(g)$ the maximal number of rational points on curves of genus $g$
over $\F_q$, after the paper \cite{GV} of van der Geer and van der Vlugt was published.
See Section 5.1 for the details of our contribution to the value of $N_{49} (4)$.
The authors learn much about this from E.\ W.\ Howe.

Here, we briefly describe our strategy to prove Theorems \ref{MainTheorem_intro} and \ref{MainTheorem2_intro}.
First we give a criterion for the superspeciality
of curves defined by two equations,
which is reminiscent of Yui's result \cite{Yui} on
hyperelliptic curves.
We regard a quadric as a (possibly degenerate) quadratic form $Q$.
Considering transformations by elements of the orthogonal group associated to $Q$,
we reduce parameters of cubic forms as much as possible.
After that, we enumerate cubic forms $P$ such that $V ( P, Q)$ are superspecial.
Our concrete algorithm of the enumeration is as follows.
We regard coefficients of $P$ as variables, and construct a multivariate system by our criterion for the superspeciality. 
We solve the system with the hybrid method \cite{BFP}.
In this paper, we are only interested in the solutions whose entries are lying in the ground field.
For each solution, we evaluate it to coefficients of $P$, and decide whether $V( P, Q)$ is non-singular or not with the Gr\"{o}bner basis computation.

We implemented\footnote{We implemented the algorithm on Magma V2.20-10 \cite{Magma}. For the source codes and log files, see the web page of the first author \cite{HPkudo}.} our algorithm over a computer algebra system Magma \cite{Magma}.
Note that our algorithm utilizes the Gr\"{o}bner basis computation to solve a system of algebraic equations derived from our criterion.
More concretely, our implementation adopts Magma's function \texttt{Variety}, which can compute all solutions of a zero-dimensional multivariate system over the ground field.
 
We here describe reasons why our computation has terminated in practical time.
As we mentioned above, we solve a system of algebraic equations (SA) in our enumeration algorithm. 
It is known that the number of indeterminants deeply affects the complexity of solving SA (see, e.g., \cite{Ayad}).
Thus, reducing indeterminants is quite important for our computation to terminate in practical time.

Another reason is to solve SA with optimal term ordering of indeterminants (in the graded reversible lexicographical order) and the hybrid technique proposed in \cite{BFP}.
The cost of solving SA with the Gr\"{o}bner basis computation depends on the number of solutions and term ordering.
Moreover, it is known that there is a tradeoff between an exhaustive search and Gr\"{o}bner bases techniques, see \cite[Section 3]{BFP}.
Hence considering such a tradeoff and deciding how many and which coefficients are regarded as indeterminants are important factors for our computation to terminate in practical time avoiding the out of memory errors.
From this, we heuristically decide how many and which coefficients are optimal to be regarded as indeterminants from experimental computations.
Consequently we have succeeded in finishing all the computation to guarantee the main theorems with a standard note PC.

This paper is organized as follows.
In Section \ref{section:2}, we collect some known facts on
curves of genus 4 and so on.
In Section \ref{sec:algorithm}, we give a method to obtain the Hasse-Witt matrix of a curve defined by two equations.
Section \ref{SectionReduction} is devoted to reduction of cubic forms by elements of the orthogonal group associated to $Q$.
In Section \ref{sec:main_results}, we prove our main theorems.
Our main theorems are reduced to some computational problems in Section \ref{subsec:sscurves}.
In order to solve the problems, we give in Section \ref{subsec:algorithm} an algorithm to enumerate superspecial curves based on the method in Section \ref{sec:algorithm}.
Then we solve in Section \ref{subsec:comp_result} the problems by executing our enumeration algorithm. 
We also show in Section \ref{subsec:imple} the timing data and sample codes of our computation on Magma.
In Section \ref{section:6}, we determine the automorphism group of the superspecial curve of genus $4$ in characteristic $5$ and enumerate
the isomorphism classes of superspecial curves over $\F_{25}$.
In Appendix A, we review a basic method to solve the radical membership problem for polynomial rings via the Gr\"{o}bner basis computation.
In Appendix B, we give a method to compute the Hasse-Witt matrix of a curve defined as a complete intersection via Koszul complex.
This is a generalization of the genus $4$ case method given in Section 3.1.

\subsection*{Acknowledgments}
The first author thanks Kazuhiro Yokoyama for helpful advice on the Gr\"{o}bner basis computation, proofs with computer algebra systems and their correctnesses.
The second author would like to thank Keiichi Gunji for his comments to the preliminary version of this paper.
The authors are very grateful to
Gerard van der Geer and Everett W. Howe,
who gave us helpful comments and suggestions
after we uploaded the first version of this paper to arXiv.
This work was supported by
JSPS Grant-in-Aid for Young Scientists (B) 21740006.

\section{Preliminaries}\label{section:2}
We review some facts on curves of genus $4$.

\subsection{Nonhyperelliptic curves of genus $4$}\label{CurvesGenus4}
Let $p$ be a rational prime.
Let $K$ be a perfect field of characteristic $p$.
Let $C$ be a nonhyperelliptic curve of genus $4$ over $K$.
The canonical divisor defines an embedding $C$ into $\bbP^3=\Proj(K[x,y,z,w])$.
It is known that $C$ is defined by an irreducible quadratic form $Q$ and
an irreducible cubic form $P$ in $x,y,z,w$ (cf. \cite{Har}, Chapter IV, Example 5.2.2).

We claim that it can be assumed that any coefficient of $Q$ and $P$
belongs to $K$. Let $\overline K$ be the algebraic closure of $K$.
Let $\Gamma_K$ denote the Galois group $\Gal(\overline K/K)$.
Since up to constant multiplication $Q$ is uniquely determined by $C$ (cf. \cite{Har}, Chapter IV, Example 5.2.2),
for any $\sigma\in\Gamma_K$ there is
$\lambda_\sigma\in \overline{K}^\times$ such that $Q^\sigma = \lambda_\sigma Q$. The map sending $\sigma$ to $\lambda_\sigma$ defines an element of
the Galois cohomology group
$H^1(\Gamma_K, \overline K^\times)$.
As this is trivial (Hilbert's theorem 90),
there is $a\in \overline K^\times$
such that $\lambda_\sigma = a^\sigma a^{-1}$ for all $\sigma\in \Gamma_K$.
Replacing $Q$ by $a^{-1} Q$,
we have $Q^\sigma = Q$ for all $\sigma\in \Gamma_K$.
From now on we assume that any coefficient of $Q$ belongs to $K$.

Choose $P$ such that $P$ and $Q$ determine $C$.
Since $C$ is defined over $K$, for any $\sigma\in\Gamma_K$ we have
\begin{equation}\label{P and its twist}
P^\sigma \equiv \lambda P \quad \mod \quad Q
\end{equation}
for some $\lambda\in \overline K^\times$.
(Otherwise $C$ is the intersection of $Q$, $P$, $P^\sigma$
with essentially different $P$ and $P^\sigma$. This is absurd.)

Let $m$ be the leading monomial of $Q$ with respect to a certain term order.
Let $V$ be 
the space 
of cubic forms over $K$ without terms containing $m$. Note $\dim_K V= 16$.
Taking modulo $Q$,
it suffices to consider $P$ belonging to 
$V_{\overline K}:=V\otimes_K{\overline K}$.
%
Then we have $P^\sigma \in V_{\overline K}$.
It follows from (\ref{P and its twist}) that $P^\sigma = \mu_\sigma P$
for some $\mu_\sigma \in \overline K^\times$.
By the same argument as in the second paragraph, we see
that $P$ belongs to $V$ after multiplying $P$ by a constant.

\begin{rem}\label{QuadFormOverFiniteField}
Note that a quadratic form $Q$ defining a curve $C$ of genus $4$
is possibly degenerate.
But its rank is greater than or equal to $3$,
since otherwise $Q$ is reducible.
If $K$ is a finite field of characteristic $p\ne 2$,
then the quadratic forms over $K$ are classified by their ranks and discriminants,
whence an element of $\GL_4(K)$ transforms $Q$
into $2xw+2yz$ or $2xw+y^2-\epsilon z^2$ with $\epsilon\not\in (K^\times)^2$
when $Q$ is non-degenerate,
resp. into $2yw - \epsilon z^2$ for a representative $\epsilon$ of $K^\times/(K^\times)^2$ when $Q$ is degenerate.
\end{rem}

\subsection{Superspecial curves and maximal curves}
It is known that maximal curves over $\F_{p^2}$
are superspecial.
For small $p$ a sort of the converse holds:
\begin{lem}\label{ExistenceMaximalCurve}
Assume that there exists a superspecial curve $C$ of genus $g$
over an algebraically closed field $k$ in characteristic $p$
with $g > \frac{p^2+1}{2p}$. 
Then there exists a maximal curve $X$ of genus $g$ over $\F_{p^2}$
such that $X_k:=X\times_{\Spec(\F_{p^2})} \Spec(k)$ is isomorphic to $C$.
\end{lem}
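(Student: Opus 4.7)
The plan is to descend the principally polarized Jacobian of $C$ to an abelian variety over $\F_{p^2}$ whose $\F_{p^2}$-Frobenius acts as multiplication by $-p$, and then use Torelli's theorem to descend $C$ itself. A curve $X/\F_{p^2}$ is maximal iff every eigenvalue of $\F_{p^2}$-Frobenius on $H^1_\ell(X)$ equals $-p$, so producing such a model of the Jacobian and recovering $X$ from it gives the conclusion.

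First, since $C$ is superspecial, $\Jac(C) \cong E_0^g$ over $k$ for some supersingular elliptic curve $E_0$. By Deuring, there is a maximal supersingular elliptic curve $E_1/\F_{p^2}$ whose $\F_{p^2}$-Frobenius endomorphism is $-p$; since $E_1 \otimes k \cong E_0$, the abelian variety $A_0 := E_1^g / \F_{p^2}$ is an $\F_{p^2}$-model of $\Jac(C)$ on which Frobenius still acts as multiplication by $-p$. Next, I would descend the canonical polarization $\Theta$. By the classification (Deligne, Shioda, Ibukiyama--Katsura--Oort) of principal polarizations on $E_1^g$ in terms of positive-definite unimodular Hermitian forms over the maximal order in $\End(E_1)\otimes\Q$, and the fact that such a Hermitian lattice is already defined over $\F_{p^2}$, one obtains a principal polarization $\lambda_0$ on $A_0$ with $(A_0,\lambda_0)\otimes_{\F_{p^2}} k \cong (\Jac(C),\Theta)$.

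Third, I would invoke Torelli's theorem. The point of $\mathcal{A}_g(\F_{p^2})$ represented by $(A_0,\lambda_0)$ lies in the Torelli locus geometrically, because over $k$ it is the polarized Jacobian of $C$; one extracts an $\F_{p^2}$-curve $X$ with $(\Jac(X),\Theta_X) \cong (A_0,\lambda_0)$ and applies Torelli again to get $X \otimes_{\F_{p^2}} k \cong C$. Because Frobenius on $\Jac(X) = A_0$ equals $-p$, the eigenvalues on $H^1_\ell(X)$ are all $-p$, whence $\#X(\F_{p^2}) = p^2+1+2gp$, and $X$ is maximal of the prescribed genus $g$.

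The hard part will be the descent of the polarized abelian variety together with the verification that the resulting $(A_0,\lambda_0)$ lies in the image of the Torelli morphism \emph{over} $\F_{p^2}$, rather than merely geometrically; this is a standard obstruction when $g\ge 4$ since the Torelli locus is a proper closed substack of $\mathcal{A}_g$. I expect the hypothesis $g > \frac{p^2+1}{2p}$, equivalent to $2gp > p^2+1$, to enter at exactly this step: it forces $\#X(\F_{p^2}) > 2(p^2+1)$, which by an elementary Weil-type counting argument excludes the unwanted twists (in particular the sign ambiguity on Frobenius eigenvalues and the possible presence of an $\F_{p^2}$-rational splitting of $(A_0,\lambda_0)$ as a product of lower-dimensional polarized Jacobians), pinning down the correct $\F_{p^2}$-form and ensuring that the descended curve has full genus $g$.
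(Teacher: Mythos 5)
The paper's proof is a two-line argument: it cites the proof of Ekedahl's Theorem~1.1, which already shows that a superspecial curve $C$ descends to a curve $X$ over $\F_{p^2}$ with $\sharp X(\F_{p^2}) = 1 + p^2 \pm 2gp$, and then observes that the hypothesis $g > \frac{p^2+1}{2p}$, i.e.\ $2gp > p^2+1$, makes the minus option negative and hence impossible, leaving $X$ maximal. Your proposal re-derives Ekedahl's descent from scratch: model the Jacobian as $E_1^g$ with $E_1/\F_{p^2}$ having Frobenius $-p$, descend the principal polarization, and invoke Torelli. As an outline this is indeed what Ekedahl does, so the route is not genuinely different; but two points need correction.

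First, the step you label ``the hard part'' --- extracting an $\F_{p^2}$-curve from a PPAV that is geometrically a Jacobian --- is exactly where the $\pm$ in Ekedahl's point count originates, and you do not resolve it. The precise statement (Serre, Sekiguchi) is: for a geometrically nonhyperelliptic curve, a $K$-model of the polarized Jacobian yields a $K$-model of the curve only up to a quadratic twist; the curve is realized as the Jacobian of either $(A_0,\lambda_0)$ or its $(-1)$-twist, on which Frobenius acts as $+p$. Thus one gets a curve $X/\F_{p^2}$ with $\sharp X(\F_{p^2}) = 1 + p^2 \pm 2gp$, not unconditionally with the sign you want, and this ambiguity is intrinsic, not removable by choosing the Hermitian lattice carefully.

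Second, your account of where the hypothesis $g > \frac{p^2+1}{2p}$ enters is off. It is not needed to rule out ``an $\F_{p^2}$-rational splitting of $(A_0,\lambda_0)$ as a product of lower-dimensional polarized Jacobians'' (the theta polarization of a Jacobian of a geometrically irreducible curve is indecomposable, over any field, so this never arises); nor is the inequality $\sharp X(\F_{p^2}) > 2(p^2+1)$ what does the work. The hypothesis is used once, at the very last step, to observe that $1 + p^2 - 2gp < 0$ is impossible for a point count, killing the unwanted twist and forcing the maximal sign. With that correction your argument closes and matches the paper's (via Ekedahl), but as written the justification is misplaced and the Torelli descent is asserted rather than handled.
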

\begin{proof}
By the proof of \cite{Ekedahl}, Theorem 1.1, 
$C$ descends to a curve $X$ over $\F_{p^2}$ with 
$\sharp X(\F_{p^2}) = 1\pm 2gp+p^2$.
By the assumption, $1 - 2 g p+p^2$ is negative.
Hence
$\sharp X(\F_{p^2}) = 1 + 2 g p+p^2$ holds, i.e.,
$X$ is a maximal curve over $\F_{p^2}$.
\end{proof}
\section{Determining superspecialities}\label{sec:algorithm}

Let $p$ be a rational prime. 
Let $K$ be a perfect field of characteristic $p$.
In this section, we present a method to get the Hasse-Witt matrix of a curve in $\mathbf{P}^3$ defined by two equations, and give an algorithm to test whether the variety defined by given equations is non-singular or not.


\subsection{Hasse-Witt matrices of curves defined by two equations}\label{subsec:HWgenus4}

Given a graded module $M$ and an integer $\ell$, we denote by $M ( \ell )$ its $\ell$-twist given by $M (\ell)_t = M_{\ell + t}$.
We first show the following lemma, which gives graded free resolutions for ideals generated by two homogeneous polynomials (see also \cite[Chapter 1, Exercise 1]{Eisenbud}).


\begin{lem}\label{lem:1}
Let $S = K [ X_0 , \ldots , X_r ]$ be the polynomial ring with $r + 1$ variables over a field $K$ (not necessarily perfect nor of positive characteristic).
Let $f$ and $g$ be homogeneous polynomials with $\mathrm{gcd} ( f, g ) = 1$ in $S \smallsetminus \{ 0 \}$.
Put $c := \mathrm{deg} ( f )$ and $d := \mathrm{deg} ( g )$.
Then for every $n > 0$, the following sequence is an exact sequence of graded $S$-modules:
$$\xymatrix{
0 \ar[r]^(0.35){\varphi_3^{( n )}} & S \left( - ( c + d ) n \right) \ar[r]^(0.45){\varphi_2^{( n )}} & S ( - c n) \oplus S ( - d n ) \ar[r]^(0.75){\varphi_1^{( n )}} & S \ar[r]^(0.30){\varphi_0^{( n )}}  & S / \langle f^n , g^n \rangle_S \ar[r] & 0,
}$$
where $\varphi_0^{( n )}$ is the canonical homomorphism and the set of homomorphisms $\{ \varphi_2^{( n)}, \varphi_1^{( n )} \}$ is represented by the set of matrices $\{ A_2, A_1 \}$ as follows:
\begin{equation}
\left\{ A_2 = 
\left[
\begin{array}{cc}
g^n & - f^n
\end{array}
\right],
A_1 = 
\left[
\begin{array}{c}
f^n \\
g^n
\end{array}
\right]
\right\}.\nonumber
\end{equation}
\end{lem}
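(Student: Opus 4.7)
The plan is to verify directly that the displayed complex is exact at each of its four interior positions, using only that $S$ is a unique factorization domain together with the hypothesis $\gcd(f,g)=1$. Before touching exactness I would check compatibility with the grading: a degree-$t$ element $(a,b)$ of $S(-cn)\oplus S(-dn)$ consists of $a\in S_{t-cn}$ and $b\in S_{t-dn}$, so $af^n + bg^n$ lies in $S_t$, and analogously an element $h\in S_{t-(c+d)n}$ is sent to $(g^n h, -f^n h)\in S(-cn)_t\oplus S(-dn)_t$. Hence both $\varphi_1^{(n)}$ and $\varphi_2^{(n)}$ are homogeneous of degree zero with respect to the shifted gradings.

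Exactness at the two outer interior terms is essentially tautological. Exactness at $S/\langle f^n,g^n\rangle_S$ is just surjectivity of the canonical quotient, and exactness at $S$ is the definition $\Ker(\varphi_0^{(n)}) = \langle f^n,g^n\rangle_S = \Im(\varphi_1^{(n)})$. Injectivity of $\varphi_2^{(n)}$, and hence exactness at $S(-(c+d)n)$, follows because $S$ is an integral domain and neither $f^n$ nor $g^n$ is zero: if $(g^n h, -f^n h) = 0$ then $h = 0$.

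The substantive step is exactness at the middle term $S(-cn)\oplus S(-dn)$. The containment $\Im(\varphi_2^{(n)})\subseteq \Ker(\varphi_1^{(n)})$ is immediate from the computation $g^n\cdot f^n + (-f^n)\cdot g^n = 0$. For the reverse containment I would take $(a,b)$ with $af^n + bg^n = 0$, i.e.\ $af^n = -bg^n$. Because $S$ is a UFD, the hypothesis $\gcd(f,g)=1$ upgrades to $\gcd(f^n,g^n)=1$ (the irreducible factorizations of $f$ and of $g$ share no prime factor, so neither do those of $f^n$ and $g^n$). Consequently $g^n$ divides $a$ in $S$; writing $a = g^n h$ and cancelling the nonzero factor $g^n$ in $g^n h\cdot f^n = -b g^n$ gives $b = -f^n h$, so $(a,b) = \varphi_2^{(n)}(h)$. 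The only delicate point in the whole proof is this UFD upgrade, which is exactly the reason the length-two Koszul complex on the regular sequence $(f^n,g^n)$ is a resolution; beyond that there is no real obstacle, and the verification is a short direct computation at each position.
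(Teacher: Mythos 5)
Your proof is correct and follows essentially the same route as the paper's: the only substantive step in either argument is exactness at the middle term, and both you and the authors reduce it to the observation that in a UFD $\gcd(f,g)=1$ forces $g^n\mid a$ and $f^n\mid b$ whenever $af^n+bg^n=0$. You spell out slightly more detail (the explicit upgrade to $\gcd(f^n,g^n)=1$ and the grading check), but the argument is the same one.
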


\begin{proof}
To simplify the notations, we denote by $\varphi_i$ the homomorphism $\varphi_i^{( n )}$ for each $0 \leq i \leq 3$ in this proof.
We show $\mathrm{Ker} \left( \varphi_1 \right) = \langle [ g^n, - f^n ] \rangle_S$, where $\langle [ g^n, - f^n ] \rangle_S$ denotes the (graded) submodule generated by $[ g^n, - f^n ]$ in $S ( - c n ) \oplus S ( - d n )$.
Let $[ h_1, h_2 ]$ be an arbitrary element in the submodule $\mathrm{Ker} \left( \varphi_1 \right) \subset S ( - c n) \oplus S ( - d n )$.
Then we have $\varphi_1 \left( [ h_1, h_2 ] \right) = f^n h_1 + g^n h_2 = 0$ and thus $f^n h_1 = - g^n h_2$.
Since $\mathrm{gcd} ( f, g ) = 1$, there exists $r \in S$ such that $h_1 = r g^n$ and $h_2 = - r f^n$.
Hence we have $[ h_1, h_2 ] = r \cdot [ g^n, - f^n ] \in \langle [ g^n, - f^n ] \rangle_S$.
Conversely, we have $\varphi_1 \left( [ g^n, - f^n ] \right) = f^n g^n - g^n f^n = 0$.
Thus it follows that $\mathrm{Ker} \left( \varphi_1 \right) = \langle [ g^n, - f^n ] \rangle_S$, i.e., $\mathrm{Ker} \left( \varphi_1 \right) = \mathrm{Im} \left( \varphi_2 \right) $. 
Clearly we have $\mathrm{Ker} \left( \varphi_0 \right) = \mathrm{Im} \left( \varphi_1 \right)$ and $\varphi_2$ (resp. $\varphi_0$) is injective (resp. surjective).
\end{proof}

\begin{rem}
In this paper, a homomorphism of graded $S$-modules is a graded homomorphism \textit{of degree zero} of graded $S$-modules.
In more detail, we say that a graded homomorphism 
\[
\varphi : \bigoplus_{j = 1}^{t} S(-d_j) \longrightarrow \bigoplus_{j = 1}^{t^{\prime}} S(- d_j^{\prime}) \ ; \ {\bf v} \mapsto {\bf v} A
\]
is \textit{of degree zero} if each $( k, \ell )$-entry $g_{k, \ell} \in S$ of the representation matrix $A = \left( g_{k, \ell} \right)_{k, \ell}$ is homogeneous of degree $d_k - d_{\ell}^{\prime}$.
\end{rem}

By Lemma \ref{lem:1}, the following lemma also holds.

\begin{lem}\label{lem:2}
Let $S = K [ X_0 , \ldots , X_r ]$ be the polynomial ring with $r + 1$ variables over a field $K$ (not necessarily perfect nor of positive characteristic).
Let $f$ and $g$ be homogeneous polynomials with $\mathrm{gcd} ( f, g ) = 1$ in $S \smallsetminus \{ 0 \}$.
Put $c := \mathrm{deg} ( f )$, $d := \mathrm{deg} ( g )$, $I := \langle f, g \rangle_S$ and $I_n := \langle f^n, g^n \rangle_S$ for $n > 0$.
Then for every $n > 0$, the following diagram of homomorphisms of graded $S$-modules commutes, and each horizontal sequence is exact:
$$\xymatrix{
0 \ar[r]^(0.35){\varphi_3^{( n )}} & S \left( - ( c + d ) n \right)  \ar[d]^{\psi_{2}} \ar[r]^(0.45){\varphi_{2}^{( n )}} &                      S ( - c n ) \oplus S ( - d n ) \ar[d]^{\psi_{1}} \ar[r]^(0.75){\varphi_{1}^{( n )}} & S  \ar[d]^{\psi_{0}} \ar[r]^(0.5){\varphi_0^{( n )}} & S / I_n                  \ar[d]^{\psi} \ar[r]    & 0            \\
0 \ar[r]^(0.35){\varphi_3^{( 1 )}} & S \left( - ( c + d ) \right)                                     \ar[r]^(0.45){\varphi_2^{( 1 )}} & S ( - c  ) \oplus S ( - d  )  \ar[r]^(0.75){\varphi_{1}^{( 1 )}} & S         \ar[r]^(0.5){\varphi_0^{( 1 )}} & S / I     \ar[r]    & 0            
}$$
where each $\varphi_i^{( m )}$ is defined as in Lemma \ref{lem:1}, $\psi_0$ is the identity map on $S$, and $\psi$ is the homomorphism defined by $h + I_n \mapsto h + I$.
In addition, the set of homomorphisms $\{ \psi_2, \psi_1 \}$ is represented by the set of matrices $\{ P_2, P_1 \}$ as follows:
\begin{equation}
\left\{ P_2 = 
\left[
\begin{array}{cc}
( f g )^{n-1}
\end{array}
\right],
P_1 = 
\left[
\begin{array}{cc}
f^{n-1} & 0\\
0 & g^{n-1}
\end{array}
\right]
\right\}.\nonumber
\end{equation}
\end{lem}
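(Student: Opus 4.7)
The plan is to assemble the diagram one piece at a time, reducing everything to a direct computation on the free generators. Exactness of the top row follows by applying Lemma \ref{lem:1} with exponent $n$, and exactness of the bottom row by applying the same lemma with exponent $1$; these give the two horizontal resolutions without further work. The only new content is therefore the existence and commutativity of the vertical arrows.

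Next I would verify that the maps given by the matrices $P_1$ and $P_2$ are in fact graded homomorphisms of degree zero, which is just a bookkeeping check on degrees. For $\psi_2\colon S(-(c+d)n)\to S(-(c+d))$, the single entry $(fg)^{n-1}$ must have degree $(c+d)n-(c+d)=(c+d)(n-1)$, which is correct. For $\psi_1\colon S(-cn)\oplus S(-dn)\to S(-c)\oplus S(-d)$, the diagonal entries $f^{n-1}$ and $g^{n-1}$ must have degrees $c(n-1)$ and $d(n-1)$ respectively, which again is correct. That $\psi_0$ is well-defined of degree zero is obvious, as is the well-definedness of $\psi$ (since $I_n\subset I$).

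Then I would check commutativity of the three squares by chasing generators. The rightmost square commutes trivially: both compositions send $h\in S$ to $h+I\in S/I$. The middle square amounts to the identity
\[
f\cdot(f^{n-1}h_1)+g\cdot(g^{n-1}h_2)=f^n h_1+g^n h_2,
\]
which is immediate. The leftmost square is the one slightly less obvious step: starting from $h\in S(-(c+d)n)$, one route gives $\psi_1(\varphi_2^{(n)}(h))=\psi_1([g^n h,-f^n h])=[f^{n-1}g^n h,-f^n g^{n-1}h]$, while the other gives $\varphi_2^{(1)}(\psi_2(h))=\varphi_2^{(1)}((fg)^{n-1}h)=[g(fg)^{n-1}h,-f(fg)^{n-1}h]$, and these agree after rearranging $(fg)^{n-1}$. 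Assembling these three verifications with the already-established exactness of both rows completes the proof.

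I do not expect any genuine obstacle here; the statement is essentially the observation that the Koszul-type resolutions for $I_n$ and $I$ admit a natural comparison map induced by the surjection $S/I_n\twoheadrightarrow S/I$, and once the matrices $P_1,P_2$ are written down the verification is routine. The only mild care required is matching the row-vector/column-vector conventions used to present $\varphi_1^{(n)}$ and $\varphi_2^{(n)}$ in Lemma \ref{lem:1}, so that the matrix products on the two sides of each square really are equal rather than transposed.
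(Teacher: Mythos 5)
Your proposal is correct and matches the paper's approach: the paper states Lemma~3.1.2 with only the remark that it ``also holds'' by Lemma~3.1.1, leaving the degree bookkeeping and the commutativity of the three squares as routine verifications, which is exactly what you supplied. Your check of the left square is the only one with any content, and it is carried out correctly under the paper's right-multiplication convention $\mathbf{v}\mapsto\mathbf{v}A$ from the remark following Lemma~3.1.1.
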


Let $K$ be a perfect field with $\mathrm{char} ( K )  = p > 0$ (e.g., $K = \mathbb{F}_{p^s}$ or $\overline{\mathbb{F}_{p^s}}$).
Let $\mathbf{P}^3 = \mathrm{Proj} ( S )$ denote the projective $3$-space for the polynomial ring $S := K [ x, y, z, w]$.
For a graded $S$-module $M$, let $\widetilde{M}$ denote the sheaf associated with $M$ on $\mathbf{P}^3$.
Now we describe a method to compute the Hasse-Witt matrix of the curve $C = V ( f, g ) \subset \mathbf{P}^3$ for given $p$ and homogeneous polynomials $f, g \in S$ with $\mathrm{gcd} ( f, g) = 1$ in $S$.
We use the same notations as in Lemma \ref{lem:2}, and take $n = p$.
Put $\varphi_i := \varphi_i^{(1)}$ and
\begin{equation}
\Phi_i:=\widetilde{\varphi_i},\quad \Phi_i^{(p)}:= \widetilde{\varphi_i^{(p)}},\quad \Psi:=\widetilde{\psi},\quad \mbox{and}\quad \Psi_i:=\widetilde{\psi_i}. \nonumber
\end{equation}
By Lemma \ref{lem:2}, the following diagram commutes:
$$\xymatrix{
H^1 \left( C, {\cal O}_C \right)                  \ar[dd]_{F^{\ast}} \ar[rr]^(0.5){\cong} \ar[rd]^{{(F_1 |_{C^p})}^{\ast}} &  & H^2 ( \mathbf{P}^3, \widetilde{I} )     \ar[r]^(0.45){\cong}              \ar[d]^{F_1^{\ast}}  & \mathrm{Ker} \left( H^3 ( \Phi_2 ) \right) \ar[d]^{F_1^{\ast}} \\
& H^1 \left( C^p, {\cal O}_{C^p} \right) \ar[r]^{\cong} \ar[ld]^{H^1 ( \Psi )} & H^2 ( \mathbf{P}^3, \widetilde{I_p} ) \ar[r]^(0.45){\cong} \ar[d]^{H^2 ( \Psi_0 )} & \mathrm{Ker} \left( H^3 ( \Phi_2^{( p )} ) \right) \ar[d]^{H^3 ( \Psi_2 )} \\
H^1 \left( C, {\cal O}_C \right)                  \ar[rr]^(0.5){\cong} &  & H^2 ( \mathbf{P}^3, \widetilde{I} ) \ar[r]^(0.45){\cong} & \mathrm{Ker} \left( H^3 ( \Phi_2 )  \right) 
}$$
where $F_1$ (resp. $F$) is the Frobenius morphism on $\mathbf{P}^3$ (resp. $C$) and $C^p :=  V \left( f^p, g^p \right)$.
The following proposition is viewed as a generalization of Proposition 4.21 in \cite[Chapter 4]{Har}.

\begin{prop}\label{prop:HW_general}
Let $K$ be a perfect field with $\mathrm{char} ( K )  = p > 0$.
Let $f$ and $g$ be homogeneous polynomials with $\mathrm{deg}( f ) = c$ and $\mathrm{deg} ( g ) = d$ such that $\mathrm{gcd}( f, g ) = 1$ in $S:=K [x,y,z,w]$, $c \leq 3$ and $d \leq 3$.
Let $C = V ( f, g )$ be the curve defined by the equations $f = 0$ and $g = 0$ in $\mathbf{P}^3$.
Write $(f g)^{p-1} =  \sum c_{i_1, i_2, i_3, i_4} x^{i_1} y^{i_2} z^{i_3} w^{i_4}$ and
\begin{equation}
\{ (k,\ell,m,n) \in ( \mathbb{Z}_{<0} )^4 : k + \ell + m + n = - c - d \} = \{ (k_1, \ell_1, m_1, n_1), \ldots , (k_r, \ell_r, m_r, n_r ) \}, \nonumber
\end{equation}
where we note that $r = \mathrm{dim}_K H^1 (C, \mathcal{O}_C)$.
Then the Hasse-Witt matrix of $C$ is given by
\begin{equation}
\left[
\begin{array}{ccc}
	c_{- k_1 p + k_1, - \ell_1 p + \ell_1, - m_1 p + m_1, - n_1 p + n_1} & \cdots & c_{- k_r p + k_1, - \ell_r p + \ell_1, - m_r p + m_1, - n_r p + n_1} \\
\vdots & & \vdots \\
	c_{- k_1 p + k_r, - \ell_1 p + \ell_r, - m_1 p + m_r, - n_1 p + n_r} & \cdots & c_{- k_r p + k_r, - \ell_r p + \ell_r, - m_r p + m_r, - n_r p + n_r}
\end{array}
\right]. \nonumber
\end{equation}
%
\end{prop}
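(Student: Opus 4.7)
The plan is to trace an explicit basis of $H^1(C, \mathcal{O}_C)$ around the outer rectangle of the commutative diagram immediately preceding the statement, using the Čech description with respect to the standard affine cover of $\mathbf{P}^3$.

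First, I would justify the identifications along the bottom row of the diagram. From the short exact sequence $0 \to \widetilde{I} \to \mathcal{O}_{\mathbf{P}^3} \to \mathcal{O}_C \to 0$ combined with the vanishing of $H^1(\mathbf{P}^3,\mathcal{O})$ and $H^2(\mathbf{P}^3,\mathcal{O})$, one obtains $H^1(C, \mathcal{O}_C) \cong H^2(\mathbf{P}^3, \widetilde{I})$. Sheafifying the resolution of Lemma \ref{lem:1} with $n=1$ yields the short exact sequence $0 \to \mathcal{O}(-c-d) \xrightarrow{\Phi_2} \mathcal{O}(-c)\oplus\mathcal{O}(-d) \to \widetilde{I} \to 0$. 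Since $c,d \leq 3$, both $H^3(\mathbf{P}^3,\mathcal{O}(-c))$ and $H^3(\mathbf{P}^3,\mathcal{O}(-d))$ vanish (they require the twist to be at most $-4$), and the intermediate groups vanish as well, so the connecting map furnishes an isomorphism $H^2(\mathbf{P}^3, \widetilde{I}) \cong \ker H^3(\Phi_2) = H^3(\mathbf{P}^3, \mathcal{O}(-c-d))$.

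Next, I would fix the standard Čech basis: with respect to the cover $\{D_+(x),D_+(y),D_+(z),D_+(w)\}$, the space $H^3(\mathbf{P}^3, \mathcal{O}(-c-d))$ is spanned precisely by Laurent monomials $x^{k}y^{\ell}z^{m}w^{n}$ with $k,\ell,m,n$ strictly negative integers summing to $-(c+d)$. This gives the $r$ basis vectors $e_j = x^{k_j} y^{\ell_j} z^{m_j} w^{n_j}$ enumerated in the statement, and in particular recovers $r = \dim_K H^1(C, \mathcal{O}_C)$. Now I would push each $e_j$ around the outer rectangle of the diagram. The Frobenius $F_1^{\ast}$ on Čech cochains raises a representing monomial to its $p$-th power, sending $e_j$ to $x^{pk_j} y^{p\ell_j} z^{pm_j} w^{pn_j} \in H^3(\mathbf{P}^3, \mathcal{O}(-(c+d)p))$. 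By Lemma \ref{lem:2}, the map $H^3(\Psi_2)$ is multiplication by $(fg)^{p-1}$, so the resulting element of $H^3(\mathbf{P}^3, \mathcal{O}(-c-d))$ equals
\[
\sum_{i_1,i_2,i_3,i_4} c_{i_1,i_2,i_3,i_4}\, x^{i_1+pk_j} y^{i_2+p\ell_j} z^{i_3+pm_j} w^{i_4+pn_j}.
\]
In the Čech description, only terms with all four exponents strictly negative survive; matching the $x$-exponent with $k_i$ (and similarly for $y,z,w$) forces $i_1 = k_i - pk_j = -k_j p + k_i$, so the coefficient of $e_i$ is precisely $c_{-k_j p + k_i,\, -\ell_j p + \ell_i,\, -m_j p + m_i,\, -n_j p + n_i}$. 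Assembling these coefficients into the matrix of the outer vertical composition with respect to $\{e_j\}$ gives the asserted formula.

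The main obstacle is the commutativity of the diagram, specifically that the Frobenius on $H^1(C, \mathcal{O}_C)$ factors as claimed through multiplication by $(fg)^{p-1}$ on the ambient cohomology. This rests on the factorization of the absolute Frobenius $\mathcal{O}_C \to \mathcal{O}_C$ through $\mathcal{O}_{C^p}$ via $\Psi$, together with the fact that, at the level of the top terms of the Koszul resolutions of $I$ and $I_p$, the comparison map $\psi_2$ is encoded by the $1\times 1$ matrix $P_2 = [(fg)^{p-1}]$ of Lemma \ref{lem:2}. Once this naturality is checked on Čech representatives (straightforward from the explicit resolutions), everything else is a mechanical bookkeeping of exponents, so I do not expect any further difficulty beyond this step.
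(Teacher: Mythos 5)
Your proposal is correct and follows essentially the same approach as the paper: identify $H^1(C,\mathcal{O}_C)$ with $H^3(\mathbf{P}^3,\mathcal{O}(-c-d))$ via the sheafified Koszul resolution, use the Čech monomial basis, push a basis vector around the commutative diagram of Lemma \ref{lem:2}, and read off coefficients from $(fg)^{p-1}\cdot F_1^\ast$. You supply slightly more detail than the paper does on why $\ker H^3(\Phi_2)$ collapses to all of $H^3(\mathcal{O}(-c-d))$ (the $c,d\le 3$ vanishing), which the paper leaves implicit, but the mechanism and the bookkeeping of exponents are identical.
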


\begin{proof}
By Lemma \ref{lem:2}, we have the following commutative diagram:
$$\xymatrix{
H^1 \left( C, \mathcal{O}_C \right) \ar[d]_{{(F_1 |_{C^p})}^{\ast}} \ar[rr]^(0.45){\cong} &  & \Ker \left( H^3 ( \Phi_2 ) \right) \ar[d]^{F_1^{\ast}} \ar[rr]^(0.4){\cong} & & H^3 \left( \mathbf{P}^3, \mathcal{O}_{\mathbf{P}^3} ( - c - d ) \right) \ar[d]^{F_1^{\ast}} \\
H^1 \left( C^p, \mathcal{O}_{C^p} \right) \ar[rr]^(0.45){\cong} \ar[d]_{H^1 ( \Psi )} & & \mathrm{Ker} \left( H^3 \left( \Phi_2^{( p )} \right) \right) \ar[rr] \ar[d]^{H^3 ( \Psi_2 )} & & H^3 \left( \mathbf{P}^3, \mathcal{O}_{\mathbf{P}^3} ( (- c - d) p ) \right) \ar[d]^{( f g )^{p-1}} \\
H^1 \left( C, \mathcal{O}_C \right)   \ar[rr]^(0.45){\cong} &  & \mathrm{Ker} \left( H^3 ( \Phi_2 )  \right) \ar[rr]^(0.4){\cong} & & H^3 \left( \mathbf{P}^3, \mathcal{O}_{\mathbf{P}^3} ( - c - d ) \right)
}$$
where $F_1$ (resp. $F$) is the Frobenius morphism on $\mathbf{P}^3$ (resp. $C$).
The $K$-vector space $H^3 \left( \mathbf{P}^3, {\cal O}_{\mathbf{P}^3}( - c - d ) \right)$ has a basis $\{ x^k y^{\ell} z^m w^n : (k,\ell,m,n) \in ( \mathbb{Z}_{<0} )^4,\ k + \ell + m + n = - c - d \}$.
For each $(k_i, \ell_i, m_i, n_i)$,
\begin{eqnarray}
(f g)^{p-1} \cdot F_1^{\ast} \left( x^{k_i} y^{\ell_i} z^{m_i} w^{n_i} \right) & = & (f g)^{p-1} \cdot ( x^{k_i p} y^{\ell_i p} z^{m_i p} w^{n_i p}) \nonumber \\
& = & \sum c_{i_1, i_2, i_3, i_4} x^{i_1 + k_i p} y^{i_2 + \ell_i p} z^{i_3 + m_i p} w^{i_4 + n_i p} \nonumber \\
& = & \sum_{j=1}^r c_{- k_i p + k_j, - \ell_i p + \ell_j, - m_i p + m_j, - n_i p + n_j} x^{k_j} y^{\ell_j} z^{m_j} w^{n_j}. \nonumber
\end{eqnarray}
Hence our claim holds.
\end{proof}

\begin{rem}
As in Proposition \ref{prop:HW_general}, we can compute the Hasse-Witt matrix algorithmically for more general cases, see \cite[Section 5]{Kudo} for details. 
\end{rem}

\begin{cor}\label{prop:HW}
Let $K$ be a perfect field with $\mathrm{char} ( K )  = p > 0$.
Let $f$ and $g$ be homogeneous polynomials with $\mathrm{deg}( f ) = 3$ and $\mathrm{deg} ( g ) = 2$ such that $\mathrm{gcd}( f, g ) = 1$ in $S:=K [x,y,z,w]$.
Let $C = V ( f, g )$ be the curve defined by the equations $f = 0$ and $g = 0$ in $\mathbf{P}^3$.
Then the Hasse-Witt matrix of $C$ is $0$ if and only if all the coefficients of
\begin{equation}
\begin{array}{cccc}
( x^2 y z w )^{p-1}, & x^{2 p-1} y^{p-2} z^{p-1} w^{p-1}, & x^{2 p-1} y^{p-1} z^{p - 2} w^{p -1}, &  x^{2 p -1} y^{p-1} z^{p - 1} w^{p -2}, \\
x^{p-2} y^{2 p-1} z^{p-1} w^{p-1}, & ( x y^2 z w )^{p-1} , & x^{p-1} y^{2 p-1} z^{p - 2} w^{p -1}, &  x^{p -1} y^{2 p-1} z^{p - 1} w^{p -2}, \\
x^{p-2} y^{p-1} z^{2 p - 1} w^{p -1}, & x^{p-1} y^{p-2} z^{2 p-1} w^{p-1}, & ( x y z^2 w )^{p-1} , &  x^{p -1} y^{p-1} z^{2 p - 1} w^{p -2}, \\
 x^{p -2} y^{p-1} z^{p - 1} w^{2 p -1}, & x^{p-1} y^{p-2} z^{p-1} w^{2 p-1}, & x^{p-1} y^{p-1} z^{p - 2} w^{2 p -1}, & ( x y z w^2 )^{p-1}
\end{array} \nonumber
\end{equation}
in $(f g)^{p-1}$ are equal to $0$. 
\end{cor}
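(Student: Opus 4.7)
The plan is to apply Proposition \ref{prop:HW_general} directly to the case $c = 3$, $d = 2$, and to make the enumeration of indices completely explicit. By that proposition, the Hasse--Witt matrix vanishes if and only if all $r^2$ coefficients $c_{-k_i p + k_j,\, -\ell_i p + \ell_j,\, -m_i p + m_j,\, -n_i p + n_j}$ of $(fg)^{p-1}$ vanish, where $(k_i,\ell_i,m_i,n_i)$ ranges over the set
\[
S := \{(k,\ell,m,n) \in (\mathbb{Z}_{<0})^4 : k+\ell+m+n = -5\}.
\]

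First I would enumerate $S$. Writing $(k,\ell,m,n) = (-a_1,-a_2,-a_3,-a_4)$ with $a_i \geq 1$ and $a_1+a_2+a_3+a_4 = 5$, exactly one $a_i$ equals $2$ and the others equal $1$. Thus $|S| = 4$ (which matches $r = \dim_K H^1(C,\mathcal{O}_C) = g = 4$), and one may order
\[
(k_1,\ell_1,m_1,n_1) = (-2,-1,-1,-1),\quad (k_2,\ell_2,m_2,n_2) = (-1,-2,-1,-1),
\]
\[
(k_3,\ell_3,m_3,n_3) = (-1,-1,-2,-1),\quad (k_4,\ell_4,m_4,n_4) = (-1,-1,-1,-2).
\]

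Next I would tabulate the $16$ exponent vectors $(-k_i p + k_j,\, -\ell_i p + \ell_j,\, -m_i p + m_j,\, -n_i p + n_j)$ for $1 \leq i,j \leq 4$. When $i = j$ the vector has entries $(2p-2, p-1, p-1, p-1)$ with the $2p-2$ in the position where $a_i = 2$, giving the four diagonal monomials $(x^2yzw)^{p-1}$, $(xy^2zw)^{p-1}$, $(xyz^2w)^{p-1}$, $(xyzw^2)^{p-1}$. When $i \neq j$, the position of $a_i = 2$ contributes $2p-1$, the position of $a_j = 2$ contributes $p-2$, and the remaining two positions contribute $p-1$, producing precisely the twelve off-diagonal monomials listed in the statement. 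Matching these $16$ monomials against the $4 \times 4$ array in the corollary is now mechanical.

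The main (and essentially only) obstacle is bookkeeping: one must verify that the indexing in Proposition \ref{prop:HW_general} is transcribed correctly and that the combinatorial enumeration of $S$ exhausts all negative-integer solutions of $k+\ell+m+n = -5$. Once the list of $16$ exponent tuples has been written out, the statement follows immediately from Proposition \ref{prop:HW_general}, since the Hasse--Witt matrix is the zero matrix exactly when each of its $16$ entries, being one of the coefficients listed, vanishes.
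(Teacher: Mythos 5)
Your proposal is correct and is essentially the proof the paper intends: Corollary \ref{prop:HW} is exactly the specialization of Proposition \ref{prop:HW_general} to $c=3$, $d=2$, $r=4$, and the only work is the bookkeeping you carry out — verifying that the set $S$ of negative exponent $4$-tuples summing to $-5$ has the four stated elements, and that the sixteen resulting exponent vectors $(-k_i p + k_j,\dots,-n_i p + n_j)$ reproduce the sixteen monomials in the array (up to a harmless transposition/relabelling, which does not affect the ``all entries vanish'' condition). No gap.
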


\if 0
\begin{pf}
(Sketch) We have the following commutative diagram:
$$\xymatrix{
H^1 \left( C, {\cal O}_C \right) \ar[d]_{{(F_1 |_{C^p})}^{\ast}} \ar[rr]^(0.35){\cong} &  & \Ker \left( H^3 ( \Phi_2 ) \right) / \Im \left( H^3 ( \Phi_3 ) \right) \ar[d]^{F_1^{\ast}} \ar[rr]^(0.55){\cong} & & H^3 \left( \mathbf{P}^3_K, {\cal O}_{\mathbf{P}^3_K} ( - 5 ) \right) \ar[d]^{F_1^{\ast}} \\
H^1 \left( C^p, {\cal O}_{C^p} \right) \ar[rr]^(0.35){\cong} \ar[d]_{H^1 ( \Psi )} & & \mathrm{Ke}r \left( H^3 \left( \Phi_2^{( p )} \right) \right) / \mathrm{Im} \left( H^3 \left( \Phi_3^{( p )} \right) \right) \ar[rr]^(0.55){\cong} \ar[d]^{H^3 ( \Psi_2 )} & & H^3 \left( \mathbf{P}^3_K, {\cal O}_{\mathbf{P}^3_K} ( - 5 p ) \right) \ar[d]^{( f g )^{p-1}} \\
H^1 \left( C, {\cal O}_C \right)   \ar[rr]^(0.35){\cong} &  & \mathrm{Ker} \left( H^3 ( \Phi_2 )  \right) / \mathrm{Im} \left( H^3 ( \Phi_3 ) \right) \ar[rr]^(0.55){\cong} & & H^3 \left( \mathbf{P}^3_K, {\cal O}_{\mathbf{P}^3_K} ( - 5 ) \right)
}$$
where $F_1$ (resp. $F$) is the Frobenius morphism on $\mathbf{P}^3_K$ (resp. $C$).
Note that $H^3 \left( \mathbf{P}^3_K, {\cal O}_{\mathbf{P}^3_K}( - 5 ) \right)$ is a four-dimensional $K$-vector space with a basis $\{ x^{-2} y^{-1} z^{-1} w^{-1}, x^{-1} y^{-2} z^{-1} w^{-1}, x^{-1} y^{-1} z^{-2} w^{-1}, x^{-1} y^{-1} z^{-1} w^{-2} \}$.
In a similar way to Proposition 4.21 in \cite[Chapter 4]{Har}, our claim holds.\qed
\end{pf}
\fi

\if 0
In Algorithm \ref{alg:ss}, for given two homogeneous polynomials $f$ and $g$ in $S:=K [x,y,z,w]$ with $\mathrm{\deg} ( f ) = 3$ and $\mathrm{deg} ( g ) = 2$ such that $\mathrm{gcd} ( f, g ) = 1$, and characteristic $p > 0$ of $K$, we give an algorithm to test whether the the Hasse-Witt matrix of $V_K ( f, g )$ is zero or not. 

\begin{algorithm}[htb] %
\caption{$\texttt{DetermineHWZeroOrNot} ( f, g, p )$}
\label{alg:ss}
\begin{algorithmic}[1]
\REQUIRE{Two homogeneous polynomials $f$ and $g$ in $S:=K [x,y,z,w]$ with $\mathrm{deg} ( f ) = 3$ and $\mathrm{deg} ( g ) = 2$ such that $\mathrm{gcd} ( f, g ) = 1$, and characteristic $p > 0$ of $K$}
\ENSURE{``$\mathrm{HW} = 0$'' or ``$\mathrm{HW} \neq 0$''}
\STATE $\mathcal{M}$ $\leftarrow$ the set of the monomials given in Corollary \ref{prop:HW}
\STATE $h$ $\leftarrow$ $( f g )^{p-1}$
\STATE $stopflag$ $\leftarrow$ $0$
\FOR{$x^k y^{\ell} z^m w^n \in \mathcal{M}$}
	\STATE $c$ $\leftarrow$ the coefficient of $x^k y^{\ell} z^m w^n$ in $h$
	\IF{$c \neq 0$}
		\STATE $stopflag$ $\leftarrow$ $1$
		\STATE \textbf{break} $x^k y^{\ell} z^m w^n$
	\ENDIF
\ENDFOR
\IF{$stopflag = 0$}
	\RETURN ``$\mathrm{HW} = 0$''
\ELSE
	\RETURN ``$\mathrm{HW} \neq 0$''
\ENDIF
\end{algorithmic}
\end{algorithm}

\begin{prop}\label{prop:algHW}
Let $K$ be a perfect field with $\mathrm{char} ( K )  = p > 0$.
Let $f$ and $g$ be homogeneous polynomials with $\mathrm{deg}( f ) = 3$ and $\mathrm{deg} ( g ) = 2$ in $S:=K [x,y,z,w]$ such that $\mathrm{gcd}( f, g ) = 1$.
Let $C = V_K ( f, g )$ be the curve defined by the equations $f = 0$ and $g = 0$ in $\mathbf{P}^3_K$.
For the inputs $f$, $g$ and $p$, if Algorithm \ref{alg:ss} outputs ``$\mathrm{HW} = 0$'', then the Hasse-Witt matrix of $C$ is $0$.
If not, namely Algorithm \ref{alg:ss} outputs ``$\mathrm{HW} \neq 0$''. then the Hasse-Witt matrix of $C$ is not zero.
\end{prop}

\begin{pf}
Let $\mathcal{M}$ be the set of the monomials given in Corollary \ref{prop:HW}.
Put $h := (f g)^{p-1}$.
If $c = 0$ for each coefficient $c \in \mathcal{M}$, then $stopflag$ takes the value $0$ and in this case the Hasse-Witt matrix of $C$ is zero by Corollary \ref{prop:HW}.
If not, namely there exists some monomial $x^k y^{\ell} z^m w^n$ in $h$ such that its coefficient $c \in \mathcal{M}$ is not equal to zero, then $stopflag$ takes the value $1$ and the Hasse-Witt matrix of $C$ is not zero by Corollary \ref{prop:HW}.\qed
\end{pf}
\fi

\subsection{Determining non-singularity by Gr\"{o}bner basis computation}

Let $K$ be a field (not necessarily perfect nor of positive characteristic).
In this subsection, for given homogeneous polynomials $f_1, \ldots , f_t$ in $S:=K [ X_0, \ldots , X_r]$, we give an algorithm to test whether the variety $V ( f_1, \ldots , f_t )$ in $\mathbf{P}^r = \mathrm{Proj} (\overline{K}[X_0,\ldots , X_r])$ defined by the equations $f_1 = 0, \ldots , f_t = 0$ is non-singular or not. 

Let us first review a known property in Gr\"{o}bner basis theory:
Let $\succ$ be a term order on the monomials $X_0, \ldots , X_r$, $F$ an extension of the field $K$ and $T := F [ X_0, \ldots , X_r ]$ the polynomial ring with $r + 1$ variables over $F$.
Let $I := \langle f_1, \ldots , f_t \rangle_S$ (resp.\ $J:= \langle f_1, \ldots , f_t \rangle_{T}$) the ideal in $S$ (resp.\ $T$) generated by $f_1, \ldots , f_t$.
Then, for all $g \in S \subset T$, we have $\mathrm{NF}_{J, \succ} (g) = \mathrm{NF}_{I, \succ} (g) \in S$, where $\mathrm{NF}_{I, \succ} ( \cdot ) $ (resp.\ $\mathrm{NF}_{J, \succ} ( \cdot ) $) denotes the normal form function with respect to the ideal $I$ (resp.\ $J$) and the term order $\succ$.

\begin{lem}\label{lem:non-sing}
Let $S = K [X_0, \ldots , X_r]$ and $\overline{S} = \overline{K} [ X_0, \ldots , X_r ]$ denote the polynomial rings of $r+1$ variables over a field $K$ and its algebraic closure $\overline{K}$, respectively.
Let $f_1, \ldots , f_t$ be homogeneous polynomials in $S$. 
Let $J (f_1, \ldots , f_t )$ denote the set of all the minors of degree $r - \mathrm{dim} (V (f_1, \ldots , f_t))$ of the Jacobian matrix of $f_1, \ldots , f_t$.
Then the following are equivalent:
\begin{description}
	\item[{\rm (1)}] The variety $V ( f_1, \ldots , f_t )$ is non-singular. 
	\item[{\rm (2)}] For each $0 \leq i \leq r$, the variable $X_i$ belongs to the radical of $\langle J ( f_1, \ldots , f_t ), f_1, \ldots , f_t \rangle_S$.
\end{description}
\end{lem}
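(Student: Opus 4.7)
The plan is to combine the Jacobian criterion for smoothness with the projective Nullstellensatz, and then descend the resulting radical condition from $\overline{K}$ to $K$. First, by the (geometric) Jacobian criterion, a closed point $P$ of $V(f_1,\ldots,f_t) \subset \mathbf{P}^r_{\overline{K}}$ is non-singular if and only if the Jacobian matrix $\bigl(\partial f_i / \partial X_j\bigr)$ has rank $r - \dim V(f_1,\ldots,f_t)$ at $P$; equivalently, at least one of the minors in $J(f_1,\ldots,f_t)$ is non-vanishing at $P$. Therefore the singular locus of $V(f_1,\ldots,f_t)$, as a closed subvariety of $\mathbf{P}^r$, coincides with $V\bigl(\langle J(f_1,\ldots,f_t), f_1, \ldots, f_t\rangle\bigr)$, and (1) is equivalent to this projective variety being empty.

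Second, I would translate emptiness of the projective variety into an ideal-theoretic statement. By the projective Nullstellensatz, a projective variety $V(I) \subset \mathbf{P}^r_{\overline{K}}$ defined by a homogeneous ideal $I$ of $\overline{S} := \overline{K}[X_0,\ldots,X_r]$ is empty if and only if $\sqrt{I}$ contains the irrelevant ideal $\langle X_0,\ldots,X_r\rangle$, equivalently every variable $X_i$ lies in $\sqrt{I}$. Applied to $I = \langle J(f_1,\ldots,f_t), f_1,\ldots,f_t\rangle \overline{S}$, this gives that (1) holds iff each $X_i$ belongs to the radical of this ideal over $\overline{S}$.

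Third, I would descend from $\overline{K}$ to $K$. What is needed is that $X_i \in \sqrt{I\overline{S}}$ is equivalent to $X_i \in \sqrt{I}$ in $S$, where $I := \langle J(f_1,\ldots,f_t), f_1,\ldots,f_t\rangle_S$. The nontrivial direction follows from the observation recalled immediately before the lemma: for any term order $\succ$ and any $g \in S$, the normal form of $g$ modulo $I$ computed in $S$ agrees with the normal form of $g$ modulo $I\overline{S}$ computed in $\overline{S}$, because a Gr\"obner basis of $I$ remains a Gr\"obner basis after extending scalars to $\overline{K}$. In particular, $X_i^n \in I\overline{S}$ forces $\mathrm{NF}_{I\overline{S},\succ}(X_i^n) = 0 = \mathrm{NF}_{I,\succ}(X_i^n)$, whence $X_i^n \in I$, yielding the equivalence and completing the chain of implications between (1) and (2).

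The main obstacle, or at least the point requiring care, is ensuring that ``minors of size $r - \dim V$'' in the Jacobian really cut out the singular locus scheme-theoretically; this is implicit in the formulation and relies on $V(f_1,\ldots,f_t)$ being equidimensional at every closed point, which will be the case for the complete intersections $V(P,Q) \subset \mathbf{P}^3$ considered later in the paper. A secondary subtlety is the descent step, for which the Gr\"obner-basis stability under base extension recalled just before the lemma is essential; without it, condition (2) would have to be phrased over $\overline{K}$ rather than over $K$ and would not be directly checkable by Gr\"obner basis computation in $S$.
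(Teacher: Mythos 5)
Your proof is correct and follows the same overall route as the paper's: Jacobian criterion plus projective Nullstellensatz to reduce smoothness to radical membership over $\overline{K}$, then descent to $K$ via the Gr\"obner-basis-stability observation recalled just before the lemma. The one structural difference is in the descent step. You apply stability directly: from $X_i^n \in I\overline{S}$ you read off $\mathrm{NF}_{I\overline{S},\succ}(X_i^n) = \mathrm{NF}_{I,\succ}(X_i^n) = 0$, hence $X_i^n \in I$. The paper instead adjoins an auxiliary variable $Y$, invokes the Rabinowitsch trick (Proposition A.1) to convert radical membership into $1 \in \langle J, f_1, \ldots, f_t, 1 - YX_i\rangle$, applies stability to the element $1$, and then reverses the Rabinowitsch step in $S'$. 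Your version is slightly more economical as pure mathematics; the paper's version is chosen so that the proof exactly mirrors the procedure $\texttt{RadicalMembership}$ of Algorithm A.2, which is what the implementation actually runs, so nothing is lost in translating the lemma into code. Both are valid. You also correctly flag the equidimensionality hypothesis implicit in taking minors of the single size $r - \dim V$; the paper dispatches the converse (2) $\Rightarrow$ (1) with a bare ``clearly holds,'' which is safe in the intended application to complete intersection curves $V(P,Q) \subset \mathbf{P}^3$ but does merit the caution you raise if the lemma is read in full generality.
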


\begin{proof}
Put $S^{\prime} := K [X_0, \ldots , X_r, Y ]$ and $\left(\overline{S}\right)^{\prime} := \overline{K} [X_0, \ldots , X_r, Y]$, where $Y$ is a new variable.
Here we fix a term order $\succ$ on the monomials $X_0, \ldots , X_r$ and $Y$.
Note that $\succ$ can be viewed as a term order on the monomials $X_0, \ldots , X_r$.

(1) $\Rightarrow$ (2): Suppose that $V ( f_1, \ldots , f_t )$ is non-singular. 
	Then for each $0 \leq i \leq r$, the variable $X_i$ is in the radical of $\langle J ( f_1, \ldots , f_t ), f_1, \ldots , f_t \rangle_{\overline{S}} \subset \overline{S}$.
	Put $I := \langle J ( f_1, \ldots , f_t ), f_1, \ldots , f_t, 1 - Y X_i \rangle_{S^{\prime}}$ and $\overline{I}:= \langle J ( f_1, \ldots , f_t ), f_1, \ldots , f_t, 1 - Y X_i \rangle_{\left( \overline{S} \right)^{\prime}}$.
	By Proposition \ref{lem:radicalmenb}, we have $1 \in \overline{I}  \subset \left( \overline{S} \right)^{\prime}$, and thus $\mathrm{NF}_{\overline{I}, \succ} (1) = 0$.
	Here let $\mathrm{NF}_{I, \succ} ( \cdot ) $ denote the normal form function with respect to the ideal $I$ and the term order $\succ$.
	Since $\overline{I}$ is generated by elements in $S^{\prime}$, we also have $\mathrm{NF}_{\overline{I}, \succ} (1) = \mathrm{NF}_{I, \succ} (1)$.
	Thus it follows that $\mathrm{NF}_{I, \succ} (1) = 0$.
	Hence $X_i$ belongs to the radical of $\langle J ( f_1, \ldots , f_t ), f_1, \ldots , f_t \rangle_S \subset S$.

(2) $\Rightarrow$ (1): This claim clearly holds.
\end{proof}

In Algorithm \ref{alg:non-sing}, for given homogeneous polynomials $f_1, \ldots , f_t$ in $K [ X_0, \ldots , X_r]$, we give an algorithm to test whether the variety $V ( f_1, \ldots , f_t )$ in $\mathbf{P}^r = \mathrm{Proj} (\overline{K}[X_0,\ldots , X_r])$ (defined over $K$) is non-singular or not. 
Algorithm \ref{alg:non-sing} calls the sub-procedure function $\texttt{RadicalMembership}$ given in Appendix \ref{sec:radical}.

\begin{algorithm}[htb] %
\caption{$\texttt{DetermineNonSingularity} ( f_1, \ldots , f_t )$}
\label{alg:non-sing}
\begin{algorithmic}[1]
\REQUIRE{Homogeneous polynomials $f_1, \ldots , f_t$ in $S:=K [X_0, \ldots , X_r]$}
\ENSURE{``non-singular'' or ``singular''}
\STATE $nonsingularflag$ $\leftarrow$ $1$
\STATE $J (f_1, \ldots , f_t)$ $\leftarrow$ (the set of all the minors of degree $r - \mathrm{dim} (V (f_1, \ldots , f_t))$ of the Jacobian matrix of $f_1, \ldots , f_t$)
\STATE $I$ $\leftarrow$ $\langle J (f_1, \ldots , f_t), f_1, \ldots , f_t \rangle_S$
\FOR{$i=0$ \TO $r$}
	\IF{$\texttt{RadicalMembership} ( X_i, I)$ outputs ``$X_i \notin \sqrt{I}$''}
		\STATE $nonsingularflag$ $\leftarrow$ $0$
		\STATE \textbf{break} $i$
	\ENDIF
\ENDFOR
\IF{$nonsingularflag = 1$}
	\RETURN ``non-singular''
\ELSE
	\RETURN ``singular''
\ENDIF
\end{algorithmic}
\end{algorithm}

\section{Reduction of cubics by elements of orthogonal groups}\label{SectionReduction}
Let $p$ be a rational prime. Assume $p\ne 2$.
Let $K$ be a field of characteristic $p$.
Let $Q$ be $2xw + 2yz$ or $2xw + y^2 - \epsilon z^2$ for $\epsilon\in K^\times$ with $\epsilon\not\in (K^\times)^2$,
or $2yw - \epsilon z^2$ for a representative $\epsilon$ of $K^\times/(K^\times)^2$
as in Remark \ref{QuadFormOverFiniteField}.
Let $\varphi$ be the symmetric matrix associated to $Q$.
Consider the orthogonal group
$
\gO_\varphi(K) = \{g \in \GL_4(K) \mid {}^t g \varphi g = \varphi\}
$
and the orthogonal similitude group
\[
\tilde \gO_\varphi(K) = \{g \in \GL_4(K) \mid {}^t g \varphi g = \mu \varphi \text{ with } \mu\in K^\times\}.
\]
As $\mu$ is determined by $g$, we call $\mu=\mu(g)$ the similitude of $g$.
In this section, we study reductions of cubic forms over $K$ by elements of $\tilde \gO_\varphi(K)$.

\subsection{The orthogonal groups in the non-degenerate case}\label{non-degenerate case}
Consider the case that $\varphi$ is non-degenerate. There are two cases:
\[
\text{\bf (N1)}\quad
\begin{pmatrix}
0 & 0 & 0 & 1\\
0 & 0 & 1 & 0\\
0 & 1 & 0 & 0\\
1 & 0 & 0 & 0
\end{pmatrix},
\qquad \text{\bf (N2)}\quad
\begin{pmatrix}
0 & 0 & 0 & 1\\
0 & 1 & 0 & 0\\
0 & 0 & -\epsilon & 0\\
1 & 0 & 0 & 0
\end{pmatrix},
\]
where $\epsilon\in K^\times\smallsetminus(K^\times)^2$.
Let us review briefly
the structure of the orthogonal groups in each case.

\noindent{\bf (N1)} Put
$
\gT = \{\diag(a,b,b^{-1},a^{-1}) \mid a,b\in K^\times\}$ and
$\tilde \gT = \{\diag(a,b,cb^{-1},ca^{-1}) \mid a,b,c\in K^\times\}$,
\[
\gU = \left\{ \left. \begin{pmatrix}1&a&0&0\\0&1&0&0\\0&0&1&-a\\0&0&0&1\end{pmatrix}\begin{pmatrix}1&0&b&0\\0&1&0&-b\\0&0&1&0\\0&0&0&1\end{pmatrix}\right| a,b\in K\right\},
\]
\[
\gA = \left\{1_4, 
\begin{pmatrix}
1&0&0&0\\
0&0&1&0\\
0&1&0&0\\
0&0&0&1
\end{pmatrix}\right\},\quad
s_1 = \begin{pmatrix}
0&1&0&0\\
1&0&0&0\\
0&0&0&1\\
0&0&1&0
\end{pmatrix},\quad
s_2 = \begin{pmatrix}
0&0&1&0\\
0&0&0&1\\
1&0&0&0\\
0&1&0&0
\end{pmatrix}.
\]
Let $\gW:=\{1_4, s_1, s_2, s_1s_2\}$.
Put $\gB = \gA\gT\gU$ and $\tilde \gB = \gA\tilde \gT \gU$.
Recall the Bruhat decomposition
\[
\gO_\varphi(K)=\gB\gW\gU \quad\text{and}\quad \tilde\gO_\varphi(K)=\tilde \gB \gW \gU.
\]
This paper uses only the easy part $\tilde\gO_\varphi(K)\supset \tilde \gB \gW \gU$.
Remark that $\gA$ is introduced so that it is naturally isomorphic to $\gO_\varphi(K)/\SO_\varphi(K)$.


\noindent{\bf (N2)} Put
$\gH=\{\diag(a,1,1,a^{-1})\mid a \in K^\times\}$ and $\gA:=\{1_4, \diag(1,1,-1,1)\}$,
\[
\gU=\left\{\left.\begin{pmatrix}
1 & a & 0 & -a^2/2\\
0 & 1 & 0 & -a\\
0 & 0 & 1 & 0\\
0 & 0 & 0 & 1
\end{pmatrix}
\begin{pmatrix}
1 & 0 & b & b^2/(2\epsilon)\\
0 & 1 & 0 & 0\\
0 & 0 & 1 & b/\epsilon\\
0 & 0 & 0 & 1
\end{pmatrix}
\right| a,b \in K
\right\},
\]
\[
\tilde \gC=\left\{\left.
R(a,b):=\begin{pmatrix}
1 & 0 & 0 & 0\\
0 & a & \epsilon b & 0\\
0 & b & a & 0\\
0 & 0 & 0 & a^2-\epsilon b^2
\end{pmatrix} \right| a,b\in K\right\},\qquad  
\gW:=\left\{1_4, \begin{pmatrix}
0 & 0 & 0 & 1\\
0 & 1 & 0 & 0\\
0 & 0 & -1 & 0\\
1 & 0 & 0 & 0
\end{pmatrix}\right\}.
\]
Let $\gC = \{R(a,b)\in \tilde \gC \mid a^2-\epsilon b^2 = 1\}$
and put $\gT=\gH\gC$, $\tilde \gT= \gH\tilde \gC$,
$\gB=\gA\gT\gU$ and $\tilde \gB = \gA\tilde{\gT}\gU$.
We have the Bruhat decomposition
\[
\gO_\varphi(K) = \gB \gW \gU
\quad\text{and}\quad
\tilde \gO_\varphi(K) = \tilde \gB \gW \gU.
\]
We shall use only the easy part $\tilde\gO_\varphi(K)\supset \tilde \gB \gW \gU$.
Remark that $\gA$ is introduced so that it is naturally isomorphic to $\gO_\varphi(K)/\SO_\varphi(K)$.

The next is a key lemma for
the reduction of cubic forms in the case of (N2).
\begin{lem}\label{RepresentationRotationGroup}
Consider the natural representation of $\tilde \gC$
on the space $V$ of cubics in $y,z$ over $K$.
\begin{enumerate}
\item[\rm (1)]
$V$ is the direct sum of two subrepresentations $V_1:=\langle y(y^2-\epsilon z^2), z(y^2-\epsilon z^2)\rangle $
and $V_2:=\langle y(y^2+3\epsilon z^2), z(3y^2+\epsilon z^2)\rangle$.
\item[\rm (2)]
There are four $\tilde \gC$-orbits in $V_1$, which are the orbits
of $\delta y(y^2-\epsilon z^2)$ with $\delta \in \{0\} \cup K^\times/(K^\times)^3$.
\end{enumerate}
\end{lem}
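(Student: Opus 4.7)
The plan is to diagonalize the $\tilde\gC$-action by the quadratic base change to $K[\sqrt{\epsilon}]$, which is a genuine quadratic extension because $\epsilon\notin(K^\times)^2$. Introducing $u:=y+\sqrt{\epsilon}\,z$ and $v:=y-\sqrt{\epsilon}\,z$, a direct check with the matrix $R(a,b)$ shows that $R(a,b)\cdot u = \chi u$ and $R(a,b)\cdot v = \bar\chi v$, where $\chi:=a+\sqrt{\epsilon}\,b$ and $\bar\chi:=a-\sqrt{\epsilon}\,b$. Consequently the four cubic monomials $u^3,\,u^2v,\,uv^2,\,v^3$ form a simultaneous eigenbasis with eigencharacters $\chi^3,\chi^2\bar\chi,\chi\bar\chi^2,\bar\chi^3$, which are pairwise distinct as functions of $(a,b)$.

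For part (1), I would use the identity $uv = y^2-\epsilon z^2$ to write
\[
y(y^2-\epsilon z^2) = \tfrac12(u^2v+uv^2),\qquad z(y^2-\epsilon z^2) = \tfrac{1}{2\sqrt{\epsilon}}(u^2v-uv^2),
\]
so that $V_1\otimes_K K[\sqrt{\epsilon}]$ is the span of $\{u^2v,uv^2\}$; and similarly
\[
y(y^2+3\epsilon z^2) = \tfrac12(u^3+v^3),\qquad z(3y^2+\epsilon z^2) = \tfrac{1}{2\sqrt{\epsilon}}(u^3-v^3),
\]
so that $V_2\otimes_K K[\sqrt{\epsilon}]$ is the span of $\{u^3,v^3\}$. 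Each $V_i$ is the sum of two eigenlines whose characters are interchanged by $\Gal(K[\sqrt{\epsilon}]/K)$, hence $V_i$ is already defined over $K$, and the distinctness of the four eigencharacters gives $V = V_1\oplus V_2$.

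For part (2), I would work directly over $K$. A general element of $V_1$ is $P=(y^2-\epsilon z^2)(\alpha y+\beta z)$, and substituting the action of $R(a,b)$ transforms the coefficient pair by
\[
(\alpha,\beta)\ \longmapsto\ (a^2-\epsilon b^2)\bigl(a\alpha+b\beta,\ \epsilon b\alpha+a\beta\bigr).
\]
To kill the $z(y^2-\epsilon z^2)$-coefficient when $(\alpha,\beta)\ne(0,0)$, I take $(a,b)=(-\epsilon\alpha,\beta)$; the similitude factor becomes $\epsilon(\epsilon\alpha^2-\beta^2)$, which is nonzero precisely because $\epsilon\notin(K^\times)^2$. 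Hence every nonzero orbit contains an element of the form $\alpha\,y(y^2-\epsilon z^2)$. The stabilizer of the line $K\cdot y(y^2-\epsilon z^2)$ is then cut out by $\epsilon b\alpha=0$, which forces $b=0$, and $R(a,0)$ rescales $\alpha\mapsto a^3\alpha$. Thus the nonzero orbits are in bijection with $K^\times/(K^\times)^3$, and adjoining the zero orbit gives the classification.

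The only delicate point is the reduction step: the nonvanishing of the similitude $a^2-\epsilon b^2$ at the chosen $(a,b)$ is exactly what the hypothesis $\epsilon\notin(K^\times)^2$ (built into case (N2)) is needed for. Everything else is a transparent computation in the eigenbasis or on the coefficients.
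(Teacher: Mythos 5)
Your proof is correct. For part (2) the argument is essentially the paper's: write a general element of $V_1$ as $(y^2-\epsilon z^2)(\alpha y+\beta z)$, apply $R(a,b)$, choose $(a,b)$ to annihilate the $z(y^2-\epsilon z^2)$-component, note that the nonvanishing of the similitude $a^2-\epsilon b^2$ at that choice comes precisely from $\epsilon\notin(K^\times)^2$, and finally observe that the residual stabilizer of the line $K\cdot y(y^2-\epsilon z^2)$ is $\{R(a,0)\}$ and scales by $a^3$. You are slightly more explicit than the paper in spelling out the last (stabilizer) step, which is what ensures the cosets of $(K^\times)^3$ actually index \emph{distinct} orbits. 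For part (1), the paper offers no argument beyond ``Straightforward,'' whereas you give a structural proof: base-change to $K[\sqrt{\epsilon}]$ (a genuine quadratic extension since $\epsilon\notin(K^\times)^2$), diagonalize $R(a,b)$ via $u=y+\sqrt{\epsilon}z$, $v=y-\sqrt{\epsilon}z$ with eigenvalues $\chi=a+\sqrt{\epsilon}b$ and $\bar\chi=a-\sqrt{\epsilon}b$, and identify $V_1\otimes K[\sqrt{\epsilon}]=\langle u^2v,uv^2\rangle$ and $V_2\otimes K[\sqrt{\epsilon}]=\langle u^3,v^3\rangle$ as sums of eigenlines interchanged by the Galois involution. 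That is a nicer way to see the decomposition than a raw coefficient check, and the computations are right (I verified the identities $2y(y^2-\epsilon z^2)=u^2v+uv^2$, etc.). One small remark: the invariance of $V_1$ and $V_2$ already follows from their being sums of eigenlines; the ``pairwise distinct eigencharacters'' observation is not actually needed for the assertion of part (1), so you could drop it. This is cosmetic, not a gap.
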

\begin{proof}
(1) Straightforward.
(2) $R(a,b)$ sends $y(y^2-\epsilon z^2)$ to 
$(a^2-\epsilon b^2)\{a y(y^2-\epsilon z^2)+\epsilon bz(y^2-\epsilon z^2)\}$.
It suffices to show that for any $\alpha, \beta\in K$ there exist $a, b\in K$ and $\delta$ as above such that $\alpha = \delta (a^2-\epsilon b^2) a$
and $\beta = \epsilon\delta (a^2-\epsilon b^2)b$.
If $\beta = 0$, then we have $b=0$ and $\alpha = \delta a^3$,
whence there exist such $a,b$ and $\delta$.
If $\beta \ne 0$, we have $a = (\epsilon \alpha/\beta)b$
and therefore
$\alpha = \delta \lambda b^3$ for some $\lambda\in K$ not containing $a,b$.
There exist $b, \delta$ satisfying this equation. Putting $a = (\epsilon \alpha/\beta)b$, we have the desired $a,b$ and $\delta$.
\end{proof}

\subsection{The orthogonal groups in the degenerate case}\label{degenerate case}
Consider when $\varphi$ is degenerate:
\[
\varphi = \begin{pmatrix}
0 & 0 & 0 & 0\\
0 & 0 & 0 & 1\\
0 & 0 & -\epsilon & 0\\
0 & 1 & 0 & 0
\end{pmatrix}
\]
where $\epsilon\in K^\times$ is a representative of $K^\times/(K^\times)^2$.
Put $\gA:=\{1_4, \diag(1,1,-1,1)\}$,
\[
\gT:=\left\{\left. T(a):=\begin{pmatrix}
1&0&0&0\\
0&a&0&0\\
0&0& 1&0\\
0&0&0&a^{-1}\end{pmatrix} \right| a \in K^\times\right\},\quad
\gU := \left\{\left. U(a):=\begin{pmatrix}
1&0&0&0\\
0&1&a&a^2(2\epsilon)^{-1}\\
0&0&1&a \epsilon^{-1}\\
0&0&0&1\end{pmatrix} \right| a \in K\right\},
\]
\[
s:=\begin{pmatrix}
1&0&0&0\\
0&0&0&1\\
0&0&1&0\\
0&1&0&0\end{pmatrix},\quad
\gV=\left\{\left.\begin{pmatrix}
a&b&c&d\\
0&1&0&0\\
0&0&1&0\\
0&0&0&1\end{pmatrix} \right| a\in K^\times \text{ and } b, c, d\in K\right\}.
\]
Let $\tilde \gT := \{\diag(1,b,b,b) \mid b\in K^\times\} \gT$ and set $\gB:=\gA \gT \gU$ and $\tilde \gB := \gA \tilde \gT \gU$. We have
\begin{lem}
\begin{enumerate}
\item[\rm (1)] $\gO_\varphi(K) = (\gB \sqcup \gB s \gU) \gV$.
\item[\rm (2)]
$\tilde \gO_\varphi(K) = (\tilde \gB \sqcup \tilde \gB s \gU) \gV$.
\end{enumerate}
\end{lem}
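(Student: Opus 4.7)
The plan is to realize the short exact sequence
\[
1 \longrightarrow \gV \longrightarrow \gO_\varphi(K) \longrightarrow \gO_{\bar\varphi}(K) \longrightarrow 1,
\]
where $\bar\varphi$ denotes the non-degenerate three-dimensional form $2yw-\epsilon z^2$ induced on $K^4/Ke_1$ (with $e_1$ spanning the radical of $\varphi$), and then to lift the Bruhat decomposition of the rank-one orthogonal group $\gO_{\bar\varphi}(K)$ to $\gO_\varphi(K)$.

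First I would verify that every $g\in\gV$ satisfies ${}^t g\varphi g=\varphi$ by a direct computation: because the first column of $\varphi$ vanishes and the remaining columns of $g$ are $e_2,e_3,e_4$, one sees $\varphi g=\varphi$, and then ${}^t g\varphi g={}^t g\varphi=\varphi$ by the same observation applied to rows. Conversely, any $g\in\gO_\varphi(K)$ must preserve the radical $Ke_1$, so it has block shape $\begin{pmatrix}a & *\\ 0 & M\end{pmatrix}$ with $a\in K^\times$, and the relation ${}^t g\varphi g=\varphi$ forces $M\in\gO_{\bar\varphi}(K)$ while leaving the top row unconstrained. The block-diagonal lift $M\mapsto \begin{pmatrix}1&0\\0&M\end{pmatrix}$ shows the projection is surjective with kernel exactly $\gV$; in particular $\gV$ is normal, hence $\gV X=X\gV$ for every subset $X\subseteq\gO_\varphi(K)$.

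Next I would establish the Bruhat decomposition $\gO_{\bar\varphi}(K)=\bar\gB\sqcup\bar\gB\bar s\bar\gU$ for the three-dimensional orthogonal group, where the bars denote the images of $\gB$, $s$, $\gU$ under the projection. Since $\SO_{\bar\varphi}$ has rank one, the Weyl group is of order $2$ and the only case distinction is whether a given element preserves or swaps the two isotropic lines in the quotient; this reduces to a single explicit matrix identity using the generators of $\bar\gT\bar\gU$. The blocks $\gA\gT\gU$ and $s$ already consist of lifts that are block-diagonal with a $1$ in the $(1,1)$ position, so they map bijectively onto the two Bruhat cells. Combining this with the normality of $\gV$ gives $\gO_\varphi(K)=(\gB\sqcup\gB s\gU)\cdot \gV$, and the two summands remain disjoint because their images in $\gO_{\bar\varphi}(K)$ already are.

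For part (2) the same scheme works once I extend the exact sequence to $1\to\gV\to\tilde\gO_\varphi(K)\to\tilde\gO_{\bar\varphi}(K)\to 1$; the block-diagonal lift of part (1) preserves the similitude factor and so remains surjective. The additional torus $\tilde\gT/\gT=\{\diag(1,b,b,b):b\in K^\times\}$ projects to the central scalars $b\cdot I_3\subseteq\tilde\gO_{\bar\varphi}(K)$, and from $\det({}^t g\varphi g)=\mu^3\det\varphi$ in the three-dimensional quotient one deduces $\mu\in (K^\times)^2$ for every similitude, so these scalars together with $\gO_{\bar\varphi}(K)$ exhaust $\tilde\gO_{\bar\varphi}(K)$. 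Thus $\tilde\gO_{\bar\varphi}(K)=\tilde{\bar\gB}\sqcup\tilde{\bar\gB}\bar s\bar\gU$, and lifting yields the claimed decomposition. The main obstacle is the rank-one Bruhat decomposition for $\gO_{\bar\varphi}(K)$ itself, since it requires checking that $\bar\gA$ normalizes $\bar\gU$ and that $\bar s$ represents the nontrivial Weyl element; but because the reductive rank is one this all reduces to a finite explicit computation.
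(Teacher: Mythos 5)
Your proposal is correct and follows essentially the same route as the paper: both arguments exploit the block upper-triangular shape forced on any element of $\gO_\varphi(K)$ (or $\tilde\gO_\varphi(K)$) by preservation of the radical $Ke_1$, factor such an element as a block-diagonal lift of a $3\times 3$ orthogonal matrix times an element of $\gV$, invoke the Bruhat decomposition of the rank-one group $\gO_\psi(K)$, and for part (2) use the determinant identity $\det(\delta)^2=\mu(\delta)^3$ to conclude $\mu\in(K^\times)^2$. The short-exact-sequence framing and the explicit remark on normality of $\gV$ are presentational flourishes on the same factorization the paper writes down directly.
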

\begin{proof}
It is straightforward that the right hand sides are contained in the left hand sides respectively.
Let $g\in \tilde \gO_\varphi(K)$. Write $\varphi=\begin{pmatrix}0 & 0 \\ 0 & \psi\end{pmatrix}$ and $g=\begin{pmatrix}\alpha & \beta\\ \gamma & \delta\end{pmatrix}$,
where $\psi$ and $\delta$ are square matrices  of degree $3$.
From the condition ${}^t g\varphi g = \mu \varphi$ with $\mu\in K^\times$, we have $\gamma=0$.
Hence
\[
g = \begin{pmatrix}1 & 0\\0&\delta\end{pmatrix}
\begin{pmatrix}\alpha & \beta\\0&1_3\end{pmatrix}
\quad\text{with}\quad\begin{pmatrix}\alpha & \beta\\0&1_3\end{pmatrix}\in \gV.
\]

(1) If $g \in \gO_\varphi(K)$, then
$\delta \in \gO_\psi(K)$.
It is also straightforward to check the Bruhat decomposition
that if $\delta$ is upper-triangular then $\begin{pmatrix}1 & 0\\0&\delta\end{pmatrix}\in \gB$ and otherwise $\begin{pmatrix}1 & 0\\0&\delta\end{pmatrix}\in \gB s \gU$.

(2) If $g \in \tilde \gO_\varphi(K)$, then $\delta\in\tilde \gO_\psi(K)$, i.e., ${}^t\delta\psi\delta=\mu(\delta)\psi$
with $\mu(\delta)\in K^\times$. Considering the determinants of the both sides,
we have $\det(\delta)^2 = \mu(\delta)^3$.
This implies $\mu(g)\in (K^\times)^2$, since $\mu(g)=\mu(\delta) = (\det(\delta)\mu(\delta)^{-1})^2 $. 
Choose $b\in K^\times$ with $b^2=\mu(g)$ and put $t=\diag(1,b,b,b)\in \tilde \gT$.
Clearly $t^{-1}g$ belongs to $\gO_\varphi(K)$. Then (2) follows from (1).
\end{proof}


\subsection{Reduction of cubic forms in the case of (N1)}
Assume $p\ne 2$.
Let $P=P(x,y,z,w)$ be an irreducible cubic form in $x,y,z,w$ over $K$.
Consider the case of (N1): $Q=2xw+2yz$.
We use the notation in Section \ref{non-degenerate case} (N1).
We consider a reduction of $P$
by an element of $\tilde \gO_\varphi(K)$
under the assumption:
\begin{quote}
{\bf (A1)}\quad $C$ has a $K$-rational point where 
\[
w = 1,\quad R_y(y,z):=\dfrac{\partial}{\partial y}P(-yz,y,z,1) \ne 0
\quad\text{and}\quad
R_z(y,z):=\dfrac{\partial}{\partial z}P(-yz,y,z,1) \ne 0.
\]
\end{quote}
We shall see that this assumption is satisfied if $C$ has sufficiently many $K$-rational points, see Remark \ref{Remark A1} (1) below.

\begin{enumerate}
\item[1.] Considering $\modulo Q$, we may consider only $P$ which does not have any term containing $xw$.
\begin{eqnarray}\label{N1_GeneralFormOfP}
P &= & a_1 x^3 + (a_2y+a_3z)x^2+(a_4y^2 + a_5yz + a_6z^2)x \nonumber\\
&& + a_7y^3 + a_8y^2 z + a_9y z^2 + a_{10}z^3\\
&& + (a_{11}y^2+a_{12}yz+a_{13}z^2)w + (a_{14}y+a_{15}z)w^2 + a_{16}w^3. \nonumber
\end{eqnarray}
\item[2.] For a rational point $(-bc, b, c, 1)$ of $C(K)$, we have an element of $\gO_\varphi(K)$
\[
\begin{pmatrix}
-bc & -b & -c & 1\\
b & 0 & 1 & 0\\
c & 1 & 0 & 0\\
1 & 0 & 0 & 0
\end{pmatrix},
\]
which transforms $P$ into a cubic, say $P'$.
Let $a_1',\ldots,a'_{16}$ be the coefficients of $P'$ as in \eqref{N1_GeneralFormOfP}.
One can check that the $x^3$-coefficient of $P'$ is $P(-bc,b,c,1)$
and the $x^2y$-coefficient of $P'$ is $R_z(b,c)$ 
and the $x^2z$-coefficient of $P'$ is $R_y(b,c)$. 
By assumption (A1), there exists $(b, c)$ such that $a'_1 = 0, a'_2\ne 0, a'_3\ne 0$.
Thus we may assume that $P$ has
$a_1 = 0, a_2\ne 0, a_3\ne 0$.
\item[3.] By the action of an element of $\gU$, we can eliminate the terms of $xy^2, xz^2$ from $P$.
Thus we may assume that $P$ has
$a_1 = 0, a_2\ne 0, a_3\ne 0, a_4=0, a_6=0$.
\item[4.] Composing some element ($y\mapsto cy, z\mapsto z/c$) of $\gT$ and some constant-multiplication to the whole $P$, we transform $P$ into a cubic where
\begin{enumerate}
\item[(i)]
the $y^3$-coefficient is $1$ and the $y^2z$-coefficient is $0$
or a representative of an element of $K^\times/(K^\times)^2$, or
\item[(ii)] the $y^3$-coefficient is $0$;
considering $y\leftrightarrow z$, we may assume that the $z^3$-coefficient is also $0$;
the $y^2z$-coefficient is $0$ or $1$;
and the $yz^2$-coefficient is $0$ or a representative of an element of $K^\times/(K^\times)^2$

\end{enumerate}
\item[5.] An element $(x\mapsto dx, w\mapsto w/d)$ of $\gT$ transforms $P$
into one whose coefficient of $z^2w$ is $0$ or $1$.
\end{enumerate}
\begin{lem}\label{ReductionLemmaN1}
Under assumption {\rm (A1)}, an element of $\tilde\gO_\varphi(K)$ transforms $P$ into
\begin{enumerate}
\item[\rm (i)]for $a_i\in K$
with $a_1\ne 0$, $a_2\ne 0$ 
and for $b_1\in\{0\}\cup K^\times/(K^\times)^2$ and $b_2\in\{0,1\}$,
\begin{eqnarray*}
&& (a_1 y + a_2 z)x^2  + a_3 yzx + y^3 + a_4 z^3 + b_1 y^2z + a_5 yz^2\\
&&  + (a_6 y^2 + a_7 yz + b_2 z^2)w + (a_8y + a_9z)w^2 + a_{10}w^3, \text{or}
\end{eqnarray*}
\item[\rm (ii)]for $a_i\in K$ with $a_1\ne 0$, $a_2\ne 0$ 
and for $b_1\in\{0,1\}$, $b_2\in\{0\}\cup K^\times/(K^\times)^2$ and $b_3\in\{0,1\}$,
\begin{eqnarray*}
&& (a_1 y + a_2 z)x^2  + a_3 yzx  + b_1 y^2z + b_2 yz^2\\
&&  + (a_4 y^2 + a_5 yz + b_3 z^2)w + (a_6 y + a_7 z)w^2 + a_8 w^3.
\end{eqnarray*}
\end{enumerate}
\end{lem}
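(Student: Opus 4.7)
The plan is to carry out the five reduction steps already sketched before the lemma statement, tracking at each stage which coefficients of $P$ can be forced to take a prescribed form and which remain free. The starting observation is that the orbit of $P$ under $\tilde{\gO}_\varphi(K)$ contains representatives modulo $Q = 2xw + 2yz$, so I may work with the $15$-term expression in \eqref{N1_GeneralFormOfP} obtained by deleting every monomial divisible by $xw$.

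First I would use assumption (A1) to kill the $x^3$-coefficient. A $K$-rational point $(-bc, b, c, 1) \in C(K)$ gives the explicit element of $\gO_\varphi(K)$ written out in Step~2. Direct substitution shows that after this transformation the new coefficient $a_1'$ equals $P(-bc,b,c,1)$, which vanishes because the point lies on $C$, while $a_2'$ and $a_3'$ become $R_z(b,c)$ and $R_y(b,c)$, both nonzero by (A1). Next I would apply the two one-parameter subgroups of $\gU$ described in Section~\ref{non-degenerate case}: they only affect the $x$-linear and $x$-quadratic parts in a predictable way and can be used to eliminate the coefficients of $xy^2$ and $xz^2$ (i.e.\ force $a_4 = a_6 = 0$) without disturbing $a_1 = 0$ or the nonvanishing of $a_2, a_3$.

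At this point the remaining freedom comes from $\tilde\gT$, the swap in $\gA$, and overall scalar multiplication (which is allowed because we work inside $\tilde\gO_\varphi(K)$). Using an element $\diag(1,c,c^{-1},1) \in \gT$ together with a global scalar, the cubic part in $y,z$ can be normalized: if the $y^3$-coefficient is nonzero, rescale so it equals $1$, and then use the square-class freedom in $c$ to push the $y^2 z$-coefficient into $\{0\} \cup K^\times/(K^\times)^2$; this gives case (i). If instead the $y^3$-coefficient vanishes, I may additionally apply $y \leftrightarrow z$ (which lies in $\gA$) to arrange that the $z^3$-coefficient also vanishes, then normalize the $y^2 z$-coefficient to $\{0,1\}$ and the $yz^2$-coefficient to $\{0\} \cup K^\times/(K^\times)^2$, producing case (ii). Finally an element $\diag(d,1,1,d^{-1}) \in \gT$ leaves everything normalized so far intact and acts on the $z^2 w$-coefficient by a nonzero scaling, so it can be forced into $\{0,1\}$.

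The main thing to verify carefully is that each reduction step only uses symmetries preserving the already-achieved normalizations, so that the list of residual free parameters $a_1,\ldots,a_{10}$ (together with the discrete invariants $b_i$) really is the output. Steps~1 and~3 are automatic because $\gU$ acts trivially on the $w$-coefficients modulo $Q$; the delicate bookkeeping lies in Step~4, where I have to check that the diagonal torus and the global scalar act independently enough on the degree-$3$ part in $y,z$ to realize the claimed normal forms, and that the bifurcation into (i) and (ii) is exhaustive. Once this is checked, assembling the steps yields exactly the two normal forms displayed in the lemma.
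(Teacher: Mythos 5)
Your proposal reproduces the paper's own five-step reduction (points 1–5 preceding the lemma) and is mathematically correct: it uses the same rational point to clear the $x^3$-term, the same $\gU$-action to kill $xy^2$ and $xz^2$, and the same $\gT$- and scalar-actions with the $y\leftrightarrow z$ swap in $\gA$ to obtain the bifurcation into normal forms (i) and (ii). The only cosmetic slips are calling \eqref{N1_GeneralFormOfP} a ``$15$-term'' expression (it has $16$ coefficients) and the slightly garbled phrasing about the swap ``arranging'' the $z^3$-coefficient to vanish (as in the paper, the point is that if $z^3\ne 0$ the swap lands you in case (i), so case (ii) may assume both cubes vanish); neither affects the argument.
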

\begin{rem}\label{Remark A1}
\begin{enumerate}
\item[(1)]
(A1) is satisfied at least if 
$|C(K)| > 36$. Indeed consider the cubic
\[
S_y: w\dfrac{\partial}{\partial y}P\left(-\frac{yz}{w},y,z,1\right) = P_x(x,y,z,w)(-z) + P_y(x,y,z,w)w=0.
\]
The number of points (with multiplicity) of $C\cap S_y$ is $3\cdot\deg(C)=18$.
Since $C\cap S_y \cap V(w)$ has at least $3$ points (with multiplicity),
the number of the points of $C$ with $w=1$ and
$R_y(y,z) = 0$
(resp. $R_z(y,z) = 0$) is at most $15$. 
Also the number of the points with $w=0$ is at most $6$ (with multiplicity).
\item[(2)]
(A1) is a technical assumption to reduce cases.
But the existence of a rational point is crucial to this reduction step
as it allows us to eliminate the term of $x^3$.
\end{enumerate}
\end{rem}

\subsection{Reduction of cubic forms in the case of (N2)}
Assume $p\ne 2, 3$.
Let $P=P(x,y,z,w)$ be an irreducible cubic form in $x,y,z,w$.
Consider the case of (N2): $Q=2xw + y^2 - \epsilon z^2$ with
$\epsilon \not\in(K^\times)^2$.
We use the notation in Section \ref{non-degenerate case} (N2).
We consider a reduction of $P$
by an element of $\tilde \gO_\varphi(K)$,
under the assumption
\begin{quote}
{\bf (A2)} $C$ has a $K$-rational point where 
\[
w = 1,\quad R_y(y,z):=\dfrac{\partial}{\partial y}P\left(-\dfrac{y^2 -\epsilon z^2}{2},y,z,1\right) \ne 0,
\quad
R_z(y,z):=\dfrac{\partial}{\partial z}P\left(-\dfrac{y^2 -\epsilon z^2}{2},y,z,1\right) \ne 0.
\]
\end{quote}
We shall see that this assumption is satisfied if $C$ has sufficiently many $K$-rational points, see Remark \ref{Remark A2} (1) below.
\begin{enumerate}
\item[1.] Considering $\modulo Q$, we may consider only $P$ which does not have any term containing $xw$,
see \eqref{N1_GeneralFormOfP}.
\item[2.] For a rational point $(-(b^2-\epsilon c^2)/2, b, c, 1)$ of $C(K)$, we have an element of $\gO_\varphi(K)$
\[
\begin{pmatrix}
-(b^2-\epsilon c^2)/2 & -b & \epsilon c & 1\\
b & 1 & 0 & 0\\
c & 0 & 1 & 0\\
1 & 0 & 0 & 0
\end{pmatrix},
\]
which transforms $P$ into a cubic, say $P'$.
Let $a_1',\ldots,a'_{16}$ be the coefficients of $P'$ as in \eqref{N1_GeneralFormOfP}.
The $x^3$-coefficient of $P'$ is $P(-(b^2-\epsilon c^2)/2,b,c,1)$
and the $x^2y$-coefficient of $P'$ is $R_y(b,c)$
and the $x^2z$-coefficient of $P'$ is $R_z(b,c)$.
By assumption (A2), there exists $(b, c)$ such that $a'_1 = 0, a'_2\ne 0, a'_3\ne 0$.
Thus we may assume that $P$ has $a_1 = 0, a_2\ne 0, a_3\ne 0$.
\item[3.] By the action of an element of $\gU$, we can transform $P$ into
a cubic form whose $x^1$-coefficient is a constant multiplication of 
$(y^2-\epsilon z^2)$, where $p\ne 3$ is necessary.
\item[4.] Considering the action of an element of $\tilde \gC$ and a constant-multiplication to the whole,
$P$ can be transformed into a cubic whose part of cubic form in $y,z$ is of the following form
\[
\alpha y(y^2-\epsilon z^2) + \beta y(y^2+3\epsilon z^2) + \gamma z(3y^2+\epsilon z^2)
\]
for some $\alpha\in\{0,1\}$ and $\beta,\gamma \in K$. Use Lemma \ref{RepresentationRotationGroup}.

\item[5.] An element $(x\mapsto cx, w\mapsto w/c)$ of $\gH$ transforms $P$ into a cubic
with $z^2w$-term $0$ or $1$.
\end{enumerate}

\begin{lem}\label{ReductionLemmaN2}
Under assumption {\rm (A2)}, an element of $\tilde\gO_\varphi(K)$ transforms $P$ into
\begin{eqnarray*}
&& (a_1 y + a_2 z)x^2  + a_3 (y^2-\epsilon z^2)x
+ b_1 y(y^2-\epsilon z^2) + a_4 y(y^2+3\epsilon z^2) + a_5 z(3y^2+\epsilon z^2)\\
&&  + (a_6 y^2 + a_7 yz + b_2 z^2)w + (a_8 y + a_9 z)w^2 + a_{10}w^3
\end{eqnarray*}
for $a_i\in K$ with $(a_1,a_2)\ne (0,0)$
and for $b_1,b_2\in\{0,1\}$.
\end{lem}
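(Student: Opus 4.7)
The plan is to execute, in sequence, the five reduction steps sketched immediately before the lemma, using the decomposition $\tilde{\gO}_\varphi(K)\supset \tilde{\gB}\gW\gU$ from Section \ref{non-degenerate case} (N2) together with overall constant rescalings of $P$, which do not alter the variety $V(P,Q)$.

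First I would put $P$ in the expanded form \eqref{N1_GeneralFormOfP} by subtracting a suitable multiple of $Q$ to kill every monomial containing $xw$. For step 2, by hypothesis (A2) we have a rational point of the form $(-(b^2-\epsilon c^2)/2, b, c, 1)\in C(K)$ at which $R_y(b,c)$ and $R_z(b,c)$ are both nonzero. I would then verify directly that the displayed $4\times 4$ matrix belongs to $\gO_\varphi(K)$ (this uses only the identity $Q(-(b^2-\epsilon c^2)/2,b,c,1)=0$) and compute that its substitution sends the coefficients of $x^3$, $x^2y$, $x^2z$ to $P(-(b^2-\epsilon c^2)/2,b,c,1)=0$, $R_y(b,c)$, $R_z(b,c)$ respectively, yielding $a_1=0$ and $a_2,a_3\ne 0$.

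For step 3, I would compute the effect of a generic element of the two-parameter unipotent group $\gU$ on the $x$-linear part $a_4y^2+a_5yz+a_6z^2$ of $P$. Since $a_2,a_3$ are nonzero, the two $\gU$-parameters produce two independent shifts of these coefficients, and combined with the relation $y^2-\epsilon z^2\equiv -2xw\pmod{Q}$, this is exactly enough freedom to force the $x$-linear part to be a scalar multiple of $y^2-\epsilon z^2$. The hypothesis $p\ne 3$ enters here because a factor of $3$ arising from the cubic expansion under the unipotent substitution must be invertible. For step 4, the pure cubic form in $y,z$ decomposes under $\tilde{\gC}$ as $V_1\oplus V_2$ by Lemma \ref{RepresentationRotationGroup}(1); its $V_1$-component is moved into $\{0\}\cup K^\times\cdot y(y^2-\epsilon z^2)$ by part (2) of that lemma, and a global rescaling of $P$ by a constant then normalizes the nonzero case to $b_1=1$. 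A direct check that $\tilde{\gC}$ sends $y^2-\epsilon z^2$ to $(a^2-\epsilon b^2)(y^2-\epsilon z^2)$ ensures step 4 does not undo step 3. Finally, step 5 uses $\diag(c,1,1,c^{-1})\in\gH\subset\tilde\gT$, which multiplies the $z^2w$ coefficient by $c^{-1}$ without affecting the pure $y,z$-cubic part, to bring $b_2$ into $\{0,1\}$.

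The main obstacle is the bookkeeping in step 3: one must carefully track how $\gU$ modifies all $x$-linear coefficients and verify that, for arbitrary input with $a_2,a_3\ne 0$, the resulting two-variable system in the $\gU$-parameters is solvable exactly under $p\ne 3$. Once this is settled, the remaining steps are direct substitutions, and the resulting form matches the displayed normalization in the lemma.
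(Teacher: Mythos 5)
Your proposal is correct and follows exactly the paper's own argument: the paper's ``proof'' of Lemma~\ref{ReductionLemmaN2} consists precisely of the five numbered reduction steps immediately preceding the lemma, using the subgroup $\tilde\gB\gW\gU\subset\tilde\gO_\varphi(K)$ and constant rescalings of $P$, and you have reproduced that outline faithfully, including the use of the rational point from (A2), the two-parameter unipotent adjustment of the $x$-linear quadratic, and the invocation of Lemma~\ref{RepresentationRotationGroup} for the pure cubic part. The only minor imprecision is in your step~3 comment: the congruence $y^2-\epsilon z^2\equiv -2xw \pmod Q$ is not what makes the unipotent system solvable; what matters is that the $2\times 2$ linear system in the two $\gU$-parameters has coefficient matrix with determinant $4(a_3^2-\epsilon a_2^2)$, nonzero because $\epsilon$ is a nonsquare and $(a_2,a_3)\ne(0,0)$ after step~2 — but this does not affect the correctness of the overall argument.
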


\begin{rem}\label{Remark A2}
\begin{enumerate}
\item[(1)] (A2) is satisfied provided
$|C(K)| > 37$.
Indeed consider the cubic
\[
S_y: w \frac{\partial}{\partial y}P\left(-\frac{y^2-\epsilon z^2}{2w},y,z,1\right)
= P_x(x,y,z,w)(-2y) + P_y(x,y,z,w)w = 0.
\]
As the number of $C\cap S_y$ is at most $18$,
the number of the points of $C$ with $w=1$ and
$R_y(y,z) = 0$
is at most $18$.
The same thing holds for
$R_z(y,z) = 0$.
Also the number of $K$-rational points with $w=0$ is at most one, since they satisfy $y^2-\epsilon z^2=0$ and therefore $y=z=0$.
\item[(2)]
For (A2) we remark the same thing as Remark \ref{Remark A1} (2).
\end{enumerate}
\end{rem}

\subsection{Reduction of cubic forms in the degenerate case}\label{Orthogonal group in the degenerate case}
Assume $p\ne 2,3$.
Let $P=P(x,y,z,w)$ be an irreducible cubic form in $x,y,z,w$.
The form $2yw-\epsilon z^2$ is equivalent to 
$2yw + z^2$ via $(x\mapsto x, y\mapsto y, z\mapsto z, w\mapsto -\epsilon w)$.
Set $Q = 2yw + z^2$.
We use the notation in Section \ref{degenerate case}
with $\epsilon=-1$.

If the coefficient of $x^3$ in $P$ is zero,
the point $(x,y,z,w)=(1,0,0,0)$ is on $C$ and $C$ is singular at the point.
Hence we may assume that the coefficient of $x^3$ in $P$ is not zero.

Let $K$ be a field with $\sharp K > 5$.
\begin{enumerate}
\item[1.] An element ($x\mapsto x+ay+bz+cw$) of $\gV$
transforms $P$  into a cubic without
terms of $x^2y$, $x^2z$, $x^2w$. We may assume that $P$ does not contain terms of $x^2y$, $x^2z$, $x^2w$.
\item[2.] Since $yw\equiv -2^{-1}z^2 \modulo Q$,
we may assume that $P$ does not have any term containing $yw$.
\item[3.]
We claim that there exists an element of $\gO_\varphi(K)$ stabilizing $x$
which transforms $P$ into $P'$
with non-zero term of $y^3$. Indeed
otherwise we may assume that the coefficients of $y^3, w^3$ are zero;
write $P(x=0)=\alpha y^2z + \beta yz^2 + \gamma z^3 + \lambda z^2w + \mu zw^2$;
then the $y^3$-coefficient of the cubic obtained by transforming $P$ by
an element of $s \gU s$, say $z\mapsto z-cy$, $w\mapsto w + cz - 2^{-1}c^2y$ for $c\in K$,
is
\[
-\alpha c + \beta c^2 -\gamma c^3 -(2^{-1}\lambda) c^4
-(4^{-1}\mu) c^5.
\]
If the values were zero for all $c\in K^\times$, then $\alpha,\beta,\gamma,\lambda,\mu=0$ must hold,
since $\sharp K^\times \ge 5$.
Hence $P'$ for some $c$ has non-zero $y^3$-term.
Then an element of $U$ transforms $P'$ into one without $y^2z$-term.
\item[4.] Composing an element ($y\mapsto cy, w\mapsto w/c$) of $\gT$ and a constant multiplication to the whole $P$,
the coefficients of $z^2w$ and $zw^2$ becomes $0, 1$.
\item[5.] By $x\mapsto d\cdot x$ for some $d\in K^\times$, we transform $P$ into a cubic
in which the leading coefficient of the coefficient $R$ of $x$ is one or $R=0$.
\end{enumerate}
\begin{lem}\label{ReductionLemmaDegenerate}
Assume $\sharp K > 5$.
An element of $\tilde\gO_\varphi(K)$ transforms $P$ into the following form
\begin{eqnarray*}
&& a_0x^3 + (a_1 y^2 + a_2 z^2 + a_3 w^2 + a_4 yz + a_5 zw)x\\
&& + a_6y^3 + a_7z^3 + a_8 w^3 + a_9yz^2 + b_1 z^2 w + b_2 zw^2
\end{eqnarray*}
for $a_i\in K$ and $b_1,b_2\in\{0,1\}$,
where $a_0, a_6\in K^\times$ and
the leading coefficient of $R:=a_1 y^2 + a_2 z^2 + a_3 w^2 + a_4 yz + a_5 zw$ is $1$ or $R=0$.
\end{lem}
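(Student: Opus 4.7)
The plan is to apply, in order, the five reductions enumerated in the text just before the statement, using respectively an element of $\gV$ (Step 1), substitution modulo $Q$ (Step 2), certain elements of $\gO_\varphi(K)$ (Step 3), an element of $\tilde\gT$ together with an overall rescaling of $P$ (Step 4), and finally a diagonal element of $\gV$ (Step 5). Before starting, observe that the condition $a_0\ne 0$ in the final form is forced by non-singularity: $Q=2yw+z^2$ has its unique singular point at $(1,0,0,0)$, so if the $x^3$-coefficient of $P$ were zero then $(1,0,0,0)$ would lie on $C$ and be a singular point of $C$, contradicting the hypothesis that $C$ is a curve.

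For Step 1, an element of $\gV$ of the form $x\mapsto x+\alpha y+\beta z+\gamma w$ shifts the coefficients of $x^2y,\,x^2z,\,x^2w$ by $3a_0\alpha,\,3a_0\beta,\,3a_0\gamma$ respectively; since $p\ne 3$ and $a_0\ne 0$ we solve uniquely for $\alpha,\beta,\gamma$ to kill all three. Step 2 uses $yw\equiv -z^2/2\pmod Q$ to remove every $yw$-containing monomial of $P$, and since $yw$ has no $x$-factor, the reductions of Step 1 are preserved. Step 3 is the crux: composing with the swap $y\leftrightarrow w$ (the element $s\in\gO_\varphi(K)$), which preserves Steps 1--2 by symmetry, reduces us to the case that both the $y^3$- and $w^3$-coefficients of $P$ vanish. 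Writing then $P(0,y,z,w)=\alpha y^2z+\beta yz^2+\gamma z^3+\lambda z^2w+\mu zw^2$, we transform $P$ by the element of $s\gU s$ sending $x\mapsto x,\,y\mapsto y,\,z\mapsto z-cy,\,w\mapsto w+cz-c^2y/2$ (one checks directly that this stabilizes $Q$); a brute expansion shows the new $y^3$-coefficient equals
\[
-\alpha c+\beta c^2-\gamma c^3-\tfrac{\lambda}{2}c^4-\tfrac{\mu}{4}c^5.
\]
Dividing out $c$, we obtain a polynomial in $c$ of degree at most four, so under the hypothesis $\sharp K>5$ (which gives $|K^\times|\ge 5$), if the expression vanished for every $c\in K^\times$ then all of $\alpha,\beta,\gamma,\lambda,\mu$ would be zero, forcing $x\mid P$ and contradicting irreducibility. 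Thus some $c$ yields $P'$ with nonzero $y^3$-coefficient, and subsequently an element $U(a)\in\gU$ alters the $y^2z$-coefficient by $3a$ times the $y^3$-coefficient (with $p\ne 3$), which we use to kill the $y^2z$-term.

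Steps 4 and 5 are routine. In Step 4, $T(c)\in\gT$ scales the coefficients of $z^2w$ and $zw^2$ by $c^{-1}$ and $c^{-2}$ respectively; combining with an overall rescaling of $P$ by a constant $\lambda$ (corresponding to the global factor $\{\diag(1,b,b,b)\}\subset\tilde\gT$), a straightforward case analysis according to whether each of the two coefficients is zero normalizes them to lie in $\{0,1\}$. In Step 5, the diagonal element $x\mapsto dx$ of $\gV$ rescales the coefficient $R(y,z,w)$ of $x$ by $d$ without touching any term free of $x$, so $d$ can be chosen to normalize the leading coefficient of $R$ to $1$ unless $R=0$. The main obstacle is Step 3: one must exhibit an orthogonal element flexible enough to alter the $y^3$-coefficient while respecting the normalizations of Steps 1--2, verify it stabilizes $Q$, and then execute the polynomial argument over $K$ --- the hypothesis $\sharp K>5$ is exactly what makes this argument go through, and the remaining steps consist only of direct matrix computations.
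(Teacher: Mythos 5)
Your proof follows the same five-step reduction enumerated in Section 4.5 of the paper, with the same transformations (an element of $\gV$ to kill $x^2y,x^2z,x^2w$; reduction modulo $Q$ via $yw\equiv -z^2/2$; the $s$-swap and $s\gU s$-substitution with the degree-$\le 4$ polynomial argument exploiting $\sharp K^\times\ge 5$; a $\gT$-element plus overall rescaling; and finally $x\mapsto dx$), and the computation of the $y^3$-coefficient matches exactly. Your additional remark that $\alpha=\beta=\gamma=\lambda=\mu=0$ forces $x\mid P$ and contradicts irreducibility simply makes explicit a step the paper leaves implicit; the approach is the same.
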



\section{Main results}\label{sec:main_results}

Now we state our main results and prove them.
We put computational parts of the proofs together in Section \ref{subsec:comp_result}.
We choose an element $\epsilon$ of $\F_q$ for $q = 25$ and $49$ such that $-\epsilon$ is a generator of the cyclic group $\F_q^\times$, and fix it throughout this section.

\subsection{Superspecial curves}\label{subsec:sscurves}

\begin{theor}\label{MainTheorem}
Any superspecial curve of genus $4$ over $\F_{25}$ is $\F_{25}$-isomorphic to
\begin{equation}
2yw + z^2=0,\qquad x^3 + a_1 y^3 + a_2 w^3 + a_3 zw^2=0 \label{sscurve_25}
\end{equation}
in $\bbP^3$, where $a_1, a_2 \in \mathbb{F}_{25}^\times$ and $a_3\in \F_{25}$.
\end{theor}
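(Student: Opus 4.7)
The plan is to execute the three-step strategy outlined in the introduction: normalize the defining quadric $Q$, reduce the cubic $P$ by elements of $\tilde\gO_\varphi(\F_{25})$, and then impose the vanishing of the Hasse--Witt matrix via Corollary \ref{prop:HW} to obtain a polynomial system that we solve by the hybrid/Gr\"obner basis method.

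First I would observe that $g=4>(5^2+1)/(2\cdot 5)=2.6$, so Lemma \ref{ExistenceMaximalCurve} applies. Thus any superspecial curve of genus $4$ over $\overline{\F_5}$ descends to a maximal curve over $\F_{25}$, and in particular has $|C(\F_{25})|=1+2\cdot 4 \cdot 5+25=66$ rational points. This is far larger than the bounds $36$ and $37$ appearing in Remark \ref{Remark A1} and Remark \ref{Remark A2}, so assumptions (A1) and (A2) are automatically satisfied. By Remark \ref{QuadFormOverFiniteField}, after an $\F_{25}$-linear change of coordinates one may assume that the quadric $Q$ defining $C$ is of one of three types: (N1) $Q=2xw+2yz$, (N2) $Q=2xw+y^2-\epsilon z^2$, or the degenerate form $Q=2yw+z^2$ (after absorbing $-\epsilon$ as in Section \ref{Orthogonal group in the degenerate case}). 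Since $p=5$ satisfies all the parity hypotheses of Section \ref{SectionReduction}, the reduction lemmas apply uniformly.

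Next, I would treat each of the three cases in turn. In case (N1), Lemma \ref{ReductionLemmaN1} expresses $P$ in terms of at most ten continuous parameters $a_i\in\F_{25}$ together with a few discrete parameters $b_j$ ranging over $\{0,1\}$ or $\{0\}\cup\F_{25}^\times/(\F_{25}^\times)^2$. Case (N2) is handled analogously by Lemma \ref{ReductionLemmaN2}, and the degenerate case by Lemma \ref{ReductionLemmaDegenerate}. For each normalized pair $(Q,P)$, Corollary \ref{prop:HW} with $p=5$ translates the superspeciality condition into the simultaneous vanishing of the $16$ explicit coefficients of the monomials listed there in $(PQ)^{4}$. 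Adjoining the field equations $a_i^{25}=a_i$ to those $16$ polynomial equations yields a zero-dimensional system in the $a_i$, which I would solve using Magma's \texttt{Variety} routine together with the hybrid technique of \cite{BFP}; each candidate solution is then tested for non-singularity of $V(Q,P)$ via Algorithm \ref{alg:non-sing}.

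Assembling the output, I expect the computation to show that no solutions exist in cases (N1) and (N2), while in the degenerate case the surviving non-singular solutions are precisely those $\F_{25}$-isomorphic to a curve of the form
\[
2yw+z^2=0,\qquad x^3+a_1y^3+a_2w^3+a_3zw^2=0,
\]
with $a_1,a_2\in\F_{25}^\times$ and $a_3\in\F_{25}$; the condition $a_1\ne 0$ is forced by the step eliminating $x^3$'s coefficient reversed against the output, and $a_2\ne 0$ by non-singularity at $(0{:}0{:}0{:}1)$. The main obstacle is purely computational: the systems arising from (N1) and (N2) involve roughly a dozen indeterminates in $\F_{25}$, so the Gr\"obner basis computation is borderline in memory, and one has to trade off, as discussed in the introduction, how many coefficients to leave as indeterminates versus enumerating exhaustively over $\F_{25}$. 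Once this balance is tuned heuristically so that the computation terminates, the theorem follows directly from the case analysis.
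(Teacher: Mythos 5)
Your proposal follows essentially the same route as the paper's proof: normalize the quadric (Remark~\ref{QuadFormOverFiniteField}), reduce the cubic by the similitude orthogonal group via Lemmas~\ref{ReductionLemmaN1}, \ref{ReductionLemmaN2}, \ref{ReductionLemmaDegenerate}, translate superspeciality into the vanishing of the $16$ coefficients of Corollary~\ref{prop:HW} in $(PQ)^4$, and solve the resulting system over $\F_{25}$ by Gr\"obner basis methods. The computational conclusion (no solutions for the non-degenerate quadrics, the stated family in the degenerate case) matches Propositions~\ref{prop:N1(i)q25}--\ref{prop:Degenerate_q25}.

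Two of your explanatory side-remarks in the degenerate case are incorrect. The constraint $a_1\neq 0$ does not come from ``reversing the $x^3$-elimination step'': in the degenerate case the $x^3$-coefficient is \emph{not} eliminated (it must be non-zero since otherwise $(1:0:0:0)$ is a singular point; see the beginning of Section~\ref{Orthogonal group in the degenerate case}), and the non-vanishing of the $y^3$-coefficient (your $a_1$) is instead \emph{arranged} by Step~3 of that reduction, which works whenever $\sharp K>5$. The constraint $a_2\neq 0$ is not a non-singularity condition at $(0:0:0:1)$ either: if $a_2=0$ but $a_3\neq 0$, the Jacobian matrix of $(Q,P)$ at that point has rank $2$, so the curve is smooth there; $a_2\neq 0$ is actually read off from the Hasse--Witt computation (Proposition~\ref{prop:Degenerate_q25} gives $a_8\in\F_{25}^\times$). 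Finally, be careful with your opening claim: the ``$66$ rational points'' conclusion (and hence (A1)/(A2)) applies to the maximal $\F_{25}$-form produced by Lemma~\ref{ExistenceMaximalCurve}, which the paper uses precisely to rule out the two non-degenerate quadrics. It does \emph{not} hold for an arbitrary superspecial curve over $\F_{25}$, some of which have as few as $6$ rational points (Example~\ref{CompleteRepresentativesChar5}). The degenerate-case reduction needs only $\sharp K>5$, so your invocation of Lemma~\ref{ReductionLemmaDegenerate} is still valid, but the argument should say this explicitly rather than appealing to (A1)/(A2) there.
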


\begin{proof}
As mentioned in Section 1, there is no superspecial hyperelliptic curve in characteristic $5$.
Let $C$ be a nonhyperelliptic curve of genus $4$ over $\mathbb{F}_{25}$.
We have seen in Section 2.1 that $C$ is defined
by a quadratic form $Q$ and a cubic form $P$ in $\F_{25} [x, y, z, w]$.
By Remark 2.1.1 and the first paragraph in Section 4.5,
we may assume that $Q$ is either of (N1) $2xw+2yz$ and (N2) $2xw+y^2-\epsilon z^2$
in the non-degenerate case and is $2yw+z^2$ in the degenerate case.


First we claim that there does not exist a superspecial curve $C=V(P,Q)$ if $Q$ is non-degenerate.
By Lemma \ref{ExistenceMaximalCurve}, if such a superspecial curve exists, then
it has a maximal curve over $\F_{25}$ as an $\F_{25}$-form, whose quadratic form is also non-degenerate.
Hence we may assume in addition that $C$ is a maximal curve.
Since a maximal curve has many rational points ($66$ rational points),
we can use the reduction in Section \ref{SectionReduction},
i.e., $P$ satisfies either of Lemma \ref{ReductionLemmaN1} (i) and (ii)
for (N1) case and Lemma \ref{ReductionLemmaN2} for (N2) case.
Now in each case we check that there does not exist a $P$ such that $C=V(P,Q)$ is a superspecial curve.
\begin{description}
	\item[(N1) {\rm (i):}] 
	If $P$ satisfies Lemma \ref{ReductionLemmaN1} (i), then $C=V(P,Q)$ is $\F_{25}$-isomorphic to $V(P',Q)$ for some
	\begin{eqnarray}
	P' & = & ( a_1 y + a_2 z) x^2 + a_3 y z x  + y^3 + a_4 z^3 + b_1 y^2 z + a_5 y z^2 \nonumber \\
	& & + ( a_6 y^2  + a_7 y z  + b_2 z^2 ) w + ( a_8  y + a_9 z ) w^2 + a_{10} w^3, \nonumber
	\end{eqnarray}
	where $a_1, a_2 \neq 0$, $b_1 \in \{ 0, 1, - \epsilon \}$ and $b_2 \in \{ 0, 1 \}$.
	By Proposition \ref{prop:N1(i)q25} in Section \ref{subsec:comp_result},
	there does not exist $( b_1, b_2, a_1, \ldots , a_{10} )$ such that $V (P', Q)$ is a superspecial curve.
	\item[(N1) {\rm (ii):}] If $P$ satisfies Lemma \ref{ReductionLemmaN1} (ii), then
	$C= V(P,Q)$ is $\F_{25}$-isomorphic to $V(P',Q)$ for some
	\begin{eqnarray}
	P' & = & ( a_1 y + a_2 z ) x^2 + a_3 y z x + b_1 y^2 z + b_2 y z^2  \nonumber \\
	& & + ( a_4 y^2  + a_5 y z + b_3 z^2) w + ( a_6 y  + a_7 z ) w^2 + a_8 w^3, \nonumber
	\end{eqnarray}
	where $a_1, a_2 \neq 0$, $b_1, b_3 \in \{ 0, 1 \}$ and $b_2 \in \{ 0, 1, - \epsilon \}$.
	By Proposition \ref{prop:N1(ii)q25} in Section \ref{subsec:comp_result},
	there does not exist $( b_1, b_2, b_3, a_1, \ldots , a_8 )$ such that $V (P', Q)$ is a superspecial curve.
	\item[(N2){\rm :}] Lemma \ref{ReductionLemmaN2} says that $C=V(P,Q)$ is $\F_{25}$-isomorphic to $V(P',Q)$ for some
	\begin{eqnarray}
	P' & = & ( a_1 y + a_2 z ) x^2 + a_3 (y^2 - \epsilon z^2) x + b_1 y ( y^2 - \epsilon z^2 ) + a_4 y ( y^2 + 3 \epsilon z^2 ) + a_5 z (3 y^2 + \epsilon z^2 ) \nonumber \\
	& & + ( a_6 y^2 + a_7 y z + b_2 z^2 ) w + ( a_8 y + a_9 z ) w^2 + a_{10} w^3, \nonumber
	\end{eqnarray}
	where $(a_1, a_2) \neq (0, 0)$ and $b_1, b_2 \in \{ 0, 1 \}$.
	By Proposition \ref{prop:N2q25} in Section \ref{subsec:comp_result}, 
	there does not exist $( b_1, b_2,  a_1, \ldots , a_{10} )$ such that $V (P', Q)$ is superspecial.
\end{description}

We next consider the degenerate case: $Q=2 y w + z^2$.
\begin{description}
	\item[Degenerate case{\rm :}] 
Lemma \ref{ReductionLemmaDegenerate} says that $C=V(P,Q)$ is $\F_{25}$-isomorphic to $V(P',Q)$ for some
	\begin{eqnarray}
	P' & = & a_0 x^3 + a_1 x y^2 + a_2 x z^2 + a_3 x w^2 + a_4 x y z  + a_5 x z w \nonumber \\
	& & + a_6 y^3 + a_7 z^3 + a_8 w^3 + a_9 y z^2 + b_1 z^2 w + b_2 z w^2, \nonumber
	\end{eqnarray}
	where $a_0, a_6 \neq 0$, $b_1, b_2 \in \{ 0, 1 \}$.
	By Proposition \ref{prop:Degenerate_q25} in Section \ref{subsec:comp_result}, $V (P', Q)$ is superspecial if and only if $a_0, a_6, a_8 \in \mathbb{F}_{25}^\times$, $b_2 \in \{ 0, 1 \}$, $a_i = 0$ for $i =1, \ldots , 5, 7, 9$ and $b_1 = 0$.
\end{description}
The theorem summarizes the above descriptions.
\end{proof}

Now we have a computational proof of the uniqueness of
superspecial curve of genus $4$ over an algebraically closed field, see \cite{FGT} for the original and theoretical proof.
\begin{cor}\label{MainCorollary}
All superspecial curves of genus $4$ in characteristic $5$ are isomorphic to each other over an algebraically closed field.
\end{cor}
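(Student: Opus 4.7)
My plan is to combine descent from $\overline{\F_5}$ to $\F_{25}$ with the classification of Theorem~\ref{MainTheorem_intro} and the enumeration of representatives in Example~\ref{CompleteRepresentativesChar5}.

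Let $C$ be a superspecial curve of genus $4$ over an algebraically closed field $k$ of characteristic $5$. Since $g = 4 > (p^2+1)/(2p) = 13/5$ at $p = 5$, Lemma~\ref{ExistenceMaximalCurve} produces an $\F_{25}$-form $X$ of $C$, which is automatically a maximal curve. Theorem~\ref{MainTheorem_intro} then gives an $\F_{25}$-isomorphism from $X$ to $V(Q, P)$, where $Q = 2yw + z^2$ and $P = x^3 + a_1 y^3 + a_2 w^3 + a_3 z w^2$ for some $a_1, a_2 \in \F_{25}^\times$ and $a_3 \in \F_{25}$.

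Working over $\overline{\F_5}$, the torus action $(x, y, z, w) \mapsto (cx, ay, z, a^{-1}w)$ followed by a constant rescaling of the cubic transforms the triple by $(a_1, a_2, a_3) \mapsto (c^{-3} a^{3} a_1,\, c^{-3} a^{-3} a_2,\, c^{-3} a^{-2} a_3)$. Since $\overline{\F_5}$ contains a sixth root of $a_2/a_1$, one may solve simultaneously $c^{-3} a^3 a_1 = 1$ and $c^{-3} a^{-3} a_2 = 1$, normalizing $(a_1, a_2)$ to $(1, 1)$. The residual parameter $a_3 \in \overline{\F_5}$ is then identified with a single canonical value by combining the swap $s : y \leftrightarrow w$, the unipotent $U(a)$-transformations of the orthogonal similitude group, and the $\gV$-translations $x \mapsto x + (\text{linear form in } y, z, w)$; after reduction modulo $Q$, these merge all values of $a_3$ into a single $\overline{\F_5}$-isomorphism class, as laid out in Example~\ref{CompleteRepresentativesChar5}.

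The main obstacle is this last merging of $a_3$-values. The subgroup of $\tilde\gO_\varphi(\overline{\F_5})$ that literally preserves the canonical form is too small---indeed, the torus and sign-flip action preserves the rational invariant $a_3^6/(a_1 a_2^5)$---so the required $\overline{\F_5}$-isomorphisms must come from compositions that leave the canonical form only after reduction modulo $Q$ and a compensating $\gV$-translation. This is why a concrete computational verification, rather than a purely abstract argument, is needed, and is exactly what Example~\ref{CompleteRepresentativesChar5} provides.
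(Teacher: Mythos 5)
Your proposal follows the same two-step route as the paper: normalize $(a_1,a_2)$ to $(1,1)$ by a diagonal transformation in $\tilde\gO_\varphi$ together with a constant rescaling of the cubic, and then verify computationally that the residual parameter $a_3$ can be shifted to any value --- in the paper's phrasing, that some element of $\gB s\gU$ carries $C_0 : x^3+y^3+w^3=0$ to $C_\alpha : x^3+y^3+w^3+\alpha zw^2=0$. Your observation that the diagonal action preserves the rational invariant $a_3^6/(a_1 a_2^5)$ is correct and is a useful addition: it makes explicit why the Borel alone cannot merge the $a_3$-values, so that the reflection $s$ (and a unipotent conjugate) is genuinely needed.

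The one real defect is the final citation. You appeal to Example~\ref{CompleteRepresentativesChar5} as the computational verification that the $a_3$-values merge over $\overline{\F_5}$, but that example does something different and logically downstream. It lists $21$ complete representatives of $\F_{25}$-isomorphism classes; it never compares curves over $\overline{\F_5}$, so it cannot by itself supply the merging you need. Worse, the assertion that its list of representatives is complete rests on the count of $\F_{25}$-forms via $H^1(\Gamma,\Aut(C))$ in Section~6.2, which in turn uses Corollary~\ref{MainCorollary} itself (to know that every superspecial genus-$4$ curve in characteristic $5$ has the automorphism group computed for the single curve in Example~\ref{simplest curve}). Citing Example~\ref{CompleteRepresentativesChar5} here would therefore make the argument circular. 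The computation you actually want is the one referenced in Remark~\ref{ComputationForUniquenessOfSSp}: a Gr\"obner-basis search, with matrix entries of a candidate element of $\gB s\gU$ as indeterminates, exhibiting an explicit isomorphism $C_0 \cong C_\alpha$ over $\overline{\F_5}$. That calculation is logically independent of the later classification and is what the paper's own proof invokes.
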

\begin{proof}
It suffices to show that the curves in Theorem \ref{MainTheorem} are all isomorphic over an algebraically closed field.
By $(y\mapsto \lambda\mu y,z\mapsto \mu z, w\mapsto \frac{\mu}{\lambda} w)$ we have
\[
x^3 + a_1 \lambda^3 \mu^3 y^3 + a_2 \frac{\mu^3}{\lambda^3} w^3 + a_3 \frac{\mu^3}{\lambda^2} zw^2=0.
\]
There exists $(\lambda,\mu)$ such that $a_1 \lambda^3 \mu^3=1$ and $a_2 \frac{\mu^3}{\lambda^3}=1$.
One may consider only the following form
\[
C_\alpha:\quad x^3 + y^3 +  w^3 + \alpha zw^2=0.
\]
Finally we claim that there exists an element of $\gB s \gU$ transforming $C_0$ to $C_\alpha$,
by a computation with Gr\"obner basis, where indeterminates are some entries of matrices,
see Remark \ref{ComputationForUniquenessOfSSp} below.
\end{proof}

\begin{rem}\label{ComputationForUniquenessOfSSp}
One can verify the claim in the proof of Corollary \ref{MainCorollary}
from our computation programs over Magma \cite{Magma}, \cite{MagmaHP} and Maple \cite{MapleHP}.
For the programs with outputs, see the web page of the first author \cite{HPkudo}.
\end{rem}

\begin{theor}\label{MainTheorem2}
There is no superspecial curve of genus $4$ in characteristic $7$.
\end{theor}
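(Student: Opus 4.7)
The plan is to mimic the strategy used for Theorem \ref{MainTheorem} ($p=5$), but now showing that in every case the system of equations cut out by the vanishing of the Hasse-Witt matrix has no solution at all (not even in the degenerate case, in contrast to $p=5$). Since $p=7$ satisfies all the technical restrictions $p \ne 2,3$ and $|K| = 49 > 5$ needed for the reduction lemmas \ref{ReductionLemmaN1}, \ref{ReductionLemmaN2}, \ref{ReductionLemmaDegenerate}, and since Ekedahl's bound rules out the hyperelliptic case for $g=4, p=7$, the same case-analysis framework applies.

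First I would reduce to the maximal curve setting in the non-degenerate case. Let $C$ be a superspecial nonhyperelliptic curve of genus $4$ over $\overline{\F_7}$ defined by $Q$ and $P$ in $\bbP^3$. Since $g = 4 > (p^2+1)/(2p) = 50/14$, Lemma \ref{ExistenceMaximalCurve} gives an $\F_{49}$-form $X$ of $C$ that is a maximal curve over $\F_{49}$, with $\sharp X(\F_{49}) = 1 + 2\cdot 4 \cdot 7 + 49 = 106$. This is well above the thresholds $36$ and $37$ appearing in Remarks \ref{Remark A1} and \ref{Remark A2}, so assumptions (A1) and (A2) are automatically satisfied. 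Thus the defining cubic $P$ can be put into one of the normal forms prescribed by Lemmas \ref{ReductionLemmaN1}(i), \ref{ReductionLemmaN1}(ii), \ref{ReductionLemmaN2}, corresponding to $Q$ being of type (N1) or (N2). In the degenerate case, since $|\F_{49}| > 5$, Lemma \ref{ReductionLemmaDegenerate} applies directly without invoking Lemma \ref{ExistenceMaximalCurve} or any rational point assumption, and $P$ can be reduced to the normal form there.

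Next, for each of the four normal forms (N1)(i), (N1)(ii), (N2), and degenerate, I would set up the superspeciality system. By Corollary \ref{prop:HW} the Hasse-Witt matrix of $V(P,Q)$ vanishes if and only if each of the $16$ explicit coefficients of $(PQ)^{p-1} = (PQ)^6$ listed there is zero. Treating the free parameters $a_i, b_j$ in each normal form as indeterminates over $\F_{49}$ yields a polynomial system whose $\F_{49}$-solutions parametrize the superspecial curves of that shape. I would then feed each system, in an appropriately chosen graded reverse lexicographic order and with the hybrid exhaustive-search/Gröbner technique of \cite{BFP}, to the Magma enumeration routine described in Sections \ref{subsec:algorithm}--\ref{subsec:imple}, just as was done for $p=5$. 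The conclusion I expect in each of the four cases is that the variety of solutions is empty (or contains only solutions producing singular $V(P,Q)$, which would be discarded by the non-singularity test of Algorithm \ref{alg:non-sing}); collecting the four empty outputs yields Theorem \ref{MainTheorem2}.

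The main obstacle will be computational feasibility. The power $(PQ)^{p-1} = (PQ)^{6}$ is considerably larger than the $(PQ)^{4}$ used in the $p=5$ case, the $16$ Hasse-Witt equations have higher degree in the coefficients, and the ground field $\F_{49}$ is larger. In particular, the (N2) and degenerate normal forms carry $10$--$11$ free parameters, so a naive Gröbner basis computation is likely to be prohibitive. The key practical step will be to balance the hybrid tradeoff, heuristically specializing a carefully chosen subset of the $a_i$ to elements of $\F_{49}$ (reducing the number of indeterminates to a manageable level) while leaving enough free to keep the number of exhaustive branches tractable; after each specialization one computes a zero-dimensional Gröbner basis and extracts $\F_{49}$-points via Magma's \texttt{Variety}. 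Once no surviving branch produces a non-singular $V(P,Q)$, the theorem follows. This specialization strategy, together with the reductions in Section \ref{SectionReduction} that already cut the raw dimension of the search space, is what one expects to make the computation finish in practice, as it did for $p=5$.
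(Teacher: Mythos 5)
Your outline matches the paper's strategy, but there is one genuine oversight in how you handle the degenerate case. You assert that when $Q$ is degenerate, Lemma \ref{ReductionLemmaDegenerate} ``applies directly without invoking Lemma \ref{ExistenceMaximalCurve} or any rational point assumption.'' It is true that the reduction lemma itself only requires $\sharp K > 5$ rather than (A1)/(A2). But the Magma enumeration via \texttt{Variety} only finds coefficient tuples with entries in $\F_{49}$, so for Propositions \ref{prop:N1(i)q49}--\ref{prop:Degenerate_q49} to settle the question over $\overline{\F_7}$ you must first descend the putative superspecial curve from $\overline{\F_7}$ to $\F_{49}$ --- and that descent is precisely what Lemma \ref{ExistenceMaximalCurve} supplies, in the degenerate case no less than in the non-degenerate ones. (Your intuition may come from the proof of Theorem \ref{MainTheorem}, where the degenerate case is treated without the lemma; but there the curve is already assumed to live over $\F_{25}$, whereas Theorem \ref{MainTheorem2} is a statement about any field of characteristic $7$.) The paper therefore invokes Lemma \ref{ExistenceMaximalCurve} up front for all quadratic types: its proof of Theorem \ref{MainTheorem2} reduces directly to the non-existence of a maximal curve of genus $4$ over $\F_{49}$, observes such a curve would have $106$ rational points so the Section \ref{SectionReduction} reductions apply, and then cites Propositions \ref{prop:N1(i)q49}--\ref{prop:Degenerate_q49}. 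If you apply Lemma \ref{ExistenceMaximalCurve} uniformly as the first step, your proposal becomes the paper's proof: the thresholds $36$ and $37$, the Hasse--Witt criterion of Corollary \ref{prop:HW}, and the hybrid Gr\"obner-basis enumeration over $\F_{49}$ are all exactly as you describe.
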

\begin{proof}
By Lemma \ref{ExistenceMaximalCurve}, it suffices to show that
there does not exist a maximal curve of genus $4$ over $\F_{49}$.
As a maximal curve has many rational points ($106$ rational points),
we can use the reduction in Section \ref{SectionReduction}.
Similarly to the proof of Theorem \ref{MainTheorem},
the theorem follows from Propositions \ref{prop:N1(i)q49} -- \ref{prop:Degenerate_q49} in Section \ref{subsec:comp_result}.
\end{proof}

Here is a remark on $N_{49}(4)$,
where $N_q(g)$ denotes
the maximum number of rational points of curves over $\F_q$ of genus $g$.
Before this paper, it was known that $102 \le N_{49}(4) \le 106$.
Note that $106$ is the Hasse-Weil bound,
and the lower bound is due to E. W. Howe, 
see \cite{Howe}, {\sc{Table}} 3.
Theorem B
(the non-existence of maximal curves over $\F_{49}$ of genus $4$)
says that $N_{49}(4) \le 105$.
But the authors learned from E. W. Howe
that we can say much more.
He told us that
combining a result in Howe-Lauter \cite{Howe-Lauter} and Theorem B,
we have:

\begin{cor}
$N_{49}(4) = 102$.
\end{cor}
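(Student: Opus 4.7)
The plan is to establish $N_{49}(4) = 102$ by sandwiching it between the known lower bound and a new matching upper bound. The lower bound $N_{49}(4) \ge 102$ is supplied by the explicit construction of Howe \cite{Howe}, Table 3, so it remains to prove $N_{49}(4) \le 102$; equivalently, one must exclude every value $N \in \{103, 104, 105, 106\}$.

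The value $N = 106$ is eliminated directly by Theorem B of this paper: a genus-$4$ curve over $\F_{49}$ attaining the Hasse-Weil bound is maximal, hence its Jacobian is isogenous to $E^4$ for the supersingular elliptic curve with Frobenius trace $-14$, which forces the curve to be superspecial and contradicts Theorem B. For each remaining value $N \in \{103, 104, 105\}$ the approach is to enumerate all candidate Frobenius characteristic polynomials of the shape $f(T) = \prod_{i=1}^{4}(T^2 - a_i T + 49)$ with $|a_i| \le 14$ and $\sum a_i = 50 - N$, and then to run each one through the obstruction toolkit of Howe-Lauter \cite{Howe-Lauter}. Their methods combine resolvent and class-number inequalities, real Weil polynomial analysis, Honda-Tate factorisation, and constraints on admissible Jacobian decompositions; in combination with Theorem B (which rules out the superspecial Jacobians that arise in the extreme cases), every surviving candidate violates one of these conditions.

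The hard part will be the fine case analysis for $N = 105$: the equation $\sum a_i = -55$ admits decompositions such as $(-14,-14,-14,-13)$ that are closest to the maximal one and so produce Jacobians isogenous to a product of three supersingular elliptic curves with a fourth near-maximal factor. Eliminating these requires either a resolvent-class-number obstruction from Howe-Lauter or a reduction of the supersingular part to the superspecial situation forbidden by Theorem B. The cases $N = 103$ and $N = 104$ should succumb to the same machinery with fewer borderline candidates, and once all four values are eliminated the corollary follows.
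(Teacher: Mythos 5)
Your plan and the paper's proof agree on the framing: $N_{49}(4)\ge 102$ from Howe's construction, and $N=106$ is excluded because a maximal genus-$4$ curve over $\F_{49}$ would be superspecial, contradicting Theorem~B. The divergence, and the gap, is in how the remaining values $103,104,105$ are killed.

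The paper disposes of $N=105$ immediately by invoking Serre's theorem (\cite{Serre}, Thm.~2(1), p.~15 and the theorem on p.~24): a curve of genus $>2$ over a finite field cannot have defect~$1$ with respect to the Hasse--Weil--Serre bound $q+1+g\lfloor 2\sqrt q\rfloor$, which for $q=49$, $g=4$ is $106$. That is a one-line elimination requiring no case analysis. You, by contrast, flag $N=105$ as ``the hard part'' and only gesture at a strategy (``either a resolvent-class-number obstruction from Howe--Lauter or a reduction of the supersingular part to the superspecial situation forbidden by Theorem~B''). The second alternative is not a proof: Theorem~B forbids superspecial curves, but a curve with $105$ points need not be superspecial, and a Jacobian isogenous to ``three supersingular factors plus one near-maximal factor'' is not something Theorem~B addresses. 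Without Serre's defect-$1$ theorem, your $N=105$ case is genuinely open in the proposal.

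For $N=103,104$ (defects $3$ and $2$), your plan of enumerating real Weil polynomials and running them through the Howe--Lauter obstructions is essentially what the paper does: it cites Howe--Lauter \cite{Howe-Lauter}, Theorem~3.1, and records that the authors verified the non-existence by running the accompanying Magma program \texttt{isogeny\_classes(49,4,N)} for $N=103,104$. So that part of your plan is sound and matches the paper. The concrete fix your proposal needs is to replace the vague treatment of $N=105$ with the defect-$1$ theorem of Serre, after which the remainder goes through as you describe.
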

\begin{proof}
It remains to show that
there does not exist a genus-$4$ curve $C$ over $\F_{49}$
with $|C(\F_{49})|=103,104$ or $105$.
For a curve over $\F_q$ of genus $g$,
its defect is defined to be the difference between
the Hasse-Weil-Serre bound $q+1+g[2\sqrt{q}]$ and
the number of $\F_q$-rational points on the curve.
Serre has proved that any curve over a finite field can not have defect $1$ if its genus $>2$,
see \cite{Serre}, Thm 2 (1) on p.~15 and Theorem on p.~24.
Hence in our case $|C(\F_{49})|=105$ is not possible.
Also genus-$4$ curves over $\F_{49}$
can not have defect $2$ or defect $3$:
the reason is Howe-Lauter \cite{Howe-Lauter}, Theorem 3.1.
To check actually the non-existence of such curves, we used
Magma programs (\verb-IsogenyClasses.magma-)
associated with \cite{Howe-Lauter}, found at
\verb-http://alumnus.caltech.edu/~however/papers/paper35.html-;
more concretely executed
\verb-isogeny_classes(49,4,N)- for $N=103$ and $104$
after loading \verb-IsogenyClasses.magma-.
\end{proof} 

\subsection{An algorithm to enumerate superspecial curves of genus $4$}\label{subsec:algorithm}

In this subsection, we give an algorithm to enumerate superspecial curves of genus $4$.
Recall from Section 2.1 that a nonhyperelliptic curve of genus $4$ is defined by a quadratic form $Q$ and a cubic form $P$ in $x, y, z, w$.
Given $Q$, as in Section 4, $P$ is transformed into
\begin{equation}
\sum_{i=1}^t a_i p_i + \sum_{j=1}^u b_j q_j \label{input_P}
\end{equation}
for some cubics $p_i$'s and $q_j$'s.
Here $a_i$'s and $b_j$'s are exact values in $K$ determined from $Q$ and certain conditions of $P$.
Then our aim is to compute all $( a_1, \ldots , a_t, b_1, \ldots , b_u )$ such that $C = V( P, Q)$ is superspecial.
We describe our main algorithm for the enumeration.

\paragraph{Main Algorithm:}
Let $\mathcal{M}$ be the set of the $16$ monomials given in Corollary \ref{prop:HW}.
We fix a quadratic form $Q$ over $K:= \mathbb{F}_q$.
Given cubics $p_1, \ldots , p_t$, $q_1, \ldots , q_u$ and some conditions of coefficients, we enumerate all cubic forms of the forms \eqref{input_P} such that the curves $C = V ( P, Q )$ are superspecial.

\begin{enumerate}\setcounter{enumi}{0}
	\item (Deciding inputs of Algorithm \ref{alg:enume}) Choose $1 \leq s \leq t$ and indices $i_1, \ldots , i_s$, and then regard $a_{i_1}, \ldots , a_{i_s}$ as variables.
	The remaining part $(a_{j_1},\ldots,a_{j_{t^{\prime}}})$  ($\{j_1,\ldots,j_{t^{\prime}}\} = \{1,\ldots,t \}\setminus \{i_1,\ldots,i_s\}$) runs through a subset $\mathcal{A}$ of $(\mathbb{F}_q)^{t-s}$, which is	determined in each case.
	\item (Algorithm \ref{alg:enume}) For each $\left( a_{j_1}, \ldots , a_{j_{t^{\prime}}} \right) \in \mathcal{A}$, take the following procedures:
	\begin{enumerate}
		\item Compute the coefficients of the monomials of $\mathcal{M}$ in $(P Q)^{p-1}$.
		We denote by $\mathcal{S}$ the set of the coefficients.
		Note that $P \in \mathbb{F}_q [ a_{i_1}, \ldots , a_{i_s} ] [ x, y, z, w]$ in this step and that $\mathcal{S} \subset \mathbb{F}_q [ a_{i_1}, \ldots , a_{i_s} ]$.
		\item Solve the system of algebraic equations $f ( a_{i_1}, \ldots , a_{i_s} ) = 0$ for all $f \in \mathcal{S}$ over $\mathbb{F}_q$.
		\item For each solution $( a_{i_1}, \ldots , a_{i_s})$, decide whether $C = V ( P, Q)$ is non-singular or not.
	\end{enumerate}
\end{enumerate}
We give in Algorithm \ref{alg:enume} our main algorithm in an algorithmic format.

\begin{algorithm}[htb] %
\caption{$\texttt{EnumerateSSpCurves} ( Q, P, ( i_1, \ldots , i_s ), \mathcal{A}, p, \succ)$}
\label{alg:enume}
\begin{algorithmic}[1]
\REQUIRE{A quadratic form $Q$ in $S = K [x, y, z, w]$, a cubic form $P$ of the form \eqref{input_P} in $K [a_1, \ldots , a_t] [x,y,z,w]$, a tuple $( i_1, \ldots , i_s )$ with $1 \leq i_1 < \cdots < i_s \leq t$, a subset $\mathcal{A}$ of $K^{t-s}$, the characteristic $p$ of $K$, and a term order $\succ$ on $a_{i_1}, \ldots , a_{i_t}$}
\ENSURE{A family $\mathcal{P}$ of cubics $P$ such that the curves $C = V ( P, Q )$ are superspecial}
\STATE $\mathcal{P}$ $\leftarrow$ $\emptyset$
\STATE $\mathcal{M}$ $\leftarrow$ the set of the $16$ monomials given in Corollary \ref{prop:HW}
\STATE $t^{\prime}$ $\leftarrow$ $t-s$; Write $\{ 1, \ldots , t \} \smallsetminus \{ i_1, \ldots , i_s \} = \{ j_1, \ldots , j_{t^{\prime}} \}$
\FOR{$\left( a_{j_1}, \ldots , a_{j_{t^{\prime}}} \right) \in \mathcal{A}$}
	\STATE Substitute $\left( a_{j_1}, \ldots, a_{j_{t^{\prime}}} \right)$ to $P$ /* Keep $a_{i_1}, \ldots , a_{i_s}$ being indeterminates*/
	\STATE $h$ $\leftarrow$ $( P Q )^{p-1}$
	\STATE $\mathcal{S}$ $\leftarrow$ $\emptyset$
	\FOR{$x^k y^{\ell} z^m w^n \in \mathcal{M}$}
		\STATE $f \left( a_{i_1}, \ldots, a_{i_s} \right)$ $\leftarrow$ the coefficient of $x^k y^{\ell} z^m w^n$ in $h$
		\STATE $\mathcal{S}$ $\leftarrow$ $\mathcal{S} \cup \{ f \left( a_{i_1}, \ldots, a_{i_s} \right) \}$
	\ENDFOR
	\STATE $I$ $\leftarrow$ the ideal $\langle \mathcal{S} \rangle \subset K [ a_{i_1}, \ldots, a_{i_s} ]$
	\STATE Solve the system $f \left( a_{i_1}, \ldots, a_{i_s} \right) = 0$ for all $f \in \mathcal{S}$ over $K$ by some known algorithm with $\succ$
	\STATE $V$ $\leftarrow$ $V(I) = \{ ( a_{i_1}, \ldots , a_{i_s} ) \in K^s : f \left( a_{i_1}, \ldots, a_{i_s} \right) = 0 \mbox{ for all } f \in \mathcal{S} \}$
	\IF{$V \neq \emptyset$}
		\FOR{$\left( a_{i_1}, \ldots, a_{i_s} \right) \in V$}
			\STATE Substitute $\left( a_{i_1}, \ldots, a_{i_s} \right)$ to $P$ /* Then $P \in K[x,y,z,w]$ */
			\IF{$\texttt{DetermineNonSingularity} ( P, Q )$ outputs ``non-singular''}
				\STATE $\mathcal{P}$ $\leftarrow$ $\mathcal{P} \cup \{ P \}$
			\ENDIF
		\ENDFOR
	\ENDIF
\ENDFOR
\RETURN $\mathcal{P}$
\end{algorithmic}
\end{algorithm}

\begin{ex}\label{ex:N2}
In the case of {\bf (N2)} for $q=49$ in Section \ref{SectionReduction}, the fixed quadratic form is $Q = 2 x w + y^2 - \epsilon z^2$ and by Lemma \ref{ReductionLemmaN2} the transformed cubic form is
\begin{eqnarray}
P &=& ( a_1 y + a_2 z ) x^2 + a_3 (y^2 - \epsilon z^2) x + b_1 y ( y^2 - \epsilon z^2 ) + a_4 y ( y^2 + 3 \epsilon z^2 ) + a_5 z (3 y^2 + \epsilon z^2 ) \nonumber \\
& & + ( a_6 y^2 + a_7 y z + b_2 z^2 ) w + ( a_8 y + a_9 z ) w^2 + a_{10} w^3, \nonumber
\end{eqnarray}
which has $12$ independent coefficients, where $( a_1, a_2 ) \neq (0, 0)$ and $b_1, b_2 \in \{ 0, 1 \}$.
In this case, we put $t:=10$, $u=2$ and
\begin{eqnarray}
\{ p_1, \ldots , p_t \}  &:=& \{ y x^2, z x^2, (y^2 - \epsilon z^2) x , y ( y^2 + 3 \epsilon z^2 ), z (3 y^2 + \epsilon z^2 ), y^2 w, y z w, y w^2, z w^2, w^3 \}, \nonumber \\
\{ q_1, \ldots , q_u \} &:=& \{ y ( y^2 - \epsilon z^2 ) , z^2 w \}. \nonumber 
\end{eqnarray}
For each $( b_1, b_2 )$, we conduct the following procedures:
\begin{enumerate}
	\item Put $s := 7$, and $(i_1, \ldots , i_7) := ( 4, 5, 6, 7, 8, 9, 10)$ (we regard the $7$ coefficients $a_4$, $a_5$, $a_6$, $a_7$, $a_8$, $a_9$, $a_{10}$ as variables).
	We set $\mathcal{A}:= \{ ( a_1, a_2, a_3 ) \in (\mathbb{F}_{49})^3 : (a_1, a_2 ) \neq (0,0) \}$.
	\item For each $( a_1 , a_2, a_3 ) \in \mathcal{A}$, 
	\begin{enumerate}
		\item Compute the coefficients of the monomials of $\mathcal{M}$ in $(P Q)^{p-1}$.
		We denote by $\mathcal{S}$ the set of the coefficients.
		Note that $P \in \mathbb{F}_{49} [ a_4, a_5, a_6, a_7, a_8, a_9, a_{10}] [ x, y, z, w]$ in this step and that $\mathcal{S} \subset \mathbb{F}_{49} [ a_4, a_5, a_6, a_7, a_8, a_9, a_{10}]$.
		\item Solve the system of algebraic equations $f ( a_4, a_5, a_6, a_7, a_8, a_9, a_{10} ) = 0$ for all $f \in \mathcal{S}$ over $\mathbb{F}_{49}$ via the Gr\"{o}bner basis computation.
		\item For each solution $( a_4, a_5, a_6, a_7, a_8, a_9, a_{10} )$, decide whether $C = V ( P, Q)$ is non-singular or not by Algorithm \ref{alg:non-sing}.
	\end{enumerate}
\end{enumerate}
For the term order we adopt, see the proof of Proposition \ref{prop:N2q49} in Section \ref{subsec:comp_result}.
\end{ex}

\subsection{Computational parts of our proofs of the main theorems}\label{subsec:comp_result}
To prove Theorems \ref{MainTheorem} and \ref{MainTheorem2}, we in this subsection give computational results obtained by our implementation in a computer algebra system (for details on the implementation, see Section \ref{subsec:imple}).
We execute Main Algorithm in Section \ref{subsec:algorithm} in each case, and use the notation in the algorithm.


\subsubsection{Case of (N1) (i) for $q = 25$}

\begin{prop}\label{prop:N1(i)q25}
Consider the quadratic form $Q = 2 x w + 2 y z$ and
\begin{eqnarray}
P &=& ( a_1 y + a_2 z) x^2 + a_3 y z x  + y^3 + a_4 z^3 + b_1 y^2 z + a_5 y z^2 \nonumber \\
& & + ( a_6 y^2  + a_7 y z  + b_2 z^2 ) w + ( a_8  y + a_9 z ) w^2 + a_{10} w^3, \nonumber
\end{eqnarray}
where $a_1, a_2 \neq 0$, $b_1 \in \{ 0, 1, - \epsilon \}$ and $b_2 \in \{ 0, 1 \}$.
Then there does not exist $( b_1, b_2, a_1, \ldots , a_{10} )$ such that $V (P, Q)$ is superspecial.
\end{prop}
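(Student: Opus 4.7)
The plan is to apply Algorithm \ref{alg:enume} (the main enumeration algorithm of Section \ref{subsec:algorithm}) to the specific inputs $Q=2xw+2yz$ and the parametrized cubic $P$ given in the statement, and to verify that the returned family $\mathcal{P}$ is empty for every admissible $(b_1,b_2)$. Concretely, I would loop over the $3\cdot 2 = 6$ choices of $(b_1,b_2)\in\{0,1,-\epsilon\}\times\{0,1\}$, reducing the question in each case to a system of polynomial equations in the remaining scalars $a_1,\ldots,a_{10}$ over $\mathbb{F}_{25}$.

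For the criterion itself I would rely on Corollary \ref{prop:HW}: for $p=5$, $V(P,Q)$ is superspecial if and only if the $16$ coefficients listed there vanish in $(PQ)^{p-1}=(PQ)^4$. Expanding $(PQ)^4$ symbolically in the ten unknowns $a_1,\ldots,a_{10}$ is expensive, so, following the heuristic described in Section \ref{subsec:algorithm}, I would split the variables: enumerate some of the $a_i$ over $\mathbb{F}_{25}$ (respecting the conditions $a_1\ne 0$, $a_2\ne 0$) and treat the remaining $a_i$ as indeterminates in a Gr\"obner basis computation. A natural split (mirroring Example \ref{ex:N2}) is to fix the triple $(a_1,a_2,a_3)$ by brute force, subject to $a_1,a_2\in\mathbb{F}_{25}^\times$ and $a_3\in\mathbb{F}_{25}$, and to treat $(a_4,\ldots,a_{10})$ as the seven indeterminates fed to Gr\"obner-basis solving via Magma's \texttt{Variety}. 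The term order will be graded reverse lexicographic, combined with the hybrid technique of \cite{BFP}.

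For each of the $6\cdot (24)^2\cdot 25 = 86400$ exhaustive tuples $(b_1,b_2,a_1,a_2,a_3)$, step (a) of Algorithm \ref{alg:enume} produces a subset $\mathcal{S}\subset\mathbb{F}_{25}[a_4,\ldots,a_{10}]$ of $16$ polynomial equations, step (b) computes $V(I)\subseteq\mathbb{F}_{25}^{7}$, and step (c) applies \texttt{DetermineNonSingularity} of Algorithm \ref{alg:non-sing} via the Jacobian-plus-radical-membership test of Lemma \ref{lem:non-sing} to discard singular $V(P,Q)$. The claim is that after running through all these cases, no surviving $P$ is produced; i.e., in every branch either $V(I)=\emptyset$ or every point of $V(I)$ yields a singular intersection. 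This is exactly the output $\mathcal{P}=\emptyset$, which proves the proposition.

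The main obstacle is computational rather than conceptual: the expansion of $(PQ)^4$ in seven indeterminates with $\mathbb{F}_{25}$-coefficients, followed by $\mathbb{F}_{25}$-Gr\"obner basis solving, must be feasible for all $86400$ outer tuples without memory blow-up. The split chosen above is precisely designed to keep both the number of indeterminates and the number of outer iterations within the range that Magma handles in practical time on a standard PC, as noted in the introduction; should a particular $(b_1,b_2)$ branch turn out to be too heavy, one can shift additional $a_i$'s from the Gr\"obner side to the enumeration side, trading memory for time, until the computation terminates. Once the implementation returns $\mathcal{P}=\emptyset$ in each of the $6$ branches, the proposition follows from the correctness of Algorithm \ref{alg:enume} (Corollary \ref{prop:HW} for the superspeciality test, Lemma \ref{lem:non-sing} for the non-singularity test).
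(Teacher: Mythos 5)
Your proposal is correct and follows essentially the same approach as the paper: run Algorithm \ref{alg:enume} over each of the $6$ choices of $(b_1,b_2)$, using Corollary \ref{prop:HW} for the superspeciality test and Lemma \ref{lem:non-sing} via Algorithm \ref{alg:non-sing} for the non-singularity test, and observe that the returned family $\mathcal{P}$ is empty. The only difference is a computational parameter: you exhaust $(a_1,a_2,a_3)$ with $7$ Gr\"obner indeterminates ($86400$ outer iterations), whereas the paper's proof of this $q=25$ case exhausts only $(a_1,a_2)$ with $8$ indeterminates ($3456$ iterations), reserving your split for the corresponding $q=49$ case (Proposition \ref{prop:N1(i)q49}); both are valid instances of the same tradeoff the paper discusses.
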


\begin{proof}
We set $t = 10$, $u = 2$ and
\begin{eqnarray}
\{ p_1, \ldots , p_t \} & = & \{ y x^2, z x^2, y z x, z^3, y z^2,  y^2 w, y z w, y w^2, z w^2, w^3 \}, \nonumber \\
\{ q_1, \ldots , q_u \} & = & \{ y^2 z, z^2 w \}. \nonumber 
\end{eqnarray}
For each $( b_1, b_2 )$, we conduct the following procedures:
\begin{enumerate}
	\item Put $s := 8$, and $(i_1, \ldots , i_8) := ( 3, 4, 5, 6, 7, 8, 9, 10)$ (we regard the $8$ coefficients $a_3$, $a_4$, $a_5$, $a_6$, $a_7$, $a_8$, $a_9$, $a_{10}$ as variables).
	For computations concerned with Gr\"{o}bner bases over $\mathbb{F}_{25} [ a_3, a_4, a_5, a_6, a_7, a_8, a_9, a_{10}]$, we adopt the grevlex order with
\[
a_{10} \prec a_9 \prec a_4 \prec a_8 \prec a_7 \prec a_5 \prec a_6 \prec a_3.
\]
	For $\mathbb{F}_{25} [ x, y, z, w]$, we adopt the grevlex order with $w \prec z \prec y \prec x$.
	Put $\mathcal{A} = \mathbb{F}_{25}^{\times} \times \mathbb{F}_{25}^{\times}$.
	\item Execute Algorithm \ref{alg:enume}. More concretely, we proceed the following three steps for each $( a_1 , a_2 ) \in \mathcal{A}$: 
	\begin{enumerate}
		\item Compute the coefficients of the monomials of $\mathcal{M}$ in $(P Q)^{p-1}$.
		We denote by $\mathcal{S}$ the set of the coefficients.
		Note that $P \in \mathbb{F}_{25} [ a_3, a_4, a_5, a_6, a_7, a_8, a_9, a_{10}] [ x, y, z, w]$ in this step and that $\mathcal{S} \subset \mathbb{F}_{25} [ a_3, a_4, a_5, a_6, a_7, a_8, a_9, a_{10} ]$.
		\item Solve the system of algebraic equations $f ( a_3, a_4, a_5, a_6, a_7, a_8, a_9, a_{10} ) = 0$ for all $f \in \mathcal{S}$ over $\mathbb{F}_{25}$ via the Gr\"{o}bner basis computation.
		\item For each solution $( a_3, a_4, a_5, a_6, a_7, a_8, a_9, a_{10} )$, decide whether $C = V ( P, Q)$ is non-singular or not by Algorithm \ref{alg:non-sing}.
	\end{enumerate}
\end{enumerate}
It follows from the outputs of our computation that there does not exist $( b_1, b_2, a_1, \ldots , a_{10} )$ such that $V (P, Q)$ is superspecial.
\end{proof}

Note in this case that the number of possible $(b_1, b_2, a_1, a_2)$, the coefficients which we run, is $3 \cdot 2 \cdot 24 \cdot 24 = 3456$.

\subsubsection{Case of (N1) (ii) for $q = 25$}

\begin{prop}\label{prop:N1(ii)q25}
Consider the quadratic form $Q = 2 x w + 2 y z$ and
\begin{eqnarray}
P &=& ( a_1 y + a_2 z ) x^2 + a_3 y z x + b_1 y^2 z + b_2 y z^2  \nonumber \\
& & + ( a_4 y^2  + a_5 y z + b_3 z^2) w + ( a_6 y  + a_7 z ) w^2 + a_8 w^3, \nonumber
\end{eqnarray}
where $a_1, a_2 \neq 0$, $b_1, b_3 \in \{ 0, 1 \}$ and $b_2 \in \{ 0, 1, - \epsilon \}$.
Then there does not exist $( b_1, b_2, b_3, a_1, \ldots , a_8 )$ such that $V (P, Q)$ is superspecial.
\end{prop}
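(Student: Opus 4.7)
The plan is to run Main Algorithm of Section \ref{subsec:algorithm} on this case, following exactly the template established in the proof of Proposition \ref{prop:N1(i)q25}. Set $t=8$, $u=3$, and
\begin{eqnarray*}
\{p_1,\ldots,p_t\} &=& \{yx^2,\, zx^2,\, yzx,\, y^2w,\, yzw,\, yw^2,\, zw^2,\, w^3\}, \\
\{q_1,\ldots,q_u\} &=& \{y^2z,\, yz^2,\, z^2w\},
\end{eqnarray*}
so that the cubic form in the statement is exactly $\sum_{i=1}^{8} a_i p_i + b_1 q_1 + b_2 q_2 + b_3 q_3$. The outer loop runs over the finitely many admissible $(b_1,b_2,b_3) \in \{0,1\}\times\{0,1,-\epsilon\}\times\{0,1\}$, giving $12$ cases.

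For each such triple, I would put $s=6$ and $(i_1,\ldots,i_6)=(3,4,5,6,7,8)$, i.e.\ regard $a_3,\ldots,a_8$ as indeterminates over $\mathbb{F}_{25}$, and enumerate $(a_1,a_2)$ through $\mathcal{A} = \mathbb{F}_{25}^\times\times \mathbb{F}_{25}^\times$ (exploiting the constraint $a_1,a_2\ne 0$). For each $(a_1,a_2)\in\mathcal{A}$, I compute $(PQ)^{p-1}$ with $p=5$, extract the coefficients of the $16$ monomials listed in Corollary \ref{prop:HW}, and form the ideal $I\subset \mathbb{F}_{25}[a_3,\ldots,a_8]$ they generate. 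By Corollary \ref{prop:HW}, vanishing of these coefficients is equivalent to the Hasse--Witt matrix of $V(P,Q)$ being zero, and the solution set $V(I)\subset \mathbb{F}_{25}^{6}$ parametrises the candidates for superspecial $P$.

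The system of $16$ polynomial equations in $6$ unknowns over $\mathbb{F}_{25}$ is solved via the hybrid method of \cite{BFP}: compute a Gr\"obner basis of $I$ with respect to the grevlex order (the ordering on $a_3,\ldots,a_8$ being chosen heuristically, analogously to the ordering used in Proposition \ref{prop:N1(i)q25}, to minimise the degree of regularity and keep intermediate polynomials small), then call Magma's \texttt{Variety} to enumerate $V(I)\cap \mathbb{F}_{25}^{6}$. For each point of $V(I)$, substitute back into $P$ and run $\texttt{DetermineNonSingularity}(P,Q)$ of Algorithm \ref{alg:non-sing} to discard singular $V(P,Q)$. The claim will follow once we verify that in every one of the $12\cdot 24\cdot 24 = 6912$ outer loops the surviving set is empty.

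The main obstacle is computational rather than conceptual: with $6$ indeterminates and $16$ equations of total degree $4(p-1)+3(p-1)=28$ in $(PQ)^{p-1}$, a naive Gr\"obner basis computation can explode in memory. I would address this exactly as in Section \ref{subsec:imple} of the paper, balancing exhaustive enumeration of $(a_1,a_2,b_1,b_2,b_3)$ against the size of the remaining indeterminate set, and choosing the grevlex ordering on $a_3,\ldots,a_8$ experimentally to keep each Gr\"obner basis computation tractable on a standard PC. If some outer branch were to turn out too expensive, I would regard one additional coefficient among $a_3,\ldots,a_8$ as an outer loop variable and re-run the remaining $5$-variable system; the proof is complete as soon as all branches return the empty set.
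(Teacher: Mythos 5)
Your proposal matches the paper's proof essentially step for step: same $t=8$, $u=3$, same $p_i$, $q_j$ decomposition, same choice $s=6$ with $(i_1,\ldots,i_6)=(3,\ldots,8)$, same outer loop over $\mathcal{A}=\mathbb{F}_{25}^\times\times\mathbb{F}_{25}^\times$ and $(b_1,b_2,b_3)$ ($12\cdot 24\cdot 24=6912$ iterations), and the same Hasse--Witt criterion via Corollary \ref{prop:HW} followed by the non-singularity check of Algorithm \ref{alg:non-sing}; the paper simply records the concrete grevlex ordering $a_8\prec a_7\prec a_6\prec a_5\prec a_4\prec a_3$ where you leave it to heuristics. (One minor slip: the degree count ``$4(p-1)+3(p-1)=28$'' is off---$(PQ)^{p-1}$ has degree $5(p-1)=20$ in $x,y,z,w$ and degree at most $p-1=4$ in $a_3,\ldots,a_8$---but this is a side remark and does not affect the argument.)
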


\begin{proof}
We set $t = 8$, $u = 3$ and
\begin{eqnarray}
\{ p_1, \ldots , p_t \} & = & \{ y x^2, z x^2, y z x, y^2 w, y z w, y w^2, z w^2, w^3 \}, \nonumber \\
\{ q_1, \ldots , q_u \} & = & \{ y^2 z, y z^2, z^2 w \}. \nonumber 
\end{eqnarray}
For each $( b_1, b_2, b_3 )$, we conduct the following procedures:
\begin{enumerate}
	\item Put $s := 6$, and $(i_1, \ldots , i_6) := ( 3, 4, 5, 6, 7, 8)$ (we regard the $6$ coefficients $a_3$, $a_4$, $a_5$, $a_6$, $a_7$, $a_8$ as variables).
In this case we adopt the grevlex order with
	\[
	a_8 \prec a_7 \prec a_6 \prec a_5 \prec a_4 \prec a_3.
	\]
	For $\mathbb{F}_{25} [ x, y, z, w]$, we adopt the grevlex order with $w \prec z \prec y \prec x$.
	Put $\mathcal{A} = \mathbb{F}_{25}^{\times} \times \mathbb{F}_{25}^{\times}$.
	\item For each $( a_1 , a_2 ) \in \mathcal{A}$, as in Case {\bf (N1)} (i), we enumerate $( a_3, a_4, a_5, a_6, a_7 , a_8 )$ such that $C$ is superspecial by Algorithm \ref{alg:enume}.
\end{enumerate}
It follows from the outputs of our computation that there does not exist $( b_1, b_2, b_3, a_1, \ldots , a_8 )$ such that $V (P, Q)$ is superspecial.
\end{proof}

The number of possible $(b_1, b_2, b_3, a_1, a_2)$ is $2 \cdot 3 \cdot 2 \cdot 24 \cdot 24 = 6912$.

\subsubsection{Case of (N2) for $q = 25$}

\begin{prop}\label{prop:N2q25}
Consider the quadratic form $Q = 2 x w + y^2 - \epsilon z^2$ and
\begin{eqnarray}
P &=& ( a_1 y + a_2 z ) x^2 + a_3 (y^2 - \epsilon z^2) x + b_1 y ( y^2 - \epsilon z^2 ) + a_4 y ( y^2 + 3 \epsilon z^2 ) + a_5 z (3 y^2 + \epsilon z^2 ) \nonumber \\
& & + ( a_6 y^2 + a_7 y z + b_2 z^2 ) w + ( a_8 y + a_9 z ) w^2 + a_{10} w^3, \nonumber
\end{eqnarray}
where $(a_1, a_2) \neq (0, 0)$ and $b_1, b_2 \in \{ 0, 1 \}$.
Then there does not exist $( b_1, b_2, a_1, \ldots , a_{10} )$ such that $V (P, Q)$ is superspecial.
\end{prop}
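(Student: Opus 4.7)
The plan is to follow exactly the same computational scheme used in the proofs of Propositions \ref{prop:N1(i)q25} and \ref{prop:N1(ii)q25}, adapted to the data of the \textbf{(N2)} case as outlined in Example \ref{ex:N2}. Namely, I will specify the decomposition of $P$ according to \eqref{input_P}, fix which coefficients are treated as indeterminates and which are enumerated, fix term orders, and invoke Algorithm \ref{alg:enume} once for each choice of $(b_1,b_2)$. The output of the algorithm in every case will be the empty set, which gives the desired non-existence.

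Concretely, with $t=10$ and $u=2$, I would set
\[
\{p_1,\ldots,p_t\} = \{\,yx^2,\,zx^2,\,(y^2-\epsilon z^2)x,\,y(y^2+3\epsilon z^2),\,z(3y^2+\epsilon z^2),\,y^2w,\,yzw,\,yw^2,\,zw^2,\,w^3\,\},
\]
\[
\{q_1,q_2\} = \{\,y(y^2-\epsilon z^2),\,z^2w\,\},
\]
so that $P$ has exactly the shape \eqref{input_P} with coefficients $a_1,\ldots,a_{10}$ and fixed scalars $b_1,b_2\in\{0,1\}$. Following Example \ref{ex:N2}, I would take $s=7$ and $(i_1,\ldots,i_7)=(4,5,6,7,8,9,10)$, so the indeterminates are $a_4,a_5,a_6,a_7,a_8,a_9,a_{10}$, while the enumeration set is
\[
\mathcal{A}=\{(a_1,a_2,a_3)\in\F_{25}^{3}\,:\,(a_1,a_2)\neq(0,0)\},
\]
of cardinality $(25^{2}-1)\cdot 25=15600$, multiplied by the four choices of $(b_1,b_2)$. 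For the Gr\"obner basis computations on $\F_{25}[a_4,\ldots,a_{10}]$ I would use the grevlex order with some ordering on the $a_i$ (for instance $a_{10}\prec a_9\prec a_8\prec a_7\prec a_6\prec a_5\prec a_4$), and on $\F_{25}[x,y,z,w]$ the grevlex order with $w\prec z\prec y\prec x$, precisely as in the previous propositions.

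For each $(b_1,b_2,a_1,a_2,a_3)$, step 2(a) of Algorithm \ref{alg:enume} produces the $16$ coefficients of the monomials in $\mathcal{M}$ inside $(PQ)^{p-1}=(PQ)^{4}$, which are polynomials in $\F_{25}[a_4,\ldots,a_{10}]$; by Corollary \ref{prop:HW} the vanishing of these polynomials is equivalent to the Hasse--Witt matrix of $V(P,Q)$ being zero. Step 2(b) solves the resulting zero-dimensional (generically) system via Gr\"obner bases, and for every surviving tuple step 2(c) invokes Algorithm \ref{alg:non-sing} to discard singular $V(P,Q)$. The claim is then just that the algorithm returns $\mathcal{P}=\emptyset$ in all four cases $(b_1,b_2)\in\{0,1\}^{2}$.

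The main obstacle, as the authors themselves stress in the introduction and in Section \ref{subsec:algorithm}, is not mathematical but computational: the number of indeterminates kept symbolic directly controls the feasibility of the Gr\"obner basis step, and choosing too many (here, all ten $a_i$) yields systems whose basis computation exhausts memory, while choosing too few forces an infeasibly large enumeration. The split $s=7$ with $(a_1,a_2,a_3)$ enumerated is the heuristic choice that the authors report as having made the runs terminate in practice; I would rely on the same choice. Apart from this engineering issue, the proof reduces to running the implementation described in Section \ref{subsec:imple} and quoting its log, exactly as in Propositions \ref{prop:N1(i)q25} and \ref{prop:N1(ii)q25}.
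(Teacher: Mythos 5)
Your proposal is correct and takes essentially the same approach as the paper: reduce to the shape \eqref{input_P} with the same $p_i,q_j$, invoke Algorithm \ref{alg:enume} over the four $(b_1,b_2)$, and report $\mathcal{P}=\emptyset$. The one small deviation is that you pattern your indeterminate/enumeration split on Example \ref{ex:N2} (which is the $q=49$ case) and take $s=7$ with $(a_1,a_2,a_3)$ enumerated, whereas the paper's actual run for $q=25$ uses $s=8$, keeping $a_3$ symbolic, enumerating only $(a_1,a_2)$ over $\mathcal{A}=(\F_{25}\times\F_{25})\smallsetminus\{(0,0)\}$ ($2496$ iterations in total), with the grevlex order $a_{10}\prec a_9\prec a_8\prec a_7\prec a_6\prec a_5\prec a_4\prec a_3$; both splits are valid and this is purely an implementation choice, not a mathematical gap.
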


\begin{proof}
We set $t = 10$, $u = 2$ and
\begin{eqnarray}
\{ p_1, \ldots , p_t \} & = & \{ y x^2, z x^2, ( y^2 - \epsilon z^2) x, y ( y^2 + 3 \epsilon z^2 ), z (3 y^2 + \epsilon z^2 ), y^2 w , y z w, y w^2, z w^2, w^3 \}, \nonumber \\
\{ q_1, \ldots , q_u \} & = & \{ y ( y^2 - \epsilon z^2 ), z^2 w \}. \nonumber 
\end{eqnarray}
For each $( b_1, b_2 )$, we conduct the following procedures:
\begin{enumerate}
	\item Put $s := 8$, and $(i_1, \ldots , i_8) := (  3, 4, 5, 6, 7, 8, 9, 10 )$ (we regard the $8$ coefficients $a_3$, $a_4$, $a_5$, $a_{6}$, $a_{7}$, $a_{8}$, $a_{9}$, $a_{10}$ as variables).
	In this case we adopt the grevlex order with
\[
a_{10} \prec a_9 \prec a_8 \prec a_7 \prec a_6 \prec a_5 \prec a_4 \prec a_3.
\]
	For $\mathbb{F}_{25} [ x, y, z, w]$, we adopt the grevlex order with $w \prec z \prec y \prec x$.
	Put $\mathcal{A} = ( \mathbb{F}_{25} \times \mathbb{F}_{25} ) \smallsetminus \{ (0,0) \}$.
	\item For each $( a_1 , a_2 ) \in \mathcal{A}$, as in Case {\bf (N1)} (i), we enumerate $( a_3, a_4, a_5 , a_6, a_7, a_8, a_9, a_{10} )$ such that $C$ is superspecial by Algorithm \ref{alg:enume}.
\end{enumerate}
It follows from the outputs of our computation that there does not exist $( b_1, b_2, a_1, \ldots , a_{10} )$ such that $V (P, Q)$ is superspecial.
\end{proof}

The number of possible $(b_1, b_2, a_1, a_2 )$ is $2 \cdot 2 \cdot (25^2 - 1 ) = 2496$.

\subsubsection{Degenerate case for $q=25$}

\begin{prop}\label{prop:Degenerate_q25}
Consider the quadratic form $Q = 2 y w + z^2$ and
\begin{eqnarray}
P &=& a_0 x^3 + ( a_1 y^2 + a_2 z^2 + a_3 w^2 + a_4 y z  + a_5 z w ) x \nonumber \\
& & + a_6 y^3 + a_7 z^3 + a_8 w^3 + a_9 y z^2 + b_1 z^2 w + b_2 z w^2, \nonumber
\end{eqnarray}
where $a_0, a_6 \neq 0$, $b_1, b_2 \in \{ 0, 1 \}$.
Then $V (P, Q)$ is superspecial if and only if $a_0, a_6, a_8 \in \mathbb{F}_{25}^\times$, $b_2 \in \{ 0, 1 \}$, $a_i = 0$ for $i =1, \ldots , 5, 7, 9$ and $b_1 = 0$.
\end{prop}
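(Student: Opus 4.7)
The plan is to apply the Main Algorithm from Section \ref{subsec:algorithm} as in the three previous propositions, adapting the parameterization to the degenerate quadratic form $Q = 2yw + z^2$. Set $t = 10$, $u = 2$, and take
\[
\{p_1, \ldots, p_{10}\} = \{x^3,\ y^2 x,\ z^2 x,\ w^2 x,\ yzx,\ zwx,\ y^3,\ z^3,\ w^3,\ yz^2\}, \quad \{q_1, q_2\} = \{z^2 w,\ zw^2\}.
\]
For each of the four choices $(b_1, b_2) \in \{0,1\}^2$, one runs Algorithm \ref{alg:enume} with a choice of indeterminate coefficients $(a_{i_1}, \ldots, a_{i_s})$ and iteration set $\mathcal{A}$ (encoding $a_0, a_6 \in \mathbb{F}_{25}^\times$) chosen heuristically to keep the Gr\"obner basis computation feasible, under grevlex orders on $\mathbb{F}_{25}[a_{i_1}, \ldots, a_{i_s}]$ and on $\mathbb{F}_{25}[x,y,z,w]$, exactly in the spirit of Propositions \ref{prop:N1(i)q25}, \ref{prop:N1(ii)q25} and \ref{prop:N2q25}.

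For each iterated tuple, the algorithm extracts the sixteen coefficients of the Hasse-Witt monomials in $(PQ)^{p-1}$ for $p = 5$ (per Corollary \ref{prop:HW}), solves the resulting system over $\mathbb{F}_{25}$, and filters the solutions by non-singularity of $V(P,Q)$ via Algorithm \ref{alg:non-sing}. Unlike the preceding three cases, the output here is nonempty: the Hasse-Witt criterion is expected to force $a_1 = \cdots = a_5 = a_7 = a_9 = 0$ and $b_1 = 0$, while the combined Hasse-Witt and non-singularity tests then rule out $a_8 = 0$ (for instance, when $b_2 = 0$ and $a_8 = 0$ the point $(0{:}0{:}0{:}1)$ is a singular point of $V(P,Q)$, since $\nabla P$ vanishes there). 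Reading off the surviving cubic forms yields precisely the family described in the proposition, establishing both implications of the equivalence.

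The main obstacle, as for the earlier propositions, is practical: the expansion $(PQ)^4$ has many terms in up to ten indeterminate coefficients, so the iteration/indeterminate split and the term order must be optimized heuristically (as for Cases \textbf{(N1)} and \textbf{(N2)}) to avoid memory blow-up during the Gr\"obner basis step. Once these choices are fixed, the enumeration and the non-singularity verification are delegated to the Magma implementation described in Section \ref{subsec:imple}, whose output directly confirms both the necessary and sufficient conditions in the claimed characterization.
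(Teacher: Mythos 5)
Your proposal matches the paper's proof in all essential respects: the same monomial sets $\{p_i\}$ and $\{q_j\}$, the same iteration over $(b_1,b_2) \in \{0,1\}^2$, and the same delegation to Algorithm 5.2.1 with a heuristically chosen split between iterated and indeterminate coefficients. The paper merely records the specific choices it used (namely $s=7$ with $(i_1,\ldots,i_7)=(2,3,4,5,7,8,9)$, $\mathcal{A}=\F_{25}^\times\times\F_{25}\times\F_{25}^\times$ for $(a_0,a_1,a_6)$, and grevlex with $a_8\prec a_7\prec a_9\prec a_3\prec a_5\prec a_2\prec a_4$), whereas you leave these unspecified as heuristic; your added remark that the point $(0{:}0{:}0{:}1)$ is singular when $b_2=0$ and $a_8=0$ is a correct illustration but not part of the paper's argument, which simply reports the algorithm's output.
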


\begin{proof}
We set $t = 10$, $u = 2$ and
\begin{eqnarray}
\{ p_1, \ldots , p_t \} & = & \{ x^3, x y^2, x z^2, x w^2, x y z, x z w, y^3, z^3, w^3, y z^2 \}, \nonumber \\
\{ q_1, \ldots , q_u \} & = & \{ z^2 w, z w^2 \}. \nonumber 
\end{eqnarray}
For each $( b_1, b_2)$, we conduct the following procedures:
\begin{enumerate}
	\item Put $s := 7$, and $(i_1, \ldots , i_7) := ( 2, 3, 4, 5, 7, 8, 9)$ (we regard the $7$ coefficients $a_2$, $a_3$, $a_4$, $a_5$, $a_7$, $a_8$, $a_9$ as variables).
	In this case we adopt the grevlex order with 
	\[
	a_8 \prec a_7 \prec a_9 \prec a_3 \prec a_5 \prec a_2 \prec a_4.
	\]
	For $\mathbb{F}_{25} [ x, y, z, w]$, we adopt the grevlex order with $w \prec z \prec y \prec x$.
	Put $\mathcal{A} = \mathbb{F}_{25}^{\times} \times \mathbb{F}_{25} \times \mathbb{F}_{25}^{\times}$.
	\item For each $( a_0, a_1, a_6)  \in \mathcal{A}$, as in Case {\bf (N1)} (i), we enumerate $( a_2, a_3, a_4, a_5, a_7, a_8, a_9 )$ such that $C$ is superspecial by Algorithm \ref{alg:enume}.
\end{enumerate}
It follows from the outputs of our computation that $V (P, Q)$ is superspecial if and only if $a_0, a_6, a_8 \in \F_{25}^\times$, $b_2 \in \{ 0, 1 \}$, $a_i = 0$ for $i =1, \ldots , 5, 7, 9$ and $b_1 = 0$.
\end{proof}

The number of possible $(b_1, b_2, a_0, a_1, a_6)$ is $2 \cdot 2 \cdot 24 \cdot 25 \cdot 24 = 57600$.

\subsubsection{Case of (N1) (i) for $q = 49$}

\begin{prop}\label{prop:N1(i)q49}
Consider the quadratic form $Q = 2 x w + 2 y z$ and
\begin{eqnarray}
P &=& ( a_1 y + a_2 z) x^2 + a_3 y z x  + y^3 + a_4 z^3 + b_1 y^2 z + a_5 y z^2 \nonumber \\
& & + ( a_6 y^2  + a_7 y z  + b_2 z^2 ) w + ( a_8  y + a_9 z ) w^2 + a_{10} w^3, \nonumber
\end{eqnarray}
where $a_1, a_2 \neq 0$, $b_1 \in \{ 0, 1, - \epsilon \}$ and $b_2 \in \{ 0, 1 \}$.
Then there does not exist $( b_1, b_2, a_1, \ldots , a_{10} )$ such that $V (P, Q)$ is superspecial.
\end{prop}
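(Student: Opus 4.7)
The plan is to mirror the proof of Proposition \ref{prop:N1(i)q25} with $p=7$ and $\mathbb{F}_{49}$ in place of $\mathbb{F}_{25}$, and execute Algorithm \ref{alg:enume} case by case to confirm that every candidate cubic either fails superspeciality or defines a singular curve. The theoretical set-up is unchanged: Corollary \ref{prop:HW} reduces superspeciality of $C = V(P, Q)$ to the vanishing of the $16$ explicit coefficients of the expansion $(PQ)^{p-1} = (PQ)^{6}$ in Corollary \ref{prop:HW}, and the resulting system is solved by Gröbner basis techniques.

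Concretely, I would set $t=10$, $u=2$, and take
\begin{eqnarray*}
\{p_1,\ldots,p_{10}\} &=& \{yx^2,\ zx^2,\ yzx,\ z^3,\ yz^2,\ y^2w,\ yzw,\ yw^2,\ zw^2,\ w^3\},\\
\{q_1,q_2\} &=& \{y^2z,\ z^2w\},
\end{eqnarray*}
so that $P$ is written in the form \eqref{input_P}. For each of the $3\cdot 2 = 6$ choices of $(b_1,b_2)\in\{0,1,-\epsilon\}\times\{0,1\}$, declare $s=8$ with $(i_1,\ldots,i_8)=(3,4,5,6,7,8,9,10)$, i.e.\ regard $a_3,\ldots,a_{10}$ as indeterminates over $\mathbb{F}_{49}$, and let $(a_1,a_2)$ run over $\mathcal{A}=\mathbb{F}_{49}^\times\times\mathbb{F}_{49}^\times$ (a set of cardinality $48^2=2304$, so $6\cdot 2304 = 13824$ specializations in total). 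As in the $q=25$ case I would adopt the grevlex term order on $\mathbb{F}_{49}[a_3,\ldots,a_{10}]$ given by
\[
a_{10}\prec a_9\prec a_4\prec a_8\prec a_7\prec a_5\prec a_6\prec a_3,
\]
and grevlex with $w\prec z\prec y\prec x$ on $\mathbb{F}_{49}[x,y,z,w]$, since these were the orderings that made the analogous computation feasible for $p=5$.

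For each specialized pair $(a_1,a_2)$, I would then (i) extract the $16$ coefficients $\mathcal{S}\subset\mathbb{F}_{49}[a_3,\ldots,a_{10}]$ of the designated monomials in $(PQ)^6$, (ii) compute the variety $V(\langle\mathcal{S}\rangle)\subset\mathbb{F}_{49}^8$ via the hybrid method of \cite{BFP} (implemented by Magma's \texttt{Variety}), and (iii) for every resulting candidate cubic, invoke Algorithm \ref{alg:non-sing} to discard singular $V(P,Q)$. The conclusion is that the list of surviving cubics is empty.

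The principal obstacle is computational rather than conceptual: raising the characteristic from $5$ to $7$ makes $(PQ)^{p-1}$ a polynomial of total degree $30$ rather than $20$, and the parameter space over which $(a_1,a_2)$ ranges also grows, so the Gröbner basis computations and the coefficient extractions become significantly heavier. Keeping the computation within reasonable time and memory requires two careful choices already used in the $q=25$ analogue: fixing $(a_1,a_2)$ (rather than $a_3$ or $a_4$) to exploit their nonvanishing and keep the ideal small, and the specific grevlex ordering above, which experimentally yields tractable Gröbner bases. Assuming these choices transfer from $p=5$ to $p=7$, the enumeration will terminate and the conclusion of the proposition follows directly from an empty \texttt{Variety} output at every specialization.
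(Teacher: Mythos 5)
Your proposal uses the correct framework (Corollary \ref{prop:HW} plus Algorithm \ref{alg:enume}), the correct decomposition of $P$ into $p_i$'s and $q_j$'s, and the correct verification flow — so it is sound in spirit and would constitute a valid proof provided the computation actually terminates. However, you diverge from the paper on the single detail the authors flag as critical: the choice of $s$ and the set $\mathcal{A}$. You set $s=8$ and iterate $(a_1,a_2)$ over $\mathbb{F}_{49}^\times\times\mathbb{F}_{49}^\times$, which is a literal transcription of the $q=25$ Case (N1)(i) setup. The paper's proof of Proposition \ref{prop:N1(i)q49} instead takes $s=7$, treats only $a_4,\ldots,a_{10}$ as indeterminates, and enlarges $\mathcal{A}$ to $\mathbb{F}_{49}^\times\times\mathbb{F}_{49}^\times\times\mathbb{F}_{49}$ so that $a_3$ is also exhaustively specialized (hence $677376$ outer iterations rather than your $13824$), with the grevlex order $a_{10}\prec a_9\prec a_4\prec a_8\prec a_7\prec a_5\prec a_6$ on the remaining seven indeterminates.

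This is not cosmetic. The paper emphasizes repeatedly — in the introduction and in the remark following the $q=49$ propositions — that the number and choice of indeterminates and the term order were tuned experimentally per case precisely to avoid infeasible Gr\"obner basis computations and memory blowups, and that there is a genuine exhaustive-search/Gr\"obner-basis tradeoff in the sense of \cite{BFP}. For $p=7$ the polynomial $(PQ)^{6}$ has much higher degree and denser coefficients than $(PQ)^4$, so an eight-variable system over $\mathbb{F}_{49}$ is likely to be where the authors hit the wall that forced them to specialize $a_3$ as well. You explicitly flag your assumption that the $p=5$ choices "transfer" to $p=7$; the paper's evidence is that they do not, and that the extra specialization is exactly what makes the enumeration finish in roughly two days. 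So the gap in your proposal is not the logic of the reduction but the claim of feasibility: without moving $a_3$ from the indeterminate side to the specialization side (and dropping $a_3$ from the grevlex order accordingly), there is no ground to assert the Magma computation terminates.
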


\begin{proof}
We set $t = 10$, $u = 2$ and
\begin{eqnarray}
\{ p_1, \ldots , p_t \} & = & \{ y x^2, z x^2, y z x, z^3, y z^2,  y^2 w, y z w, y w^2, z w^2, w^3 \}, \nonumber \\
\{ q_1, \ldots , q_u \} & = & \{ y^2 z, z^2 w \}. \nonumber 
\end{eqnarray}
For each $( b_1, b_2 )$, we conduct the following procedures:
\begin{enumerate}
	\item Put $s := 7$, and $(i_1, \ldots , i_7) := ( 4, 5, 6, 7, 8, 9, 10)$ (we regard the $7$ coefficients $a_4$, $a_5$, $a_6$, $a_7$, $a_8$, $a_9$, $a_{10}$ as variables).
	In this case we adopt the grevlex order with
\[
a_{10} \prec a_9 \prec a_4 \prec a_8 \prec a_7 \prec a_5 \prec a_6.
\]
	For $\mathbb{F}_{49} [ x, y, z, w]$, we adopt the grevlex order with $w \prec z \prec y \prec x$.
	Put $\mathcal{A} = \mathbb{F}_{49}^{\times} \times \mathbb{F}_{49}^{\times} \times \mathbb{F}_{49}$.
	\item For each $( a_1 , a_2,  a_3 ) \in \mathcal{A}$, as in Case {\bf (N1)} (i) for $q = 25$, we enumerate $( a_4, a_5, a_6, a_7, a_8, a_9, a_{10} )$ such that $C$ is superspecial by Algorithm \ref{alg:enume}.
\end{enumerate}
It follows from the outputs of our computation that there does not exist $( b_1, b_2, a_1, \ldots , a_{10} )$ such that $V (P, Q)$ is superspecial.
\end{proof}

The number of possible $(b_1, b_2, a_1, a_2, a_3)$ is $3 \cdot 2 \cdot 48 \cdot 48 \cdot 49 = 677376$.

\subsubsection{Case of (N1) (ii) for $q = 49$}

\begin{prop}\label{prop:N1(ii)q49}
Consider the quadratic form $Q = 2 x w + 2 y z$ and
\begin{eqnarray}
P &=& ( a_1 y + a_2 z ) x^2 + a_3 y z x + b_1 y^2 z + b_2 y z^2  \nonumber \\
& & + ( a_4 y^2  + a_5 y z + b_3 z^2) w + ( a_6 y  + a_7 z ) w^2 + a_8 w^3, \nonumber
\end{eqnarray}
where $a_1, a_2 \neq 0$, $b_1, b_3 \in \{ 0, 1 \}$ and $b_2 \in \{ 0, 1, - \epsilon  \}$.
Then there does not exist $( b_1, b_2, b_3, a_1, \ldots , a_8 )$ such that $V (P, Q)$ is superspecial.
\end{prop}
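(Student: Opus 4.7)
The plan is to prove Proposition \ref{prop:N1(ii)q49} in exactly the same computational manner as Proposition \ref{prop:N1(ii)q25}, adapting the parameters of Algorithm \ref{alg:enume} so that the Gr\"obner basis step remains tractable over the larger field $\F_{49}$. The structural setup is identical: the input $P$ is the same polynomial expression in the indeterminates $a_1,\dots,a_8$ with the discrete parameters $b_1,b_3\in\{0,1\}$ and $b_2\in\{0,1,-\epsilon\}$, so the choice
\[
\{p_1,\dots,p_t\}=\{yx^2,zx^2,yzx,y^2w,yzw,yw^2,zw^2,w^3\},\qquad
\{q_1,q_2,q_3\}=\{y^2z,yz^2,z^2w\}
\]
with $t=8,u=3$ remains appropriate.

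Next, following the pattern used for the $(\mathrm{N}1)(\mathrm{i})$ case over $\F_{49}$ in Proposition \ref{prop:N1(i)q49}, I would expect to move one more coefficient out of the indeterminate block and into the exhaustive loop, to keep the Gr\"obner basis computation feasible. Concretely, I would put $s=5$, $(i_1,\dots,i_5)=(4,5,6,7,8)$, regarding $a_4,\dots,a_8$ as indeterminates, fix a grevlex order on these (with $a_8\prec a_7\prec a_6\prec a_5\prec a_4$) and the grevlex order $w\prec z\prec y\prec x$ on $\F_{49}[x,y,z,w]$, and set
\[
\mathcal{A}=\F_{49}^\times\times\F_{49}^\times\times\F_{49},
\]
letting $(a_1,a_2,a_3)$ run over $\mathcal{A}$. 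Then for each of the $2\cdot 3\cdot 2=12$ triples $(b_1,b_2,b_3)$ and each of the $48\cdot 48\cdot 49$ triples in $\mathcal{A}$, I invoke \texttt{EnumerateSSpCurves} to compute the coefficients in $\mathcal M$ of $(PQ)^{p-1}$, solve the resulting ideal membership system in $a_4,\dots,a_8$ by Gr\"obner bases (Magma's \texttt{Variety}), and test each $\F_{49}$-solution for non-singularity via Algorithm \ref{alg:non-sing}. Since $V(P,Q)$ is superspecial if and only if its Hasse--Witt matrix vanishes (by Corollary \ref{prop:HW}) and it is non-singular, the claim will follow once the algorithm reports an empty output in every branch.

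The genuine obstacle here is not mathematical but computational: the total enumeration loop has size $12\cdot 48\cdot 48\cdot 49\approx 1.35\times 10^6$, and each iteration invokes a Gr\"obner basis computation in $5$ indeterminates over $\F_{49}$, which near the boundary of what the hybrid method \cite{BFP} handles comfortably. The heuristic choice of which $a_i$ to move into $\mathcal{A}$ versus which to keep as indeterminates is dictated precisely by the tradeoff discussed in Section 1: fewer indeterminates shrinks each Gr\"obner basis call but enlarges the outer loop, and the specific split $(a_1,a_2,a_3)\mapsto\mathcal{A}$, $(a_4,\dots,a_8)\mapsto$ indeterminates is chosen because $a_1,a_2,a_3$ appear as high-degree nonlinear coefficients whose symbolic treatment would blow up the normal form computations, while $a_4,\dots,a_8$ enter multilinearly and admit efficient Gr\"obner handling.

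Finally, the conclusion is extracted exactly as in Proposition \ref{prop:N1(ii)q25}: every invocation of Algorithm \ref{alg:enume} returns $\mathcal{P}=\emptyset$, whence no tuple $(b_1,b_2,b_3,a_1,\dots,a_8)$ satisfying the stated constraints yields a superspecial curve $V(P,Q)$. The correctness of this reduction rests on Corollary \ref{prop:HW} (for the superspeciality test), Lemma \ref{lem:non-sing} (for the non-singularity test reduced to radical membership), and Lemma \ref{ReductionLemmaN1}(ii) together with the fact (from the proof of Theorem \ref{MainTheorem2}) that a maximal curve over $\F_{49}$ has enough rational points to satisfy assumption (A1).
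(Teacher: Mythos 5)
Your proposal follows the paper's approach exactly: reduce to the parametric family via Lemma~\ref{ReductionLemmaN1}(ii), run Algorithm~\ref{alg:enume} with Corollary~\ref{prop:HW} as the superspeciality test and Algorithm~\ref{alg:non-sing} for non-singularity, and conclude from empty output. The one place you deviate is the hybrid split: you predict the paper would move $a_3$ into the exhaustive loop (taking $s=5$, $\mathcal{A}=\F_{49}^\times\times\F_{49}^\times\times\F_{49}$, about $1.35\times 10^6$ iterations), by analogy with the $s$-reduction made for case (N1)(i) in passing from $q=25$ to $q=49$. In fact the paper keeps $s=6$ with $(i_1,\dots,i_6)=(3,\dots,8)$ and $\mathcal{A}=\F_{49}^\times\times\F_{49}^\times$, i.e.\ only $27648$ iterations, exactly the same split as in Proposition~\ref{prop:N1(ii)q25}; the Table~\ref{timing} entry confirms $s=6$ and a total time of roughly $26$ minutes, so the larger indeterminate block was evidently still tractable here. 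Your alternative tuning is mathematically sound and would prove the same statement, but it is not forced, and your heuristic rationale (that $a_3$ ``enters nonlinearly'' while $a_4,\dots,a_8$ are multilinear) is not accurate---all $a_i$ enter $P$ linearly, and the complexity difference comes only from how they propagate through $(PQ)^{p-1}$.
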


\begin{proof}
We set $t = 8$, $u = 3$ and
\begin{eqnarray}
\{ p_1, \ldots , p_t \} & = & \{ y x^2, z x^2, y z x, y^2 w, y z w, y w^2, z w^2, w^3 \}, \nonumber \\
\{ q_1, \ldots , q_u \} & = & \{ y^2 z, y z^2, z^2 w \}. \nonumber 
\end{eqnarray}
For each $( b_1, b_2, b_3 )$, we conduct the following procedures:
\begin{enumerate}
	\item Put $s := 6$, and $(i_1, \ldots , i_6) := ( 3, 4, 5, 6, 7, 8)$ (we regard the $6$ coefficients $a_3$, $a_4$, $a_5$, $a_6$, $a_7$, $a_8$ as variables).
	In this case we adopt the grevlex order with
\[
a_8 \prec a_7 \prec a_6 \prec a_5 \prec a_4 \prec a_3.
\]
	For $\mathbb{F}_{49} [ x, y, z, w]$, we adopt the grevlex order with $w \prec z \prec y \prec x$.
	Put $\mathcal{A} = \mathbb{F}_{49}^{\times} \times \mathbb{F}_{49}^{\times}$.
	\item For each $( a_1 , a_2 ) \in \mathcal{A}$, as in Case {\bf (N1)} (i) for $q = 25$, we enumerate $( a_3, a_4, a_5, a_6, a_7, a_8 )$ such that $C$ is superspecial by Algorithm \ref{alg:enume}.
\end{enumerate}
It follows from the outputs of our computation that there does not exist $( b_1, b_2, b_3, a_1, \ldots , a_8 )$ such that $V (P, Q)$ is superspecial.
\end{proof}

The number of possible $(b_1, b_2, b_3, a_1, a_2)$ is $2 \cdot 3 \cdot 2 \cdot 48 \cdot 48 = 27648$.

\subsubsection{Case of (N2) for $q = 49$}

\begin{prop}\label{prop:N2q49}
Consider the quadratic form $Q = 2 x w + y^2 - \epsilon z^2$ and
\begin{eqnarray}
P &=& ( a_1 y + a_2 z ) x^2 + a_3 (y^2 - \epsilon z^2) x + b_1 y ( y^2 - \epsilon z^2 ) + a_4 y ( y^2 + 3 \epsilon z^2 ) + a_5 z (3 y^2 + \epsilon z^2 ) \nonumber \\
& & + ( a_6 y^2 + a_7 y z + b_2 z^2 ) w + ( a_8 y + a_9 z ) w^2 + a_{10} w^3, \nonumber
\end{eqnarray}
where $(a_1, a_2) \neq (0,0)$ and $b_1, b_2 \in \{ 0, 1 \}$.
Then there does not exist $( b_1, b_2, a_1, \ldots , a_{10} )$ such that $V (P, Q)$ is superspecial.
\end{prop}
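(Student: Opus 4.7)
The plan is to apply Main Algorithm of Section \ref{subsec:algorithm} exactly as laid out in Example \ref{ex:N2}, which was written with this very proposition in mind. Thus I would take $t = 10$, $u = 2$, with $\{p_1, \ldots, p_{10}\}$ and $\{q_1, q_2\}$ the collections of monomials listed in Example \ref{ex:N2}, and then for each of the four pairs $(b_1, b_2) \in \{0,1\}^2$ regard the seven coefficients $a_4, a_5, a_6, a_7, a_8, a_9, a_{10}$ as indeterminates (so $s = 7$ and $(i_1, \ldots, i_7) = (4, 5, 6, 7, 8, 9, 10)$), while letting $(a_1, a_2, a_3)$ range over $\mathcal{A} := \{(a_1, a_2, a_3) \in \mathbb{F}_{49}^3 : (a_1, a_2) \neq (0, 0)\}$.

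For each $(a_1, a_2, a_3) \in \mathcal{A}$, the vanishing of the Hasse-Witt matrix of $V(P,Q)$ is, by Corollary \ref{prop:HW}, equivalent to the simultaneous vanishing of the $16$ coefficients in $(PQ)^{p-1} = (PQ)^6$ of the monomials of $\mathcal{M}$. These coefficients form a polynomial system $\mathcal{S}$ in $\mathbb{F}_{49}[a_4, \ldots, a_{10}]$, which I would solve via Gr\"obner basis techniques using the grevlex order with $a_{10} \prec a_9 \prec a_8 \prec a_7 \prec a_6 \prec a_5 \prec a_4$ on the coefficient ring and $w \prec z \prec y \prec x$ on $\mathbb{F}_{49}[x,y,z,w]$, consistently with the preceding propositions. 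Each resulting tuple would then be tested for non-singularity of $V(P,Q)$ via Algorithm \ref{alg:non-sing}, and the non-existence claim would follow provided no tuple survives both filters across all four choices of $(b_1, b_2)$.

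The main obstacle is purely computational. The outer loop over $\mathcal{A}$ has $(49^2 - 1)\cdot 49 = 117{,}600$ iterations, yielding a total of $470{,}400$ Gr\"obner basis computations across the four pairs $(b_1, b_2)$. The critical design decision, as discussed in Section \ref{subsec:algorithm} and following the hybrid strategy of \cite{BFP}, is how to split the twelve free parameters between the outer exhaustive search and the inner Gr\"obner basis variables: exposing fewer coefficients as indeterminates shrinks each Gr\"obner basis instance but enlarges $\mathcal{A}$, while exposing more does the opposite and risks memory blow-up. The split $s = 7$ together with the above term order is the heuristic choice which, on standard hardware, keeps each individual computation within practical time and memory bounds, which is why it is the first configuration to try and why Example \ref{ex:N2} adopts it.
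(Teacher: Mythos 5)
Your proposal matches the paper's proof exactly: the same setup $t=10$, $u=2$, the same choice $s=7$ with $(i_1,\ldots,i_7)=(4,5,6,7,8,9,10)$, the same set $\mathcal{A}$, the same grevlex orders on both coefficient variables and $x,y,z,w$, and the same invocation of Corollary \ref{prop:HW} and Algorithm \ref{alg:non-sing}. This is precisely the computation the authors ran (cf.\ Example \ref{ex:N2} and the timing data in Table \ref{timing}).
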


\begin{proof}
We set $t = 10$, $u = 2$ and
\begin{eqnarray}
\{ p_1, \ldots , p_t \} & = & \{ y x^2, z x^2, ( y^2 - \epsilon z^2) x, y ( y^2 + 3 \epsilon z^2 ), z (3 y^2 + \epsilon z^2 ), y^2 w , y z w, y w^2, z w^2, w^3 \}, \nonumber \\
\{ q_1, \ldots , q_u \} & = & \{ y ( y^2 - \epsilon z^2 ), z^2 w \}. \nonumber 
\end{eqnarray}
For each $( b_1, b_2 )$, we conduct the following procedures:
\begin{enumerate}
	\item Put $s := 7$, and $(i_1, \ldots , i_7) := ( 4, 5, 6, 7, 8, 9, 10)$ (we regard the $7$ coefficients $a_4$, $a_5$, $a_6$, $a_7$, $a_8$, $a_9$, $a_{10}$ as variables).
	In this case we adopt the grevlex order with
\[
a_{10} \prec a_9 \prec a_8 \prec a_7 \prec a_6 \prec a_5 \prec a_4.
\]
	For $\mathbb{F}_{49} [ x, y, z, w]$, we adopt the grevlex order with $w \prec z \prec y \prec x$.
	Put $\mathcal{A} = \{ (a_1, a_2, a_3 ) \in \mathbb{F}_{49} \times \mathbb{F}_{49} \times \mathbb{F}_{49} : (a_1, a_2 ) \neq (0,0) \}$.
	\item For each $( a_1 , a_2, a_3 ) \in \mathcal{A}$, as in Case {\bf (N1)} (i) for $q=25$, we enumerate $( a_4, a_5 , a_6, a_7, a_8, a_9, a_{10} )$ such that $C$ is superspecial by Algorithm \ref{alg:enume}.
\end{enumerate}
It follows from the outputs of our computation that there does not exist $( b_1, b_2, a_1, \ldots , a_{10} )$ such that $V (P, Q)$ is superspecial.
\end{proof}

The number of possible $(b_1, b_2, a_1, a_2, a_3 )$ is $2 \cdot 2 \cdot (49^2 - 1) \cdot 49= 470400$.

\subsubsection{Degenerate case for $q=49$}

\begin{prop}\label{prop:Degenerate_q49}
Consider the quadratic form $Q = 2 y w + z^2$ and
\begin{eqnarray}
P &=& a_0 x^3 + a_1 x y^2 + a_2 x z^2 + a_3 x w^2 + a_4 x y z  + a_5 x z w \nonumber \\
& & + a_6 y^3 + a_7 z^3 + a_8 w^3 + a_9 y z^2 + b_1 z^2 w + b_2 z w^2, \nonumber
\end{eqnarray}
where $a_0, a_6 \neq 0$, $b_1, b_2 \in \{ 0, 1 \}$.
Then there does not exist $( b_1, b_2, a_0, \ldots , a_9 )$ such that $V (P, Q)$ is superspecial.
\end{prop}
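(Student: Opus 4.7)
The plan is to apply Main Algorithm in Section \ref{subsec:algorithm} exactly as in Proposition \ref{prop:Degenerate_q25}, with $q=25$ replaced by $q=49$. Concretely, I set $t=10$, $u=2$, and take
\begin{eqnarray*}
\{ p_1, \ldots , p_t \} & = & \{ x^3, x y^2, x z^2, x w^2, x y z, x z w, y^3, z^3, w^3, y z^2 \}, \\
\{ q_1, \ldots , q_u \} & = & \{ z^2 w, z w^2 \},
\end{eqnarray*}
so that $P$ has the stated form. For each of the four choices of $(b_1,b_2)\in\{0,1\}^2$, I would regard $a_2, a_3, a_4, a_5, a_7, a_8, a_9$ as the $s=7$ indeterminates (so $(i_1,\ldots,i_7)=(2,3,4,5,7,8,9)$), and let the remaining parameters $(a_0, a_1, a_6)$ range over $\mathcal{A}:=\mathbb{F}_{49}^{\times}\times\mathbb{F}_{49}\times\mathbb{F}_{49}^{\times}$, exactly mirroring the $q=25$ case.

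Next, I would execute Algorithm \ref{alg:enume} for each $(a_0,a_1,a_6)\in\mathcal{A}$ using the grevlex order
\[
a_8 \prec a_7 \prec a_9 \prec a_3 \prec a_5 \prec a_2 \prec a_4
\]
on $\mathbb{F}_{49}[a_2,a_3,a_4,a_5,a_7,a_8,a_9]$, and the grevlex order $w\prec z\prec y\prec x$ on $\mathbb{F}_{49}[x,y,z,w]$. For each tuple I compute the coefficients in $\mathcal{M}$ from $(PQ)^{p-1}$ with $p=7$, forming the algebraic system $\mathcal{S}$ derived from Corollary \ref{prop:HW}, then solve the resulting zero-dimensional system via the Gr\"obner basis routine (Magma's \texttt{Variety}), and finally test non-singularity of the surviving $V(P,Q)$ by Algorithm \ref{alg:non-sing}. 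The desired conclusion is that in every one of the $2\cdot 2\cdot 48\cdot 49\cdot 48 = 451584$ cases, no solution yields a non-singular curve, so that no superspecial curve arises.

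The main obstacle I anticipate is not mathematical but computational: with $p=7$, the polynomial $(PQ)^{p-1}=(PQ)^6$ is of total degree $30$ and has many more monomials than its $p=5$ counterpart, and the search space for $(a_0,a_1,a_6)$ is roughly two orders of magnitude larger than in Proposition \ref{prop:Degenerate_q25}. The heuristic choice to keep exactly seven coefficients as Gr\"obner indeterminates (rather than more or fewer) is the key trade-off discussed in the introduction between exhaustive enumeration of parameters and Gr\"obner basis cost \cite{BFP}; the selection and the term order above are made to keep each Gr\"obner basis computation cheap while still exploiting the sparsity introduced by fixing $(a_0,a_1,a_6)$. Once the algorithm terminates with an empty output in all four $(b_1,b_2)$ cases, the proposition follows.
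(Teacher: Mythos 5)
Your proposal matches the paper's proof exactly: same choice of $p_i$, $q_j$, the same seven indeterminates $a_2,a_3,a_4,a_5,a_7,a_8,a_9$ with $(a_0,a_1,a_6)$ ranging over $\mathbb{F}_{49}^{\times}\times\mathbb{F}_{49}\times\mathbb{F}_{49}^{\times}$, the same grevlex order $a_8 \prec a_7 \prec a_9 \prec a_3 \prec a_5 \prec a_2 \prec a_4$, and the same $451584$-case exhaustive run of Algorithm \ref{alg:enume} followed by the non-singularity check. This is precisely the paper's computational argument, so the proposal is correct.
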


\begin{proof}
We set $t = 10$, $u = 2$ and
\begin{eqnarray}
\{ p_1, \ldots , p_t \} & = & \{ x^3, x y^2, x z^2, x w^2, x y z, x z w, y^3, z^3, w^3, y z^2 \}, \nonumber \\
\{ q_1, \ldots , q_u \} & = & \{ z^2 w, z w^2 \}. \nonumber 
\end{eqnarray}
For each $( b_1, b_2)$, we conduct the following procedures:
\begin{enumerate}
	\item Put $s := 7$, and $(i_1, \ldots , i_7) := ( 2, 3, 4, 5, 7, 8, 9)$ (we regard the $7$ coefficients $a_2, a_3, a_4, a_5$, $a_7$, $a_8$, $a_9$ as variables).
	In this case we adopt the grevlex order with 
\[
a_8 \prec a_7 \prec a_9 \prec a_3 \prec a_5 \prec a_2 \prec a_4.
\]
	For $\mathbb{F}_{49} [ x, y, z, w]$, we adopt the grevlex order with $w \prec z \prec y \prec x$.
	Put $\mathcal{A} = \mathbb{F}_{49}^{\times} \times \mathbb{F}_{49} \times \mathbb{F}_{49}^{\times}$.
	\item For each $( a_0, a_1, a_6)  \in \mathcal{A}$, as in Case {\bf (N1)} (i) for $q=25$, we enumerate $( a_2, a_3, a_4, a_5, a_7, a_8, a_9) $ such that $C$ is superspecial by Algorithm \ref{alg:enume}.
\end{enumerate}
It follows from the outputs of our computation that there does not exist $( b_1, b_2, a_0, \ldots , a_9 )$ such that $V (P, Q)$ is superspecial.
\end{proof}

The number of possible $(b_1, b_2, a_0, a_1, a_6)$ is $2 \cdot 2 \cdot 48 \cdot 49 \cdot 48 = 451584$.

\begin{rem}
The efficiency of Step (2b) in Main Algorithm in Section \ref{subsec:algorithm} deeply depends on the choice of $s$, $(i_1, \ldots , i_s)$ and the term order.
We experimentally estimated optimal ones for each case to compute.
\end{rem}

\subsection{Our implementation to prove main theorems}\label{subsec:imple}
We implemented computations in the proofs of Propositions \ref{prop:N1(i)q25} -- \ref{prop:Degenerate_q49}, including Algorithm \ref{alg:enume}, in Magma V2.20-10 \cite{Magma, MagmaHP}.
For the source codes and the log files, see the web page of the first author \cite{HPkudo}.

\subsubsection{Timing}

Table \ref{timing} shows the timing of our computation to prove Propositions \ref{prop:N1(i)q25} -- \ref{prop:Degenerate_q49}.
Each entry in the table is the average result of all iterations.

\begin{table}[h]
  \begin{center}
    \caption{Timing data of the computation for Propositions \ref{prop:N1(i)q25} -- \ref{prop:Degenerate_q49} by Algorithm \ref{alg:enume}}\label{timing}
\vspace{0.2cm}
    \scalebox{0.8}{
    \begin{tabular}{c|l|l|c|l|l|l|l|r|r} \hline
    $q$ & quadratic form & Case & $s$ & ~~~~~$t_{mlt}$ & ~~~~~$t_{GBslv}$ & ~~~~~~$t_{sing}$ & ~~~~~$t_{total}$ & Iterations & Total time~~~ \\ \hline
25 & Non-degenerate & {\bf N1} (i) & $8$ & 0.0098007s & 0.0075912s & 0.0074193s & 0.024952s & 3456 & 88.938s \\
 & & {\bf N1} (ii) & $6$ & 0.0047513s & 0.00034230s & 0.0022620s & 0.0074546s & 6912 & 56.610s\\
 & & {\bf N2} & $8$ & 0.035337s & 0.011722s & 0.0088927s & 0.055977s & 2496 & 139.861s\\
 & Degenerate & & $7$ & 0.0076950s & 0.00048167s & 0.0040507s & 0.013324s & 57600 & 783.695s \\ \hline
49 & Non-degenerate & {\bf N1} (i) & $7$ & 0.11507s & 0.048622s & 0.00035776s & 0.19460s & 677376 & 141411.749s \\
     &                             &                     & &                &                &                        &              & & (about 2 days) \\
 & & {\bf N1} (ii) & $6$ & 0.044985s & 0.0099669s & 0.00018157s & 0.055671s & 27648 & 1555.438s \\
 & & {\bf N2} & $7$ & 0.55784s & 0.067364s & 0.00016677s & 0.64785s & 470400 & 309667.773s\\
 & &                 & &              &               &                      &               & & (about 4.5 days) \\
 & Degenerate & & $7$ & 0.086542s & 0.025971s & 0.0013784s & 0.13524s & 451584 & 65566.630s\\ 
 &                     &  & &              &                   &                     &                 & & (about 1 day) \\ \hline
	 \end{tabular}}
 \end{center}
\end{table}

\paragraph{Table Notation}
Let $q$ be the cardinality of $K$.
``$s$'' denotes the number of the indeterminates in the computation to solve a system of algebraic equations in Step (2b) for each case.
``$t_{mlt}$'' denotes the time used in Step (2a) for the computation of the multiple $(P Q)^{p-1}$.
Note that we regard $s$ coefficients of $P$ as indeterminates and thus this computation is done over a multivariate polynomial ring with the indeterminates $x$, $y$, $z$, $w$ whose ground ring is a polynomial ring with $s$ variables.
``$t_{GBslv}$'' is the time used in Step (2b) for solving a system of algebraic equations over $K$ with the Gr\"{o}bner basis computation, and ``$t_{sing}$'' is the cost in Step (2c) for determing whether $C$ is non-singular or not.
``Iterations'' denotes the number of the iterations of Step (2).
``$t_{total}$'' denotes the time used in Steps (2a) -- (2c) for each iteration, wheres ``Total time'' denotes the total time used in Step (2), namely the total time taken for each case.

\paragraph{Workstation.}
We conducted the computation to prove Propositions \ref{prop:N1(i)q25} -- \ref{prop:Degenerate_q49} by a  Windows 8.1 Pro OS, 64 bit computer with 2.60 GHz CPU (Intel Core i5) and 8 GB memory.
In our implementation, the following built-in functions in Magma are called:
\begin{enumerate}
	\item \texttt{GroebnerBasis}: Fix a (decidable) term order $\succ$. (In our computation, let $\succ$ be a grevlex order with some ordering on variables.) For given finite polynomials $f_1, \ldots , f_s$ in $K [ X_1, \ldots , X_n]$ with a (computable) field $K$, this function computes the (reduced) Gr\"{o}bner basis of the ideal $\langle f_1, \ldots , f_s \rangle$ in $K [ X_1, \ldots , X_n]$ with respect to $\succ$ by the $F_4$ algorithm \cite{F4}.
	\item \texttt{Variety}: For given finite polynomials $f_1, \ldots , f_s$ in $K [ X_1, \ldots , X_n]$ with a (computable) field $K$, this function computes the set of all solutions in $K^n$ to $f_i = 0$ for all $1 \leq i \leq s$.
\end{enumerate}

\begin{ex}
We reconsider Example \ref{ex:N2}.
The notation is same as in Example \ref{ex:N2}.
The number of possible $(b_1, b_2, a_1, a_2, a_3 )$, the coefficients which we run, is $2 \cdot 2 \cdot ( 49^2 - 1 ) \times 49 = 470400$.
In our computation over Magma, the average of the time for Step (2b) was $0.067364$ seconds.
The average of the number of the solutions in Step (2b) was $0.091792$, which implies that each system in Step (2b) has no solution or has only the trivial solution $( a_4, a_5, a_6, a_7, a_8, a_9, a_{10} ) = ( 0, \ldots , 0)$ in most cases; in such a case, as this result shows, solving a system of algebraic equations over a finite field with the Gr\"{o}bner basis computation (with respect to a grevlex order) can be much efficient.
Moreover, if the system has no solution, we skip Step (2c).
The average of the time for Step (2c) was $0.00016677$ seconds.
As a result, all the computations for the case of (N2) with $q = 49$ was done in $309667.773$ seconds, which is extremely practical time.
\end{ex}

\subsubsection{Codes for our computation}

\paragraph{Loading our implementation program.}
One can execute our implementation programs on Magma as follows.
Assume here that the file \texttt{code\_q25N1(ii).txt}, which is one of our programs, is in the directory \texttt{C:/Users}. 

\begin{framed}
\vspace{-0.5cm}
\begin{small}
\begin{verbatim}
Magma V2.20-10    Fri Jun 03 2016 00:37:06 on home19890415 [Seed = 1903588510]
Type ? for help.  Type <Ctrl>-D to quit.
> load"C:/Users/code_q25N1(ii).txt";
\end{verbatim}
\end{small}
\vspace{-0.5cm}
\end{framed}

\paragraph{Sample code.}

We here give a piece of our codes.

\begin{framed}
\vspace{-0.5cm}
\begin{small}
\begin{verbatim}
Magma V2.20-10    Sun Jun 05 2016 14:33:33 on home19890415 [Seed = 2004064669]
Type ? for help.  Type <Ctrl>-D to quit.
> p:=5;
> q:=5^2;
> K:=GF(q);
> gen:=Generator(K);
> if gen ne PrimitiveElement(K) then
if>    "gen is not primitive!";
if> end if;
> epsilon:=-gen;
> for e in K do
for>     if e^2 - epsilon eq K!0 then
for|if>         "epsilon is square!";
for|if>     end if;
for> end for;
> s:=6;
> R<[t]>:=PolynomialRing(K,s,"grevlex");
> S<x,y,z,w>:=PolynomialRing(R,4,"grevlex");
> exponents_set:=[
> [ 2*p-2, p-1, p-1, p-1],
> [ 2*p-1, p-2, p-1, p-1],
> [ 2*p-1, p-1, p-2, p-1],
> [ 2*p-1, p-1, p-1, p-2],
> [ p-1, 2*p-2, p-1, p-1],
> [ p-2, 2*p-1, p-1, p-1],
> [ p-1, 2*p-1, p-2, p-1],
> [ p-1, 2*p-1, p-1, p-2],
> [ p-1, p-1, 2*p-2, p-1],
> [ p-2, p-1, 2*p-1, p-1],
> [ p-1, p-2, 2*p-1, p-1],
> [ p-1, p-1, 2*p-1, p-2],
> [ p-1, p-1, p-1, 2*p-2],
> [ p-2, p-1, p-1, 2*p-1],
> [ p-1, p-2, p-1, 2*p-1],
> [ p-1, p-1, p-2, 2*p-1]];
> not_vanished_monomials:=
> {@ x^(E[1])*y^(E[2])*z^(E[3])*w^(E[4]) : E in exponents_set @};
> Coeff_set:=MonomialsOfDegree(R,1);
> a:=[];
> a[1]:= gen^5;
> a[2]:= K!1;
> b:=[];
> b[1]:= K!0;
> b[2]:= - epsilon;
> b[3]:= K!0;
> P:= a[1]*x^2*y + a[2]*x^2*z + b[1]*y^2*z + b[2]*y*z^2 + b[3]*z^2*w;
> Q:= 2*x*w + 2*y*z;
> Mono_set_deg3_unknown:={@ x*y*z, y^2*w, y*z*w, y*w^2, z*w^2, w^3 @};
> for i in [1..#Mono_set_deg3_unknown] do
for>     P:= P + S!(Coeff_set[i])*(Mono_set_deg3_unknown[i]);
for> end for;
> h:=(P*Q)^(p-1);
> F:=[];
> for i in [1..#(not_vanished_monomials)] do
for>     F[i]:=MonomialCoefficient(h,not_vanished_monomials[i]);
for> end for;
> G:=GroebnerBasis(F);
> I:=ideal<R|G>;
> V:=Variety(I);
> #V;
2
> V[1];
<K.1^9, K.1^15, K.1^4, K.1^19, K.1^14, K.1^5>
> V[2];
<K.1^21, K.1^3, K.1^16, K.1^19, K.1^14, K.1^17>
\end{verbatim}
\end{small}
\vspace{-0.5cm}
\end{framed}

In the above piece of code, for Case {\bf (N1)} (ii) with $q=25$ and certain fixed coefficients, we seek the solutions of a system of algebraic equations derived from our criterion to determine whether the Hasse-Witt matrix is zero or not (for the notation, see Propositions \ref{prop:N1(i)q25} and \ref{prop:N1(ii)q25} in Section \ref{subsec:comp_result}).
We can in this case take $\epsilon \notin (\mathbb{F}_{25}^{\times})^2$ so that $- \epsilon$ is a generator of the cyclic group $\mathbb{F}_{25}^{\times}$.
In the above code, \texttt{gen} is the generator of $\mathbb{F}_{25}$ adopted by Magma.
One can verify that it is a generator of the cyclic group $\mathbb{F}_{25}^{\times}$.
For $( b_1, b_2, b_3) = (0, - \epsilon, 0)$ and $(a_1, a_2) = ( (-\epsilon)^5,1)$, we compute the tuples $( a_3, \ldots , a_8) \in (\mathbb{F}_{25})^6$ of the coefficients of $P$ such that the Hasse-Witt matrix of $C = V ( P, Q)$ is zero.
The output shows that the number of solutions $( a_3, \ldots , a_8)$ is $2$, and they are
\begin{eqnarray}
( a_3, a_4, a_5, a_6, a_7, a_8) & = & ( ( - \epsilon )^9, ( - \epsilon )^{15}, ( - \epsilon )^4, ( - \epsilon )^{19}, ( - \epsilon )^{14}, ( - \epsilon )^5 ), \mbox{ and } \nonumber \\
& & ( ( - \epsilon )^{21}, ( - \epsilon )^{3}, ( - \epsilon )^{16}, ( - \epsilon )^{19}, ( - \epsilon )^{14}, ( - \epsilon )^{17} ). \nonumber 
\end{eqnarray}

\section{Automorphism groups}\label{section:6}
In this section, we study the isomorphisms and the automorphism groups of superspecial curves of genus $4$, and enumerate superspecial curves over $\F_{25}$.
We use the notation in Sections \ref{non-degenerate case} and \ref{degenerate case}.
\subsection{Isomorphisms}
Let $K$ be a field.
Let $C_1=V(Q_1,P_1)$ and $C_2=V(Q_2,P_2)$ be curves of genus $4$ over $K$.

If there exists an isomorphism over $K$ from $C_1$ to $C_2$,
the quadratic forms $Q_1$ and $Q_2$ are equivalent over $K$.
Hence it is enough to consider the case of $Q_1=Q_2$, say $Q$. Let $\varphi$ be the symmetric matrix
associated to $Q$.
An isomorphism from $C_1$ to $C_2$ induces an isomorphism
from the space of global sections of the canonical sheaf on $C_2$ to that on $C_1$.
This implies that the set of isomorphisms over $K$ from $C_1$ to $C_2$ is naturally bijective to
\[
\{g \in \tilde \gO_\varphi(K) \mid g P_2 \equiv \lambda P_1 \modulo Q \text{ for some } \lambda \in K^\times\}/\sim
\]
where $g \sim c g$ for some $c\in K^\times$.

\subsection{Automorphisms}
Let $C=V(Q,P)$.
Let $\Aut_K(C)$ denote the group of automorphisms of $C$ over $K$
and write it as $\Aut(C)$ if $K$ is algebraically closed.
We have
\[
\Aut_K(C) = \{g\in \tilde \gO_\varphi(K) \mid g P \equiv \lambda P \modulo Q \text{ for some } \lambda \in K^\times\}/\sim
\]
where $g \sim c g$ for some $c\in K^\times$.

Assume that $Q$ is degenerate. 
Since we have $\tilde \gO_\varphi(K)=K^\times \gO_\varphi(K)$,
putting
\[
G_K:=\{g\in \gO_\varphi(K) \mid g P \equiv \lambda P  \modulo Q \text{ for some } \lambda \in K^\times\},
\]
we have
\[
\Aut_K(C) \ \simeq\  G_K/\{\pm 1_4\}.
\]

\begin{ex}\label{simplest curve}
Let $C$ be the curve defined by
\[
2yw+z^2=0 \quad \text{and}\quad x^3+y^3+w^3=0.
\]
Let $k$ be the algebraic closure of $\F_{25}$.
Using the notation in Section \ref{degenerate case},
we have
\begin{eqnarray*}
G_k &=& \{\diag(d,c,\pm 1,1/c) \mid \lambda=\pm 1, c^3 = \lambda, d^3 = \lambda\}\sqcup\\ 
&&\left\{ \diag(1,1,\pm 1,1)T(c)U(b)sU(a)\diag(d,1,1,1) \left|
\begin{array}{c}
\lambda=\pm 1,\\
a\in \F_{25} \text{ with } (3a^6+1)\ne 0,\\
b=a^{17}+a^{11}+2a^5,\\
c^3=\lambda(3a^6+1),\\
d^3=\lambda
\end{array}
\right.\right\}.
\end{eqnarray*}
To check this, for example use a Gr\"obner basis with indeterminate $\lambda, a,b,c,d$ for the equation $g P \equiv \lambda P  \modulo Q$,
see the web page of the first author \cite{HPkudo},
for a code (by Maple 2016).
Note that $|G| = 2\cdot 2\cdot 3\cdot 3+ 2\cdot 2\cdot (25-6)\cdot 3\cdot 3=720$.
In particular 
$|\Aut(C)| = 720/2=360$.
Moreover we see
that $\Aut(C)$ is isomorphic to the subgroup of $G_k$ consisting of elements with $\lambda = 1$.
\begin{eqnarray*}
\Aut(C) &\simeq& \{\diag(d,c,\pm 1,1/c) \mid c^3 = 1, d^3 = 1\}\sqcup\\ 
&&\left\{ \diag(1,1,\pm 1,1)T(c)U(b)sU(a)\diag(d,1,1,1) \left|
\begin{array}{c}
a\in \F_{25} \text{ with } (3a^6+1)\ne 0,\\
b=a^{17}+a^{11}+2a^5,\\
c^3=3a^6+1,\\
d^3=1
\end{array}
\right.\right\}.
\end{eqnarray*}
Clearly $\Aut(C)$ is decomposed as $\Aut(C) \simeq \mu_3 \times \Aut(C)_{d=1}$,
where $\mu_3$ is the group of cubic roots of one and $\Aut(C)_{d=1}$
is the subgroup of $\Aut(C)$ consisting of elements with $d=1$.
One can check that $\Aut(C)_{d=1}$ is isomorphic to the symmetric group $\fS_5$ of degree $5$.
Indeed $\Aut(C)_{d=1}$ contains
\[
s_1 = \begin{pmatrix}1&&&\\&&&1\\&&1&\\&1&&\end{pmatrix},\quad
s_2=\begin{pmatrix}1&&&\\&&&2-\sqrt{3}\\&&1&\\&2+\sqrt{3}&&\end{pmatrix},
\]
\[
s_3=\begin{pmatrix}1&&&\\ &-1&1&3\\ &1&3&1\\ &3&1&-1 \end{pmatrix},\quad
s_4=\begin{pmatrix}1&&&\\ &1&&\\ &&-1&\\ &&&1\end{pmatrix}
\]
with relations: $s_i^2=1$ and $(s_is_j)^2=1$ if $|i-j|>1$
and $(s_is_j)^2\ne 1$ and $(s_is_j)^3=1$ if $|i-j|=1$.
From this we see that any automorphism of $C$ is defined over $\F_{25}$.
\end{ex}

\begin{prop}
Let $C$ be any superspecial curve of genus $4$ in characteristic $5$. We have
\[
\Aut(C) \simeq \mu_3 \times \fS_5.
\]
\end{prop}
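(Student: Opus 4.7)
The plan is to reduce the statement to the computation already carried out in Example \ref{simplest curve} via the uniqueness result from Corollary \ref{MainCorollary}. Concretely, let $k$ denote an algebraic closure of $\F_5$, and let $C_0$ be the curve over $k$ defined by $2yw+z^2=0$ and $x^3+y^3+w^3=0$. Since $\Aut(C)$ is an isomorphism invariant, it suffices to show two things: first, that every superspecial curve of genus $4$ in characteristic $5$ is isomorphic to $C_0$ over $k$; second, that $\Aut(C_0)\simeq \mu_3\times \fS_5$.

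For the first point, Corollary \ref{MainCorollary} tells us that all superspecial genus-$4$ curves in characteristic $5$ are mutually isomorphic over an algebraically closed field, and the family of Theorem \ref{MainTheorem} specializes to $C_0$ by taking $a_1=a_2=1$ and $a_3=0$. Hence every superspecial $C$ of genus $4$ over $k$ satisfies $C\simeq_k C_0$, and therefore $\Aut(C)\simeq \Aut(C_0)$.

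For the second point, Example \ref{simplest curve} already exhibits $G_k$ explicitly via the Bruhat-type decomposition of the orthogonal similitude group from Section \ref{degenerate case}, and factors $\Aut(C_0)\simeq G_k/\{\pm 1_4\}$ as $\mu_3\times \Aut(C_0)_{d=1}$. What remains is the identification $\Aut(C_0)_{d=1}\simeq \fS_5$. The example furnishes four explicit involutions $s_1,s_2,s_3,s_4$ in $\Aut(C_0)_{d=1}$ whose pairwise products satisfy the Coxeter relations $(s_is_j)^{m_{ij}}=1$ of type $A_4$, namely $m_{ii}=1$, $m_{ij}=2$ for $|i-j|>1$, and $m_{ij}=3$ for $|i-j|=1$. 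By the standard presentation of $\fS_5$ this gives a surjection $\fS_5\twoheadrightarrow \langle s_1,s_2,s_3,s_4\rangle\subset \Aut(C_0)_{d=1}$, and since $|\Aut(C_0)_{d=1}|=|\Aut(C_0)|/|\mu_3|=360/3=120=|\fS_5|$ (computed from $|G_k|=720$ in the example), this map must be an isomorphism and also exhaust $\Aut(C_0)_{d=1}$.

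The only step requiring actual work is the verification of the Coxeter relations for the explicit matrices $s_1,\dots,s_4$; this is a direct matrix computation which the paper has already reduced to checking orders of $s_is_j$. Combining these pieces yields $\Aut(C)\simeq \Aut(C_0)\simeq \mu_3\times \fS_5$ for every superspecial curve $C$ of genus $4$ in characteristic $5$, as desired.
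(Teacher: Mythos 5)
Your proof is correct and follows exactly the same route as the paper: it reduces the statement to the single curve $C_0$ via Corollary \ref{MainCorollary} and then invokes the explicit computation of $\Aut(C_0) \simeq \mu_3 \times \fS_5$ from Example \ref{simplest curve}. The paper's proof is literally the two-sentence version of what you wrote, so your additional detail (the Coxeter-relation and order-counting argument) is just an unpacking of what the example already records.
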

\begin{proof}
In Example \ref{simplest curve} we have seen this for a curve.
The proposition follows from Corollary \ref{MainCorollary}.
\end{proof}

\begin{cor}
There are $21$ $\F_{25}$-isomorphism classes of superspecial curves of genus $4$ over $\F_{25}$.
\end{cor}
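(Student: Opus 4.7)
The plan is to combine Theorem~\ref{MainTheorem}, Corollary~\ref{MainCorollary}, and the preceding proposition via a standard Galois cohomology argument for twists.

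First I would note that by Theorem~\ref{MainTheorem}, every superspecial curve of genus $4$ over $\F_{25}$ is $\F_{25}$-isomorphic to some member of the family
\[
C_{a_1,a_2,a_3}:\quad 2yw+z^2=0,\quad x^3+a_1y^3+a_2w^3+a_3zw^2=0
\]
with $(a_1,a_2,a_3)\in \F_{25}^\times\times\F_{25}^\times\times\F_{25}$, so every $\F_{25}$-isomorphism class of superspecial genus-$4$ curves is represented here. By Corollary~\ref{MainCorollary}, all of these curves become isomorphic after base change to $\overline{\F_{25}}$; fix one of them, say $C_0$, as a geometric model. The set of $\F_{25}$-isomorphism classes of $\F_{25}$-forms of $C_0$ is then in natural bijection with
\[
H^1\bigl(\Gal(\overline{\F_{25}}/\F_{25}),\,\Aut_{\overline{\F_{25}}}(C_0)\bigr).
\]

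Next I would invoke the proposition just proved (via Example~\ref{simplest curve}): $\Aut_{\overline{\F_{25}}}(C_0)\simeq \mu_3\times\fS_5$, and moreover every geometric automorphism of $C_0$ is already defined over $\F_{25}$. Therefore the Galois action of $\Gal(\overline{\F_{25}}/\F_{25})\cong\widehat{\Z}$ on the automorphism group is trivial. For a finite group $G$ carrying the trivial action of a procyclic group, a continuous $1$-cocycle is the same as a homomorphism, determined by the image of Frobenius, and two such cocycles are cohomologous precisely when those images are conjugate in $G$. Hence
\[
H^1(\Gal(\overline{\F_{25}}/\F_{25}),\,\mu_3\times\fS_5)\;=\;\{\text{conjugacy classes of }\mu_3\times\fS_5\}.
\]

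It remains to count conjugacy classes. Since $\mu_3$ is abelian of order $3$, it contributes $3$ classes, and $\fS_5$ has one conjugacy class for each partition of $5$, namely the partitions $(5)$, $(4,1)$, $(3,2)$, $(3,1,1)$, $(2,2,1)$, $(2,1,1,1)$, $(1,1,1,1,1)$, giving $7$ classes. Multiplicativity over the direct product yields $3\cdot 7=21$, which is the claimed count.

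The only step that requires real care is the Galois-cohomology identification of $\F_{25}$-forms with conjugacy classes: one must check that $C_0$ is in fact defined over $\F_{25}$ (immediate from Theorem~\ref{MainTheorem}), that $\Aut$ is defined over $\F_{25}$ with trivial Frobenius action (the content of the last sentence of Example~\ref{simplest curve}), and that every superspecial genus-$4$ curve over $\F_{25}$ arises as such a form (which is exactly Corollary~\ref{MainCorollary} combined with Theorem~\ref{MainTheorem}). Once these three points are in place, the count $3\times 7 = 21$ is automatic and no further orbit-by-orbit analysis of the parameter space $(a_1,a_2,a_3)$ is needed.
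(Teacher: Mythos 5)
Your argument is correct and coincides with the paper's proof: both identify the $\F_{25}$-isomorphism classes with $H^1(\Gal(\overline{\F_{25}}/\F_{25}),\Aut(C_0))$, use the trivial Galois action established in Example 6.2.1 to reduce to conjugacy classes of $\mu_3\times\fS_5$, and count $3\cdot 7=21$. You have merely spelled out the cocycle-to-conjugacy-class identification and the partition count a bit more explicitly than the paper does.
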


\begin{proof}
Let $C$ be the curve as in Example \ref{simplest curve}.
Since the absolute Galois group $\Gamma$ of $\F_{25}$ acts trivially on $\Aut(C)$, we have
\begin{equation}\label{FormsViaAutomorphismGroup}
H^1(\Gamma, \Aut(C)) \simeq \Aut(C)/\text{conjugacy}.
\end{equation}
This set parametrizes the $\F_{25}$-forms of $C$, i.e., superspecial curves of genus $4$ over $\F_{25}$,
thanks to Corollary \ref{MainCorollary}.
The cardinarity of \eqref{FormsViaAutomorphismGroup}
is
$|\mu_3|\cdot |\fS_5/\text{conjugacy}|=21$. 
\end{proof}

It is possible to give a concrete list of 21 curves as in this Corollary:

\begin{ex}\label{CompleteRepresentativesChar5}
Take $\zeta = 1 + \sqrt{3}$ as a generator of $\F_{25}^\times$.
The curves
\begin{eqnarray*}
\text{\rm (I)}\qquad 2yw+z^2=0,& \zeta^i x^3 + \zeta^j y^3 +  w^3=0 & (0\le i \le 2,\quad 0\le j\le 3),\\
\text{\rm (II)}\qquad 2yw+z^2=0,& \zeta^i x^3 + \zeta^k y^3 +  w^3 + zw^2=0 & (0\le i\le 2,\quad k=0,2,3)
\end{eqnarray*}
are complete representatives of $\F_{25}$-isomorphism classes of
superspecial curves of genus $4$ over $\F_{25}$.
The number of $\F_{25}$-rational points on each curve is
\[
\begin{cases}
66 & \text{if}\quad \text{\rm (I)}\ \ (i,j)=(0,0), \\
36 & \text{if}\quad \text{\rm (I)}\ \ (i,j)=(2,1),(1,2),(1,3),(2,3), \\
31 & \text{if}\quad \text{\rm (II)}\ \ (i,k)=(0,2),(2,2), \\
26 & \text{if}\quad \text{\rm (II)}\ \ (i,k) = (0,0),(1,0),(2,0),(0,3),(1,3),(2,3), \\
21 & \text{if}\quad \text{\rm (I)}\ \ (i,j) = (0,1), (1,1), (0,2), (2,2), \\
16 & \text{if}\quad \text{\rm (II)}\ \ (i,k) = (1,2), \\
6  & \text{if}\quad \text{\rm (I)}\ \ (i,j) = (1,0), (2,0), (0,3).
\end{cases}
\]
Indeed, thanks to Theorem \ref{MainTheorem_intro}, considering the transformation $(x\mapsto x, y\mapsto \gamma^2y, z\mapsto \gamma z, w\mapsto w)$ for some $\gamma\in\F_{25}^\times$
and a constant multiplication to the whole of a cubic form, 
any superspecial curve of genus $4$ over $\F_{25}$ is isomorphic to
$V(2yw+z^2,ax^3+by^3+w^3 + czw^2)$ for $a,b\in\F_{25}^\times$ and $c=0,1$.
Taking account of a constant multiplication to $x$, we may assume that $a = \zeta ^i$ with $0\le i\le 2$. See the web page of the first author \cite{HPkudo},
for a code (by Maple 2016) finding the above complete representatives out of
$V(2yw+z^2,\zeta^i x^3+\zeta^jy^3+w^3+czw^2)$ for $0\le i\le 2$, $0\le j\le 23$ and $c=0,1$.
\end{ex}

\subsection{A remark on the mass formula}
Recall the mass formula:
\[
\sum_{(A,\Theta)} \frac{1}{|\Aut(A,\Theta)|}
= \prod_{i=1}^g\frac{(2i-1)!\zeta(2i)}{(2\pi)^{2i}}\prod_{i=1}^g(p^i+(-1)^i)
=  \frac{126139}{21772800},
\]
where $(A,\Theta)$ runs through the isomorphism classes  of principally polarized superspecial abelian varieties of dimension $g=4$ over an algebraically closed field of characteristic $p=5$.
Let $C$ be a(the) superspecial curve of genus $4$ in characteristic $5$. We have
\[
\frac{1}{|\Aut(\Jac(C),\Theta_C)|} =\frac{1}{|\Aut(C)\times\{\pm 1\}|} = \frac{1}{720}. 
\]
The curve occupies about 24\% of the mass.

\begin{prob}
How much of the mass do principally polarized superspecial abelian 4-folds
which are generalized Jacobians of possibly singular curves occupy?
\end{prob}

\appendix

\section{On the radical membership problem over a polynomial ring}\label{sec:radical}

In this appendix, we review a method to solve the \textit{radical membership problem} on polynomial rings over a field $K$ via a theory of Gr\"{o}bner bases.
Let $K$ be a field, and $S = K [ X_1, \ldots , X_n ]$ the polynomial ring with $n$ variables over $K$.
Throughout this appendix, we fix a term order $\succ$ on $S$.

\begin{propp}[\cite{CLO1}, Section 4.2, Proposition 8]\label{lem:radicalmenb}
Let $K$ be an arbitrary field, and $S = K [ X_1, \ldots , X_n ]$ the polynomial ring with $n$ variables over $K$.
For an ideal $I = \langle f_1, \ldots , f_s \rangle_S \subset S$ and a polynomial $f \in S$, $f \in \sqrt{I}$ if and only if $1 \in \widetilde{I}:=  \langle f_1, \ldots , f_s, 1 - Y f \rangle_{S^{\prime}}$, where $S^{\prime} := K [X_1, \ldots , X_n, Y]$.
\end{propp}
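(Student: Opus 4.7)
The plan is to prove the two directions separately. This statement is essentially the Rabinowitsch trick, and the key point is that it holds over an arbitrary field because no Nullstellensatz-type argument is needed — pure algebraic manipulation suffices.

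For the ``only if'' direction, I would start from the hypothesis $f \in \sqrt{I}$, which gives an integer $m \geq 1$ with $f^m \in I$, say $f^m = \sum_{i=1}^s g_i f_i$ with $g_i \in S$. The goal is to exhibit $1$ as an element of $\widetilde{I}$ explicitly. The identity I would use is
\[
1 \;=\; Y^m f^m + (1 - Y^m f^m) \;=\; Y^m \sum_{i=1}^s g_i f_i + (1 - Y f)\bigl(1 + Yf + Y^2 f^2 + \cdots + Y^{m-1} f^{m-1}\bigr),
\]
which lies in $\widetilde{I} \subset S'$ since the first summand is a combination of the $f_i$'s and the second is a multiple of $1 - Yf$. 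This step is entirely formal.

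For the ``if'' direction, suppose $1 \in \widetilde{I}$, so there exist $h_0, h_1, \ldots, h_s \in S'$ with
\[
1 \;=\; \sum_{i=1}^s h_i(X_1,\ldots,X_n,Y)\, f_i(X_1,\ldots,X_n) + h_0(X_1,\ldots,X_n,Y)\bigl(1 - Y f(X_1,\ldots,X_n)\bigr).
\]
Here is where I would use the main trick: pass to the localization $S_f = S[f^{-1}]$ (or equivalently to the fraction field of $S$, assuming $f \neq 0$; the case $f = 0$ is trivial since then $1 \in I$ already, forcing $\sqrt{I} = S$). Evaluating the identity above at $Y = 1/f$ in $S_f$ kills the second summand, yielding $1 = \sum_{i=1}^s h_i(X, 1/f) f_i$ in $S_f$. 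Now each $h_i(X, 1/f)$ has the shape $p_i(X)/f^{N_i}$ for some $p_i \in S$ and $N_i \geq 0$; choosing $N$ to be the maximum of the $N_i$ and clearing denominators by multiplying through by $f^N$ produces an honest equation $f^N = \sum_{i=1}^s \tilde{g}_i f_i$ inside $S$. This shows $f^N \in I$, hence $f \in \sqrt{I}$.

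The main obstacle — really the only subtle point — is making the substitution $Y = 1/f$ rigorous; the cleanest way is to invoke the universal property of the localization $S_f$, observing that the map $S' \to S_f$ sending $Y \mapsto 1/f$ is a well-defined ring homomorphism, and then track the denominators carefully when clearing. Everything else is routine bookkeeping. Note also that no hypothesis on $K$ (algebraic closure, perfectness, characteristic) is used anywhere, which matches the statement of the proposition.
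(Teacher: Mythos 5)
Your proof is correct and matches the paper's own (commented-out) argument essentially line for line: the forward direction uses the identity $1 = Y^m f^m + (1 - Yf)(1 + Yf + \cdots + Y^{m-1}f^{m-1})$, and the converse passes to the localization $S_f$ via $Y \mapsto 1/f$ and clears denominators. The only quibble is your aside on the case $f = 0$: there $1 - Yf = 1 \in \widetilde{I}$ automatically, and the conclusion $f \in \sqrt{I}$ holds simply because $0 \in \sqrt{I}$ for any ideal $I$, not because $1 \in I$ as you wrote — but this does not affect the argument.
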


\if 0
\begin{pf}
Let $f \in \sqrt{I}$, i.e., $f^m \in I$ for some $m \geq 1$.
Then we have
\begin{eqnarray}
1 &=& Y^m f^m + ( 1 - Y^m f^m ) \nonumber \\
   &= &Y^m f^m + ( 1 - Y f ) ( 1 + Y f + \cdots + Y^{m-1} f^{m-1} ) \in \widetilde{I}. \nonumber
\end{eqnarray}
Conversely suppose $1 \in \widetilde{I}$.
Then the element $1$ can be written as
\begin{equation}
1 = \sum_{i=1}^s g_i  f_i + g_{s+1} (1- Y f )
\end{equation}
for some $g_1, \ldots , g_{s+1} \in S^{\prime}$.
We have the following equality over the localization $S [1/f]$:
\begin{equation}
1 = \sum_{i = 1}^s g_i ( X_1, \ldots , X_n , 1/f ) \cdot f_i ( X_1, \ldots , X_n).
\end{equation} 
Put $m:= \mathrm{max} \{ \mathrm{deg}_{Y} (g_i) : 1 \leq i \leq s\}$, then we have
\begin{equation}
f^m = \sum_{i = 1}^s f^m g_i ( X_1, \ldots , X_n , 1/f ) \cdot f_i ( X_1, \ldots , X_n),
\end{equation} 
where $f^m g_i ( X_1, \ldots , X_n , 1/f ) \in S$ for each $1 \leq i \leq s$.
Hence it follows that $f^m \in I$.\qed
\end{pf}
\fi

Given an ideal $I \subset S$ with its explicit generator and a polynomial $f \in S$, we here give an algorithm to determine whether $f \in \sqrt{I}$ or not.

\begin{algorithm}[H] %
\caption{$\texttt{RadicalMembership} ( f, I )$}
\label{alg:RadicalMenbership}
\begin{algorithmic}[1]
\REQUIRE{A polynomial $f \in S$ and an ideal $I \subset K [X_1, \ldots , X_n]$}
\ENSURE{``$f \in \sqrt{I}$'' or ``$f \notin \sqrt{I}$''}
\STATE $S^{\prime}$ $\leftarrow$ the polynomial ring $K [ X_1, \ldots , X_n , Y]$
\STATE $\succ^{\prime}$ $\leftarrow$ an arbitrary term order on $X_1, \ldots , X_n , Y$
\STATE $\widetilde{I}$ $\leftarrow$ $\langle f_1, \ldots , f_s, 1 - Y f \rangle_{S^{\prime}}$
\STATE Compute the reduced Gr\"{o}bner basis $G$ of $\widetilde{I}$ with respect to $\succ^{\prime}$
\IF{$G = \{ 1 \}$}
	\STATE \textbf{return} ``$f \in \sqrt{I}$''
\ELSE
	\STATE \textbf{return} ``$f \notin \sqrt{I}$''
\ENDIF
\end{algorithmic}
\end{algorithm}

\if 0
\begin{lem}
Let $K$ be a field, and let $\overline{K}$ denote its algebraic closure.
Let $S := K [ X_1, \ldots , X_n ]$ and $\overline{S}:= \overline{K} [ X_1, \ldots , X_n ]$ be the polynomial rings with $n$ variables over $K$ and $\overline{K}$, respectively.
For polynomials $f_1, \ldots , f_s \in S$, if $\{f_1, \ldots , f_s\}$ is a Gr\"{o}bner basis of the ideal $I := \langle f_1, \ldots , f_s \rangle_S \subset S$ with respect to a fixed term order $\succ$, then it is also a Gr\"{o}bner basis of the ideal $\overline{I} := \langle f_1, \ldots , f_ s \rangle_{\overline{S}} \subset \overline{S}$ with respect to the same order.
\end{lem}

\begin{pf}
According to Buchberger's criterion, $G := \{ f_1, \ldots , f_s \}$ is a Gro\"{o}bner basis of the ideal $\overline{I} := \langle f_1, \ldots , f_ s \rangle_{\overline{S}}$ with respect to a fixed term order $\succ$ if and only if the remainder of dividing $S_{\succ} ( f_i, f_j )$ by $G$ with respect to $\succ$ is equal to $0$ for each pair of $(i, j)$ with $1 \leq i \leq j \leq s$, where $S_{\succ} ( f_i, f_j )$ denotes the $S$-polynomial of $f_i$ and $f_j$ with respect to $\succ$.
Since the operations to determine the $S$-polynomials and the remainders are closed in fields to which the coefficients of the input polynomials belong (in this case, $K$ is such a field), $G$ is a Gr\"{o}bner basis of the ideal $\overline{I} \subset \overline{S}$ with respect $\succ$.\qed
\end{pf}
\fi

\section{Computing Hasse-Witt matrices of complete intersections}\label{sec:HWgeneral}

In this appendix, we give a method for computing the Hasse-Witt matrix of a curve defined as a complete intersection
via Koszul complex.
This gives a generalization of the method given in Section \ref{subsec:HWgenus4} of this paper.

\subsection{Regular sequences of modules}

This subsection is devoted to a review of some general facts on regular sequences of modules.

\begin{defin}
Let $R$ be a commutative ring with unity, and $M$ an $R$-module.
A sequence $( f_1, \ldots , f_t ) \in R^t$ is said to be an $M$-{\it regular sequence} (or $M$-{\it regular}) if
\begin{enumerate}
\item $M \big/ ( f_1, \ldots , f_t ) M \neq 0$, and
\item For $1 \leq i \leq t$, $f_i$ is a nonzerodivisor in $M \big/ ( f_1, \ldots , f_{i-1} ) M$, i.e.,
there does not exist any $0 \neq x \in M \big/ ( f_1, \ldots , f_{i-1} ) M$ such that $f_i x = 0$.
\end{enumerate}
\end{defin}

\begin{lem}[\cite{Eisenbud_c}, Corollary 17.7]\label{lem:regular1}
Let $R$ be a local commutative ring with unity, and $M$ an $R$-module.
If $\langle f_1, \ldots , f_t \rangle_R \subset R$ is a proper ideal containing an $M$-regular sequence of length $t$, then $( f_1, \ldots , f_t )$ is an $M$-regular sequence.
\end{lem}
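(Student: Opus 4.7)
Under the classical hypotheses implicit here (Noetherian $R$, finitely generated $M \ne 0$), Nakayama guarantees that $IM \ne M$ whenever the proper ideal $I$ lies in the maximal ideal, so the only real content is the regularity of the sequence itself. I would handle the case $t=1$ by hand and then invoke the Koszul--depth machinery for $t \ge 2$.

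For $t=1$, I would write the given $M$-regular element of $I = \langle f_1 \rangle$ as $g = r f_1$; if $f_1 m = 0$ for some $m \in M$, then $g m = r f_1 m = 0$, forcing $m = 0$ by the $M$-regularity of $g$. Hence $f_1$ is a nonzerodivisor on $M$, and $f_1 M \ne M$ holds by Nakayama since $f_1 \in \mathfrak m$.

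For general $t$, the plan is to invoke the Koszul-homology characterization of depth: for any generating sequence $\mathbf f = (f_1, \ldots, f_t)$ of $I$,
\[
\operatorname{depth}_I M \;=\; t \,-\, \sup\bigl\{\, i \ge 0 \,:\, H_i(K_\bullet(\mathbf f; M)) \ne 0 \,\bigr\},
\]
so that the depth of $I$ on $M$ is an invariant of the pair $(I, M)$ equal to the length of any maximal $M$-regular sequence in $I$. Our hypothesis supplies such a sequence of length $t$, so $\operatorname{depth}_I M \ge t$, and the displayed formula forces $H_i(K_\bullet(\mathbf f; M)) = 0$ for every $i \ge 1$. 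The standard Koszul criterion, namely that vanishing of positive-degree Koszul homology is equivalent to $M$-regularity of the defining sequence (given $M/IM \ne 0$), then yields that $(f_1, \ldots, f_t)$ is itself an $M$-regular sequence.

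The hard part will be the depth--Koszul formula displayed above: this is the substantive input borrowed from Eisenbud Chapter 17, proved by an induction on $t$ together with the long exact sequence associated to the short exact sequence of Koszul complexes $0 \to K_\bullet(f_1, \ldots, f_{t-1}; M) \to K_\bullet(\mathbf f; M) \to K_\bullet(f_1, \ldots, f_{t-1}; M)[-1] \to 0$. Granting this ingredient, our lemma is an essentially immediate corollary, since any generating set of $I$ of length equal to $\operatorname{depth}_I M$ is automatically $M$-regular.
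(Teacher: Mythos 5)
The paper does not supply its own proof of this lemma: it is cited verbatim from Eisenbud's Corollary~17.7, so there is no in-text argument to compare against. Your proposal is correct and is the standard proof underlying that reference: the Koszul--depth theorem gives $\mathrm{depth}_I M = t - \sup\{\,i : H_i(K_\bullet(\mathbf f;M)) \ne 0\,\}$, the hypothesis supplies $\mathrm{depth}_I M \ge t$, forcing the positive Koszul homology of $(f_1,\ldots,f_t)$ to vanish, and the local Koszul criterion (vanishing of positive Koszul homology together with $M/IM \ne 0$ implies $M$-regularity) concludes. You are also right to insert the Noetherian and finitely-generated-nonzero hypotheses explicitly; the paper's statement omits them, but they are needed both for the Nakayama step ($IM \ne M$) and for the Koszul machinery itself, and they are satisfied in the paper's intended application (localizations of a polynomial ring over a field, $M$ finitely generated). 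One point worth keeping sharp: the depth formula by itself only pins down the common length of maximal $M$-regular sequences in $I$; passing from ``$\mathrm{depth}_I M = t$'' to ``the specific generators $(f_1,\ldots,f_t)$ form a regular sequence'' is exactly the Koszul criterion you invoke, which is proved by its own Nakayama-driven induction and is not a formal corollary of the depth formula. You treat it as a known black box from Eisenbud Chapter~17, which is appropriate here and introduces no circularity, so the plan is sound.
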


\begin{lem}[\cite{Eisenbud_c}, Corollary 17.8]
Let $R$ be a commutative ring with unity, and $M$ an $R$-module.
If $( f_1, \ldots , f_t ) \in R^t$ is an $M$-regular sequence, then $( f_1^n, \ldots , f_t^n )$ is an $M$-regular sequence for any $n > 0$.
\end{lem}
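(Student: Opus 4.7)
My plan is to reduce, by induction on $t$, to the case of raising a single element at a time; and then to handle that case via a product-of-regular-elements lemma. First I would establish the following key intermediate fact: if $(f_1, f_2, \ldots, f_t)$ and $(g_1, f_2, \ldots, f_t)$ are both $M$-regular, then $(f_1 g_1, f_2, \ldots, f_t)$ is $M$-regular. The argument is that $f_1 g_1$ is a nonzerodivisor on $M$ since each factor is, and that one can build a short exact sequence
\[
0 \longrightarrow M/f_1 M \xrightarrow{\ g_1\ } M/f_1 g_1 M \longrightarrow M/g_1 M \longrightarrow 0,
\]
whose leftmost map is injective precisely because $g_1$ is a nonzerodivisor on $M$. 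Since $(f_2, \ldots, f_t)$ is regular on both flanking modules by hypothesis, the standard fact that regularity passes to the middle term of a short exact sequence will yield the conclusion.

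Applying this key lemma with $g_1 = f_1$ and iterating would give, by induction on $n$, that $(f_1^n, f_2, \ldots, f_t)$ is $M$-regular for every $n \geq 1$. This will be the hook needed for the outer induction on $t$. The base case $t = 1$ is immediate: if $f_1 x = 0$ forces $x = 0$, then $f_1^n x = 0$ implies $f_1 (f_1^{n-1} x) = 0$, hence $f_1^{n-1} x = 0$, and iterating gives $x = 0$. For $t \geq 2$, I would set $M' := M/f_1^n M$; by the previous step the sequence $(f_2, \ldots, f_t)$ is $M'$-regular, so by induction hypothesis on $t$ the sequence $(f_2^n, \ldots, f_t^n)$ is $M'$-regular. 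Together with $f_1^n$ being a nonzerodivisor on $M$, this gives that $(f_1^n, f_2^n, \ldots, f_t^n)$ is $M$-regular. The non-vanishing requirement $M/(f_1^n, \ldots, f_t^n) M \neq 0$ is automatic from the containment $(f_1^n, \ldots, f_t^n) \subseteq (f_1, \ldots, f_t)$ and the induced surjection of quotients.

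The main obstacle will be the key lemma itself: one must carefully set up the short exact sequence displayed above and then invoke the (standard but not entirely trivial) fact that $M$-regularity of a sequence is inherited from the outer terms to the middle term of a short exact sequence, which itself is proved by a sub-induction on the length of the sequence using a snake-lemma chase at each stage. Once this mechanism is in place, everything else is bookkeeping via the nested inductions on $n$ and $t$.
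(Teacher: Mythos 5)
Your argument is correct, but it follows a genuinely different path from the paper's. The paper's proof proceeds by induction on $t$, reduces to showing that $f_t$ is a nonzerodivisor on $M/(f_1^n,\ldots,f_{t-1}^n)M$, and then localizes at a prime $P\supseteq\mathrm{Ann}(M)$: after discarding the trivial cases where some $f_i\notin P$, one may assume $R$ is local with all the $f_i$ in the maximal ideal, so the permutation lemma for regular sequences (Eisenbud, Corollary~17.7, recorded as the preceding lemma) applies; the $t=1$ step gives $(f_1,\ldots,f_{t-1},f_t^n)$ $M$-regular, the permutation lemma moves $f_t^n$ to the front, and one finishes by applying the induction hypothesis to $M/f_t^nM$ and permuting once more. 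Your route instead stays entirely global: the key lemma --- that $(f_1 g_1, f_2, \ldots, f_t)$ is $M$-regular whenever both $(f_1, f_2, \ldots, f_t)$ and $(g_1, f_2, \ldots, f_t)$ are --- rests on the short exact sequence
\[
0 \longrightarrow M/f_1 M \xrightarrow{\ g_1\ } M/f_1 g_1 M \longrightarrow M/g_1 M \longrightarrow 0,
\]
which is left-exact because $g_1$ is a nonzerodivisor on $M$, together with the fact (proved by a sub-induction via the snake lemma) that a sequence regular on both outer terms of a short exact sequence is regular on the middle term. Iterating with $g_1=f_1$ gives $(f_1^n,f_2,\ldots,f_t)$ $M$-regular, and passing to $M/f_1^nM$ closes the outer induction on $t$. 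The tradeoff is clear: the paper's route is short because the permutation lemma is already on hand, at the cost of a localization detour; your route needs neither localization nor that lemma, at the cost of establishing the middle-term inheritance fact, which you correctly flag and sketch. Both yield complete proofs.
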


\begin{proof}
We show the statement by the induction on $t$.
Consider the case of $t = 1$.
Let $f  \in R$ be a polynomial such that $M \neq f M$ and $f$ is a nonzerodivisor in $M$.
Obviously we have $M \neq f^n M$.
Assume $f^n x = 0$ in $M$ for some $x \in M$.
Since $f$ is a nonzerodivisor, it follows that $f^{n-1} x$ equals $0 \in M$, and recursively $x = 0$.

Consider the case of $t > 1$.
Since $( f_1^n, \ldots , f_t^n ) M \subset ( f_1, \ldots , f_t ) M$, we have $( f_1^n, \ldots , f_t^n ) M \neq M$.
Here it suffices to show that $f_t$ is a nonzerodivisor in $M \big/ ( f_1^n, \ldots , f_{t-1}^n ) M$.
Indeed, if $f_t$ is a nonzerodivisor in $M \big/ ( f_1^n, \ldots , f_{t-1}^n ) M$ and
if $f_t^n x = 0$ in $M \big/ ( f_1^n, \ldots , f_{t-1}^n ) M$ for some $x \in M$, then $f_t^{n-1} x = 0$ in $M \big/ ( f_1^n, \ldots , f_{t-1}^n ) M$, and recursively $x = 0$ in $M \big/ ( f_1^n, \ldots , f_{t-1}^n ) M$.
Let $P$ be a prime ideal of $R$ with $\mathrm{Ann} (M) \subset P$.
We consider the localization
\[
\left( M \big/ ( f_1^n, \ldots , f_{t-1}^n ) M \right)_P \simeq M_P / ( f_1^n, \ldots , f_{t-1}^n ) M_P \quad \mbox{(as an $R_P$-module)}
\]
at $P$.
Note that if $f_t$ is a nonzerodivisor in $M_P \big/ ( f_1^n, \ldots , f_{t-1}^n ) M_P$, then $f_t$ is a nonzerodivisor in $M \big/ ( f_1^n, \ldots , f_{t-1}^n ) M$.
If there exists $1 \leq i \leq t$ such that $f_i \notin P$, then the either $M_P = ( f_1^n , \ldots , f_{t-1}^n ) M_P$ or $f_t \in ( R_P )^{\times}$, and thus the result holds.
From this, we may assume that $R$ is a local ring and that its maximal ideal contains $f_i$ for all $1 \leq i \leq t$.
The condition that $( f_1, \ldots , f_t )$ is $M$-regular implies that $( f_1, \ldots , f_{t-1}, f_t^n )$ is $M$-regular.
Applying Lemma \ref{lem:regular1}, it is concluded that $( f_t^n, f_1, \ldots , f_{t-1} ) $ is an $M$-regular sequence.
Consequently, repeating the argument, $( f_1^n, \ldots , f_t^n ) $ is an $M$-regular sequence.
\end{proof}


\subsection{Computing Hasse-Witt matrices of complete intersections via Koszul complex}

For non-zero polynomials $f_1, \ldots , f_t \in S = K [ X_0, \ldots , X_r]$,
the Koszul complex $K ( f_1, \ldots , f_t)$ is defined as follows.
For an index $i$, we define the following free $S$-module of rank $\binom{t}{i}$:
\[
K_i ( f_1, \ldots , f_t ) := \bigoplus_{1 \leq j_1 < \cdots < j_i \leq t} S \mathbf{e}_{j_1 \ldots j_i},
\]
where $\mathbf{e}_{j_1, \ldots , j_i}$'s are basis vectors.
We define the homomorphism $\varphi_i : K_i ( f_1, \ldots , f_t ) \longrightarrow K_{i-1} ( f_1, \ldots , f_t )$ by putting
\[
\varphi_i (\mathbf{e}_{j_1 \ldots j_i} ) := \sum_{k=1}^i (-1)^{k-1} f_{j_k} \mathbf{e}_{j_1 \ldots \hat{j_k} \ldots j_i}.
\]
The sequence $K (f_1, \ldots , f_t):=( K_i ( f_1, \ldots , f_t), \varphi_i )_i$ is a complex of free $S$-modules.
Note that any permutation of $f_1, \ldots , f_t$ gives an isomorphic Koszul complex.
It is known that if $( f_1, \ldots , f_t )$ is $S$-regular, then the Koszul complex is exact, that is, it defines a free resolution of $S / \langle f_1 , \ldots , f_t \rangle_S$, see Remark 5.30 in \cite{DL}. 

In this subsection, we give a method to compute the Hasse-Witt matrix of a curve via Koszul complex.
We first define the Koszul complex of {\it graded} free $S$-modules.
For homogeneous polynomials $f_1, \ldots , f_t \in S \smallsetminus \{ 0 \}$
and an index $i$, we define the following graded free $S$-module of rank $\binom{t}{i}$:
\[
K_i ( f_1, \ldots , f_t )_{\mathrm{grd}} := \bigoplus_{1 \leq j_1 < \cdots < j_i \leq t} S (-d_{j_1 \ldots j_i}) \mathbf{e}_{j_1 \ldots j_i},
\]
where we set $d_{j_1 \ldots j_i} := \sum_{k=1}^i \mathrm{deg} ( f_{j_k} )$.
We define the homomorphism $\varphi_i : K_i ( f_1, \ldots , f_t )_{\mathrm{grd}} \longrightarrow K_{i-1} ( f_1, \ldots , f_t )_{\mathrm{grd}}$ by putting
\[
\varphi_i (\mathbf{e}_{j_1 \ldots j_i} ) := \sum_{k=1}^i (-1)^{k-1} f_{j_k} \mathbf{e}_{j_1 \ldots \hat{j_k} \ldots j_i}.
\]
The sequence $K (f_1, \ldots , f_t)_{\mathrm{grd}} :=( K_i ( f_1, \ldots , f_t)_{\mathrm{grd}}, \varphi_i )_i$ is a (co)chain complex of graded free $S$-modules.
As in Section \ref{subsec:HWgenus4}, for $n \geq 1$ we denote by $\varphi_i^{(n)}$ the $i$-th differential of the complex $K ( f_1^n , \ldots , f_t^n)_{\mathrm{grd}}$.
Here we define a homomorphism $\psi_i : K_i ( f_1^n, \ldots , f_t^n )_{\mathrm{grd}} \longrightarrow K_i ( f_1, \ldots , f_t)_{\mathrm{grd}}$ as follows:
\[
\psi_i (\mathbf{e}_{j_1 \ldots j_i} ) := (f_{j_1} \cdots f_{j_i})^{n-1} \mathbf{e}_{j_1 \ldots j_i}.
\]

From now on, we assume that the sequence $( f_1, \ldots , f_t )$ is $S$-regular, and suppose $\mathrm{gcd} (f_i, f_j) = 1$ for $i \neq j$.
To simplify the notation, we set
\[
\begin{split}
M_i^{(n)} := K_i ( f_1^n, \ldots , f_t^n )_{\mathrm{grd}}, \quad \mbox{and} \ I_n := \langle f_1^n, \ldots , f_t^n \rangle_S, \\
M_i := K_i ( f_1, \ldots , f_t )_{\mathrm{grd}}, \quad \mbox{and} \ I := \langle f_1, \ldots , f_t \rangle_S. 
\end{split}
\]
We have the following lemma.

\begin{lem}\label{lem:3}
The following diagram of homomorphisms of graded $S$-modules commutes, and each horizontal sequence is exact:
$$\xymatrix{
0 \ar[r]^{\varphi_{t+1}^{( n )}} & M_t^{(n)}  \ar[d]^{\psi_{t}} \ar[r]^(0.6){\varphi_{t}^{( n )}} \ &  \cdots   \ar[r]^(0.5){\varphi_{2}^{( n )}} &             M_1^{(n)} \ar[d]^{\psi_{1}} \ar[r]^{\varphi_{1}^{( n )}} & M_0^{(n)} = S  \ar[d]^{\psi_{0}} \ar[r]^(0.5){\varphi_0^{( n )}} & M_{-1}^{(n)} := S / I_n                  \ar[d]^{\psi} \ar[r]    & 0            \\
0 \ar[r]^{\varphi_{t+1}^{( 1 )}} & M_t                                    \ar[r]^(0.6){\varphi_t^{( 1 )}} & \cdots                                   \ar[r]^(0.5){\varphi_2^{( 1 )}} & M_1  \ar[r]^{\varphi_{1}^{( 1 )}} & M_0 = S         \ar[r]^(0.5){\varphi_0^{( 1 )}} & M_{-1} := S / I     \ar[r]    & 0            
}$$
where $\psi_0$ is the identity map on $S$, and $\psi$ is the homomorphism defined by $h + I_n \mapsto h + I$.
\end{lem}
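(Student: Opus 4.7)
The plan is to prove the two assertions of Lemma \ref{lem:3} separately, namely exactness of the two horizontal rows and commutativity of the squares formed by the vertical maps $\psi_i$, then finally to check that each $\psi_i$ is graded of degree zero.

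First I would address the exactness of the bottom row. Since the hypothesis is that $(f_1,\ldots,f_t)$ is an $S$-regular sequence, the Koszul complex $K(f_1,\ldots,f_t)_{\mathrm{grd}}$ is known to be acyclic, hence it resolves $S/I$ (as cited after the definition of the Koszul complex, see \cite[Remark 5.30]{DL}). For the top row, the key is the lemma proved just above Section B.2: if $(f_1,\ldots,f_t)$ is $S$-regular, then so is $(f_1^n,\ldots,f_t^n)$ for every $n>0$. Applying the same acyclicity result to this new regular sequence yields that $K(f_1^n,\ldots,f_t^n)_{\mathrm{grd}}$ is a free resolution of $S/I_n$; attaching $M_{-1}^{(n)}=S/I_n$ on the right via the canonical projection gives the exact top row.

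Next I would verify commutativity of each square by a direct calculation on basis elements. For $1\le i\le t$, working on a generator $\mathbf{e}_{j_1\ldots j_i}$, one computes
\[
\psi_{i-1}\bigl(\varphi_i^{(n)}(\mathbf{e}_{j_1\ldots j_i})\bigr) = \sum_{k=1}^i (-1)^{k-1} f_{j_k}^{\,n}\,(f_{j_1}\cdots \widehat{f_{j_k}}\cdots f_{j_i})^{n-1}\,\mathbf{e}_{j_1\ldots \widehat{j_k}\ldots j_i},
\]
\[
\varphi_i^{(1)}\bigl(\psi_i(\mathbf{e}_{j_1\ldots j_i})\bigr) = \sum_{k=1}^i (-1)^{k-1}\, f_{j_k}\,(f_{j_1}\cdots f_{j_i})^{n-1}\,\mathbf{e}_{j_1\ldots \widehat{j_k}\ldots j_i},
\]
and the two right-hand sides agree termwise because $f_{j_k}^{\,n}\,(f_{j_1}\cdots \widehat{f_{j_k}}\cdots f_{j_i})^{n-1}=f_{j_k}\,(f_{j_1}\cdots f_{j_i})^{n-1}$. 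The rightmost square commutes tautologically: for $h\in S$ we have $\psi(\varphi_0^{(n)}(h))=h+I=\varphi_0^{(1)}(\psi_0(h))$, since $\psi_0=\mathrm{id}_S$ and $I_n\subset I$.

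Finally, I would check that the vertical maps respect the grading, i.e.\ are homomorphisms of degree zero. The source $M_i^{(n)}$ has summand $S(-n\,d_{j_1\ldots j_i})$ and the target $M_i$ has summand $S(-d_{j_1\ldots j_i})$; multiplication by $(f_{j_1}\cdots f_{j_i})^{n-1}$, which is homogeneous of degree $(n-1)d_{j_1\ldots j_i}$, is then exactly the degree-zero graded homomorphism between these two twisted free modules. The main obstacle in this lemma is essentially bookkeeping rather than conceptual: keeping track of the Koszul signs and the grading twists carefully enough to see that both diagrams commute and that $\psi_i$ lives in the right graded category. Once the regularity of $(f_1^n,\ldots,f_t^n)$ is invoked, everything else is a direct verification.
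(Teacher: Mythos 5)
Your proof is correct and follows the route the paper implicitly intends. The paper itself states Lemma B.3 without proof (mirroring the two-equation case, where Lemma 3.1.3 is justified simply by ``By Lemma 3.1.1, the following lemma also holds''), and you fill in exactly the ingredients the paper has set up just above: acyclicity of the Koszul complex of a regular sequence, the lemma that $(f_1^n,\ldots,f_t^n)$ remains $S$-regular, and a direct basis-element calculation for the commutativity of the squares together with the degree-count showing each $\psi_i$ is graded of degree zero.
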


Let $K$ be a perfect field with $\mathrm{char} ( K )  = p > 0$.
Let $\mathbf{P}^r = \mathrm{Proj} ( S )$ denote the projective $r$-space for the polynomial ring $S := K [ X_0, \ldots, X_r]$.
For a graded $S$-module $M$, let $\widetilde{M}$ denote the sheaf associated with $M$ on $\mathbf{P}^r$.
Now we describe a method to compute the Hasse-Witt matrix of the curve $C = V ( f_1, \ldots , f_{r-1} ) \subset \mathbf{P}^r$ for given $p$ and $r-1$ homogeneous polynomials $f_1, \ldots, f_{r-1} \in S$ with $\mathrm{gcd} ( f_i, f_j) = 1$ in $S$ for $1 \leq i < j \leq r-1$ such that $( f_1, \ldots , f_{r-1})$ is $S$-regular.
Note that such a $C$ is said to be a complete intersection in $\mathbf{P}^r$.
We use the same notation as in Lemma \ref{lem:3}, and take $n = p$.
Put $\varphi_i := \varphi_i^{(1)}$ and
\begin{equation}
\Phi_i:=\widetilde{\varphi_i},\quad \Phi_i^{(p)}:= \widetilde{\varphi_i^{(p)}},\quad \Psi:=\widetilde{\psi},\quad \mbox{and}\quad \Psi_i:=\widetilde{\psi_i}.
\end{equation}
By Lemma \ref{lem:3}, the following diagram commutes:
$$\xymatrix{
H^1 \left( C, {\cal O}_C \right)                  \ar[dd]_{F^{\ast}} \ar[rr]^(0.5){\cong} \ar[rd]^{{(F_1 |_{C^p})}^{\ast}} &  & H^2 ( \mathbf{P}^r, \widetilde{I} )     \ar[r]^(0.45){\cong}              \ar[d]^{F_1^{\ast}}  & \mathrm{Ker} \left( H^r ( \Phi_{r-1} ) \right) \ar[d]^{F_1^{\ast}} \\
& H^1 \left( C^p, {\cal O}_{C^p} \right) \ar[r]^{\cong} \ar[ld]^{H^1 ( \Psi )} & H^2 ( \mathbf{P}^r, \widetilde{I_p} ) \ar[r]^(0.45){\cong} \ar[d]^{H^2 ( \Psi_0 )} & \mathrm{Ker} \left( H^r ( \Phi_{r-1}^{( p )} ) \right) \ar[d]^{H^r ( \Psi_{r-1} )} \\
H^1 \left( C, {\cal O}_C \right)                  \ar[rr]^(0.5){\cong} &  & H^2 ( \mathbf{P}^r, \widetilde{I} ) \ar[r]^(0.45){\cong} & \mathrm{Ker} \left( H^r ( \Phi_{r-1} )  \right) 
}$$
where $F_1$ (resp. $F$) is the Frobenius morphism on $\mathbf{P}^r$ (resp. $C$) and $C^p :=  V \left( f_1^p, \ldots, f_{r-1}^p \right)$.
In a similar way to the proof of Proposition 3.1.4 in this paper, we have the following proposition.

\begin{prop}\label{prop:HW_more_general}
Let $K$ be a perfect field with $\mathrm{char} ( K )  = p > 0$.
Let $f_1, \ldots , f_{r-1}$ be homogeneous polynomials with $d_{j_1 \ldots j_{r-2}} \leq r$ for all $1 \leq j_1 < \cdots < j_{r-2} \leq r-1$ such that $\mathrm{gcd}( f_i, f_j ) = 1$ in $S:=K [ X_0, \ldots , X_r]$ for $i \neq j$. 
Suppose that $( f_1, \ldots , f_{r-1})$ is an $S$-regular sequence.
Let $C = V ( f_1, \ldots , f_{r-1} )$ be the curve defined by the equations $f_1 = 0, \ldots ,f_{r-1} = 0$ in $\mathbf{P}^r$.
Write $(f_1 \cdots f_{r-1} )^{p-1} =  \sum c_{i_0, \ldots , i_{r}} X_0^{i_0} \cdots X_r^{i_r}$ and
\begin{equation}
\{ (k_0, \ldots , k_r) \in ( \mathbb{Z}_{<0} )^{r+1} : \sum_{i=0}^r k_i  = - \sum_{j=1}^{r-1} \mathrm{deg} ( f_j ) \} = \{ (k_0^{(1)}, \ldots , k_r^{(1)}), \ldots , (k_0^{(g)}, \ldots , k_r^{(g)} ) \}, \nonumber
\end{equation}
where we note that $g = \mathrm{dim}_K H^1 (C, \mathcal{O}_C)$.
Then the Hasse-Witt matrix of $C$ is given by
\begin{equation}
\left[
\begin{array}{ccc}
	c_{- k_0^{(1)} p + k_0^{(1)}, \ldots , - k_r^{(1)} p + k_r^{(1)}} & \cdots & c_{- k_0^{(g)} p + k_0^{(1)}, \ldots , - k_r^{(g)} p + k_r^{(1)}} \\
\vdots & & \vdots \\
	c_{- k_0^{(1)} p + k_0^{(g)}, \ldots , - k_r^{(1)} p + k_r^{(g)}} & \cdots & c_{- k_0^{(g)} p + k_0^{(g)}, \ldots , - k_r^{(g)} p + k_r^{(g)}}
\end{array}
\right]. \nonumber 
\end{equation}
\end{prop}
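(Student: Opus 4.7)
The plan is to generalize the argument behind Proposition \ref{prop:HW_general} by replacing the two-term resolution of Lemma \ref{lem:2} with the Koszul resolution of Lemma \ref{lem:3}, and then to read off the Hasse-Witt matrix as the matrix of multiplication by $(f_1\cdots f_{r-1})^{p-1}$ on the top Čech cohomology of $\mathbf{P}^r$. The regularity of $(f_1,\ldots,f_{r-1})$ is what makes the Koszul complex $K(f_1,\ldots,f_{r-1})_{\mathrm{grd}}$ a graded free resolution of $S/I$; sheafifying yields a resolution of the ideal sheaf $\widetilde{I}$ by direct sums of twists of $\mathcal{O}_{\mathbf{P}^r}$, and the same holds for $I_p$ via $K(f_1^p,\ldots,f_{r-1}^p)_{\mathrm{grd}}$. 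The commutative diagram displayed just before the statement, together with the comparison map $\psi_\bullet$, is then available.

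The first step is the cohomological identification $H^1(C,\mathcal{O}_C)\cong\operatorname{Ker}\!\bigl(H^r(\Phi_{r-1})\bigr)\cong H^r\!\bigl(\mathbf{P}^r,\mathcal{O}_{\mathbf{P}^r}(-\textstyle\sum_j\deg f_j)\bigr)$. I would obtain this by splitting the resolution of $\widetilde{I}$ into short exact sequences and chasing long exact sequences in cohomology. The key input is Serre's vanishing on $\mathbf{P}^r$: intermediate cohomologies $H^j(\mathbf{P}^r,\mathcal{O}(d))$ for $1\le j\le r-1$ vanish for every $d$, and $H^r(\mathbf{P}^r,\mathcal{O}(-d))=0$ whenever $d\le r$. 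The hypothesis $d_{j_1\cdots j_{r-2}}\le r$ is exactly what forces $H^r(\widetilde{M_i})=0$ for all $i\le r-2$ (the smaller subsets only have smaller total degree), so the edge of the iterated cohomology computation collapses to $H^r(\widetilde{M_{r-1}})=H^r(\mathcal{O}_{\mathbf{P}^r}(-\sum_j\deg f_j))$, up to the kernel of $H^r(\Phi_{r-1})$. This is the step where the proof genuinely leaves the setting of Proposition \ref{prop:HW_general}, and is the main technical obstacle: in the rank-two case of Lemma \ref{lem:1} the resolution had length two so one short exact sequence sufficed, whereas now one must iterate through $r-1$ steps and verify at each level that no spurious $H^r$ survives.

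The second step is to compute the Frobenius action. Under the identification above, the Frobenius $F^*$ on $H^1(C,\mathcal{O}_C)$ is intertwined with the composition of the naive Frobenius pullback $F_1^*$ on $H^r(\mathbf{P}^r,\mathcal{O}(-p\sum_j\deg f_j))$ followed by the comparison map $H^r(\Psi_{r-1})$, which on the single top Koszul summand $S(-\sum_j\deg f_j)$ is nothing but multiplication by $(f_1\cdots f_{r-1})^{p-1}$. This is the content of Lemma \ref{lem:3}, and verifying it is purely a matter of unwinding definitions of the Koszul differentials $\varphi_i^{(p)}$ and the maps $\psi_i$.

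The last step is to evaluate this composition on the standard Čech basis $\{X_0^{k_0}\cdots X_r^{k_r} : k_i<0,\ \sum_i k_i=-\sum_j\deg f_j\}$ of $H^r(\mathbf{P}^r,\mathcal{O}(-\sum_j\deg f_j))$. Writing $(f_1\cdots f_{r-1})^{p-1}=\sum c_{i_0,\ldots,i_r}X_0^{i_0}\cdots X_r^{i_r}$ and expanding
\[
(f_1\cdots f_{r-1})^{p-1}\cdot F_1^{*}\!\bigl(X_0^{k_0^{(i)}}\cdots X_r^{k_r^{(i)}}\bigr)=\sum c_{i_0,\ldots,i_r}X_0^{i_0+p k_0^{(i)}}\cdots X_r^{i_r+p k_r^{(i)}},
\]
only those terms whose exponents $(i_0+pk_0^{(i)},\ldots,i_r+pk_r^{(i)})$ are again negative and sum to $-\sum_j\deg f_j$ survive in $H^r$; these are precisely indexed by the basis elements $(k_0^{(j)},\ldots,k_r^{(j)})$, and the coefficient is $c_{-k_0^{(i)}p+k_0^{(j)},\ldots,-k_r^{(i)}p+k_r^{(j)}}$. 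Assembling these into a matrix yields the stated formula, exactly as in the proof of Proposition \ref{prop:HW_general}.
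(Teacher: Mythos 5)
Your argument is correct and follows the same route as the paper: the commutative diagram built from Lemma B.2.3 (the Koszul-complex version of Lemma 3.1.2), the reduction of $H^1(C,\mathcal{O}_C)$ to $H^r(\mathbf{P}^r,\mathcal{O}(-\sum_j\deg f_j))$ via Serre vanishing (which is where the hypothesis $d_{j_1\cdots j_{r-2}}\le r$ is used, exactly as you observe), and the explicit computation of multiplication by $(f_1\cdots f_{r-1})^{p-1}$ on the standard monomial basis of top cohomology. The paper's own proof is compressed to a single line referring back to Proposition 3.1.4, and your write-up supplies precisely the long-exact-sequence chase that that reference leaves implicit.
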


%
%
%
%
%

\end{document}